\newtheorem{thm}{Theorem}[section]
\newtheorem{prop}[thm]{Proposition}
\newtheorem{lem}[thm]{Lemma}
\theoremstyle{definition}
\newtheorem{defn}[thm]{Definition}
\newtheorem{exmp}[thm]{Example}
\newtheorem{exmps}[thm]{Examples}
\theoremstyle{remark}
\newtheorem{rem}[thm]{Remark}
\let\c@equation\c@thm
\numberwithin{equation}{section}
\title{A combinatorial one-cocycle in a moduli space of knots from the Vassiliev invariant of order 3}
\author{Baptiste Gros}
\address{Département de Mathématiques, ENS de Lyon}
\email{baptiste.gros@ens-lyon.fr }
\author{Butian Zhang}
\address{Institut de Mathématiques de Toulouse, Université Paul Sabatier}
\email{butian.zhang@math.univ-toulouse.fr}
\subjclass[2000]{57M25}
\g@addto@macro{\endabstract}{\@setabstract}
\newcommand{\authorfootnotes}{\renewcommand\thefootnote{\@fnsymbol\c@footnote}}%
\begin{document}


\maketitle
\begin{abstract}

The theory of Gauss diagrams and Gauss diagram formulas provides convenient ways to compute knot invariants, such as coefficients of the HOMFLYPT polynomial. In \cite{4,5}, the author uses Gauss diagram formulas to find combinatorial 1-cocycles in the moduli space of knots in the solid torus. Evaluated on canonical loops, one can then obtain new, non trivial knot invariants. In those books, the author conjectures that a new formula, based on the Vassiliev invariant $v_3$ also gives a 1-cocycle. We prove that it is in fact true by using the same methods developed by the author in those books.

\noindent \textbf{Keywords.} Knot theory, Gauss diagram formula, Combinatorial one-cocycle
\end{abstract}

\tableofcontents
\section{Introduction}\label{s1}

The goal of this article is to prove a conjecture (Question 2.1 in \cite{5}) formulated by Thomas Fiedler, stating that a formula based on the Vassiliev invariant $v_3$ defines a 1-cocycle in a moduli space that will be specified later. We start by giving a brief explanation on Gauss diagrams and Gauss diagram formulas, and we give the Gauss diagram formula for $v_3$ in Section \ref{s2}. For more details on this topic, see \cite{2}. We then give a summary of some of the ideas developped by Thomas Fiedler in \cite{4,5}, namely we define the moduli space we work in, we explain which computations need to be made in order to that a formula defines a 1-cocycle, and we define the 1-cocycle that is the topic of this article in Section \ref{s3}. The next sections \ref{s4},\ref{s5},\ref{s6} and\ref{s7} contain the details of these equations. Finally, we give an example on how to compute this cocycle in Section \ref{s8}.

\section{Gauss diagram and Gauss diagram formula}\label{s2}
Given a knot diagram $D$, we can also represent this knot by using \textit{Gauss diagram}. If we choose a base point at the knot diagram $D$, we go along the orientation of this knot, we will finally come back to our base point. Essentially, we just go along a circle, which we always assume to possess an anti-clockwise orientation, starting at a base point and ending at the first time that we come back to the base point. However, on this circle, we meet some crossings. We use arrows to represent these crossing. The small arc containing the foot of an arrow represents the piece of arc in the neighbourhood of the corresponding crossing which is at the bottom. And the small arc containing the head of the arrow represents the piece of arc in the neighbourhood of the corresponding crossing which is on the top. And we label the sign of each crossing on the corresponding arrow.

\begin{exmp}
Let $D$ be a trefoil and $G$ be the Gauss diagram of the trefoil. They are displayed as the following: 
\begin{figure}[H] 
\begin{subfigure}{0.4\textwidth}
	\includegraphics[width=0.8\textwidth]{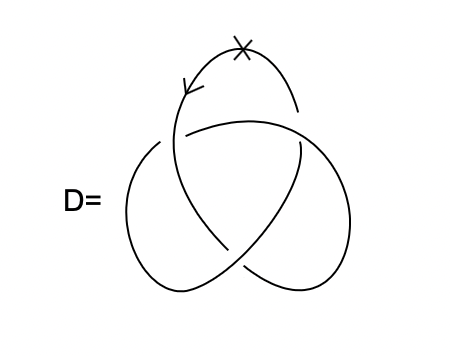} 
	\caption{trefoil} 
	\label{Fig.trefoil_l} 
\end{subfigure}
\begin{subfigure}{0.4\textwidth}
	\includegraphics[width=0.8\textwidth]{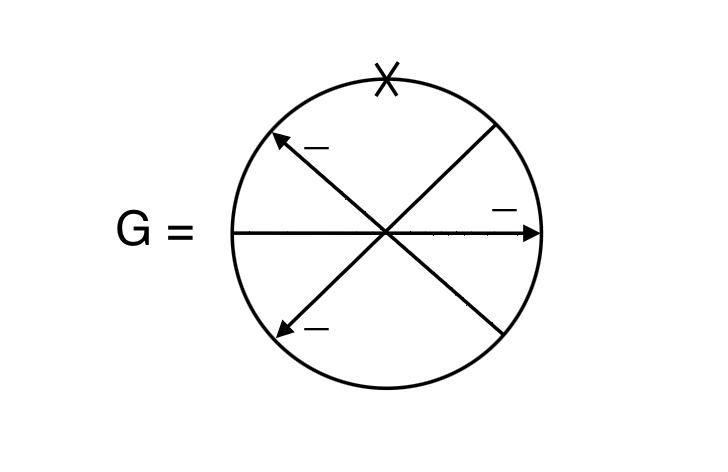} 
	\caption{Gauss diagram of the trefoil}
	\label{Fig.Gauss_trefoil} 
\end{subfigure}
\caption{trefoil and its Gauss diagram}
\label{Fig.example_Guass}
\end{figure}
\end{exmp}

\begin{rem}
Different authors may use different directions for the arrows in Gauss diagrams. For example, the direction we adopt here is consistent with that in \cite{3,4,5} contrast to that in \cite{1,2}.
\end{rem}

The Reidemeister moves for knot diagrams can also be demonstrated in their Gauss diagrams.

\begin{figure}[H] 
\centering 
\begin{subfigure}{0.6\textwidth}
	\includegraphics[width=\textwidth]{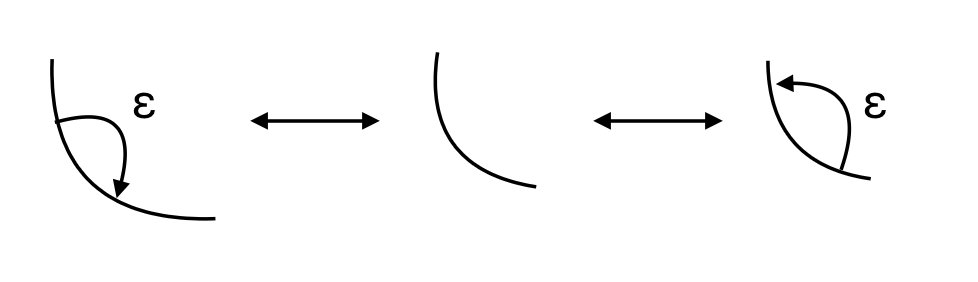} 
	\caption{Reidemeister move \uppercase\expandafter{\romannumeral1}} 
	\label{Fig.R1} 
\end{subfigure}
\begin{subfigure}{0.6\textwidth}
	\includegraphics[width=\textwidth]{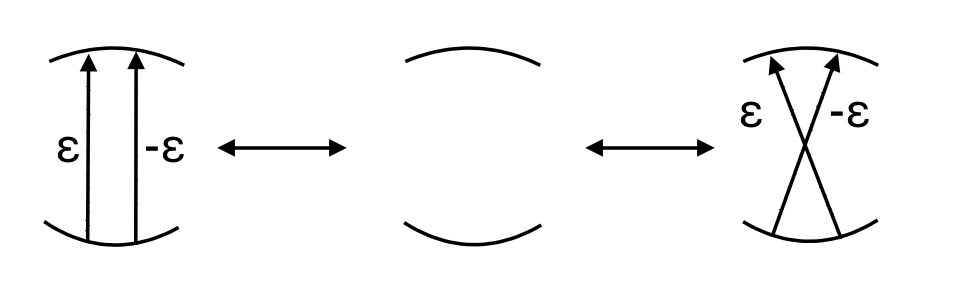} 
	\caption{Reidemeister move \uppercase\expandafter{\romannumeral2}}
	\label{Fig.R2} 
\end{subfigure}
\begin{subfigure}{0.6\textwidth}
	\includegraphics[width=\textwidth]{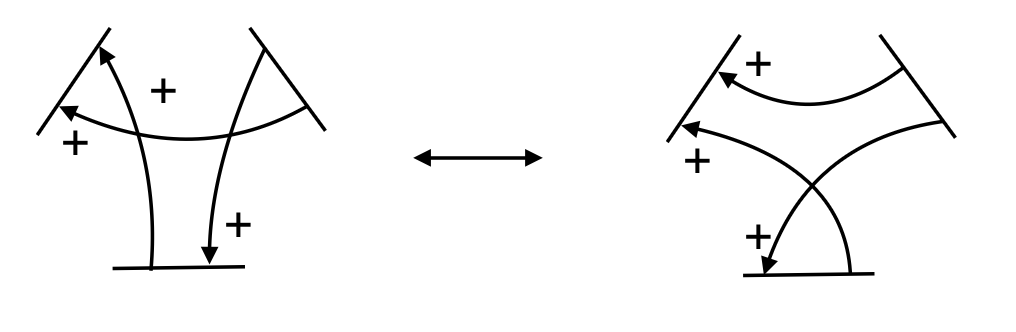} 
	\caption{Reidemeister move \uppercase\expandafter{\romannumeral3}}
	\label{Fig.R3} 
\end{subfigure}
\begin{subfigure}{0.6\textwidth}
	\includegraphics[width=\textwidth]{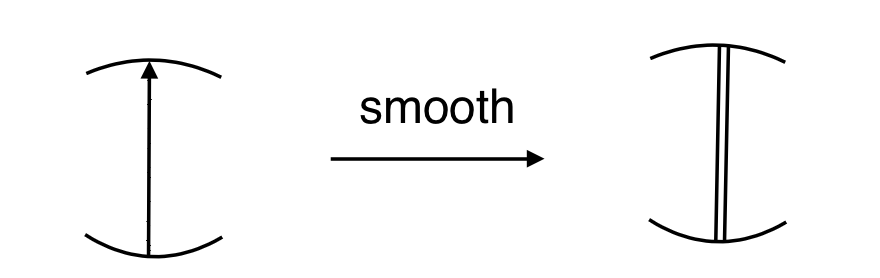} 
	\caption{smooth a crossing}
	\label{Fig.smooth_Gauss}
\end{subfigure}
\begin{subfigure}{0.6\textwidth}
	\includegraphics[width=\textwidth]{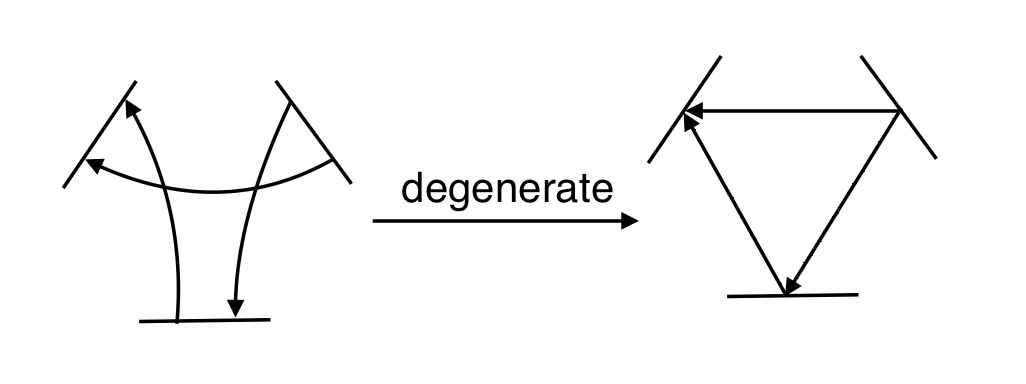} 
	\caption{degenerated Reidemeister move \uppercase\expandafter{\romannumeral3}}
	\label{Fig.degenerated_Gauss}
\end{subfigure}
\caption{Operations on Gauss diagram}
\label{Fig.Reidemeister_Gauss}
\end{figure}

\begin{rem}
\begin{enumerate}
	\item Reidemeister move \uppercase\expandafter{\romannumeral3} may have different kinds of shapes. For more details, see Section \ref{s5}.
	\item The arcs above just indicate the locally direction as if they are in one circle with an anti-clockwise direction. However, it may happen that these arcs come from different circles. And in the figure \ref{Fig.R3} the three pieces of arcs may not pass globally anti-clockwise as if they were a part of one circle. 
\end{enumerate}
\end{rem}

More generally, we can define a \textit{(signed) arrow diagram} which is an ordered sequence of circles with signed arrows from a point on one circle to another point on one circle. For an arrow diagram with one circle, it cannot always be realised by a knot. For example, the following arrow diagram is such a one. 
\begin{figure}[H]
\center
\includegraphics[width=0.15\textwidth]{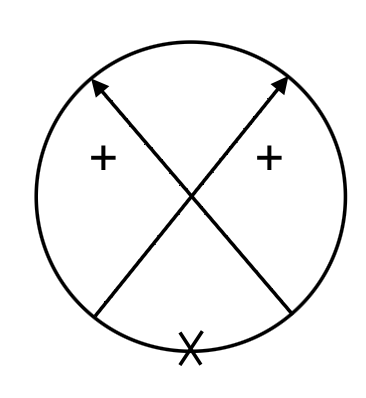}
\caption{an arrow diagram which cannot be realised by a knot}
\label{Fig.arrow_diagram}
\end{figure}

\begin{defn}
Two arrow diagrams with $m$ circles are the \textit{equivalent} if there is a 1-1 correspondence between the arrows in each of them such that the sequence of the head and foot of each arrows in one arrow diagram coincides with that sequence under the correspondence in the other arrow diagram. 

For two arrow diagrams, $A$ and $B$, with  $m$ circles, we say that $B$ is a \textit{subdiagram} of $A$ if after deleting some arrow(s) in $A$ we get an equivalence of $B$. In this case, we denote it by $B\subset A$.
\end{defn}

Let $\mathscr{A}$ be the free abelian group generated all the (equivalence classes of) arrow diagrams with $m$ circles. For two arrow diagram with $m$ circles, $A$ and $B$, we define their \textit{scalar-product} by 
\begin{equation}
 (A,B) = \left\{
\begin{array}{rcl}
1 && {\text{$A$ and $B$ are equivalent}}\\
0 && {\text{otherwise}}
\end{array}
\right.
\end{equation}
And we extend this scalar product to the whole $\mathscr{A}$ by linearity. 
We define an endomorphism $I: \mathscr{A} \rightarrow \mathscr{A}$ by $I(A) = \sum\limits_{ C \subset A}C$. This notation means that we sum on all subsets of arrows of $A$, and not just on every diagram $C$ that happens to verify $C \subset A$.

Given a fixed arrow diagram $A$ of $m$ circles, we define $$\langle A,G\rangle = (A,I(G))$$ for any arrow diagram $G$ with $m$ circles. In words, this product counts the number of times $A$ appears in $G$.

We can also view such a product in another perspective (see \cite{1}). 

\begin{defn}
For two signed arrow diagrams with $m$ circles $A$ and $G$, a \textit{homomorphism} $\phi : A\rightarrow G$ is a homeomorphism from the $m$ circles of $A$ to the $m$ circles of G which respects the orientations, and sends the arrows in $A$ to arrows in $G$, preserving their directions and signs. The set of homomorphisms from $A$ to $G$ is denoted by $Hom(A,G)$. The set of arrows in $G$ which are images of arrows in $A$ under $\phi$ is denoted by $S(\phi)$. And we denote $sign(\phi)=1$.
\end{defn}

With such a definition, we can interpret our product as:
\begin{equation}
\begin{aligned}
\langle A,G\rangle &= \sum\limits_{\phi \in Hom(A,G)}1\\
&= \#Hom(A,G)\\
&= \sum\limits_{\phi \in Hom(A,G)}sign(\phi).
\end{aligned}
\end{equation}

It is often convenient to consider \textit{unsigned arrow diagrams} which are simply the linear combination of all the corresponding signed arrow diagrams with coefficient being the product of the all the signs in each arrow diagram. (Hence if there are $n$ arrrows, there will be $2^n$ terms.)
See examples in figure \ref{Fig.C2_formula}.

\begin{defn}
For an unsigned arrow diagram $A$ with $m$ circles and a signed arrow diagram $G$ with $m$ circles, a \textit{homomorphism} $\phi : A\rightarrow G$ is a homeomorphism from the $m$ circles of $A$ to the $m$ circles of G which respects the order sends the arrows in $A$ to arrows in $G$ preserving their directions. The set of homomorphisms from $A$ to $G$ is also denoted by $Hom(A,G)$. The set of arrows in $G$ which are images of arrows in $A$ under $\phi$ is also denoted by $S(\phi)$. The \textit{sign} of the homomorphism $\phi$ denoted by $sign(\phi)$ is defined by the product of all signs or arrows in $S(\phi)$. 
\end{defn}

In this case, we can see that 
\begin{equation}
\begin{aligned}
\langle A,G\rangle &= \sum\limits_{\phi \in Hom(A,G)}sign(\phi)
\end{aligned}
\end{equation}

\begin{defn}
A \textit{Guass diagram formula} is a map $\mathcal{I}_A: \mathscr{A} \rightarrow \mathbb{Z}$ defined by $$\mathcal{I}_A(G) = \langle A,G\rangle,$$
where $A$ is a fixed element in $\mathscr{A}$. 
\end{defn}
\begin{rem}
Sometimes, we just call $A$ a Gauss diagram formula and each signed or unsigned arrow diagram appearing in $A$ the \textit{configurations}. 
\end{rem}

\begin{exmps}
Let m = 1.
\begin{enumerate}
	\item The coefficient $p_{0,2}$ of HOMFLYPT polynomial is given by $p_{0,2}=\mathcal{I}_{A_{0,2}}$ (see \cite{3}), where $A_{0,2}$ equals to : 
		\begin{figure}[H]
		\center
		\includegraphics[width=0.7\textwidth]{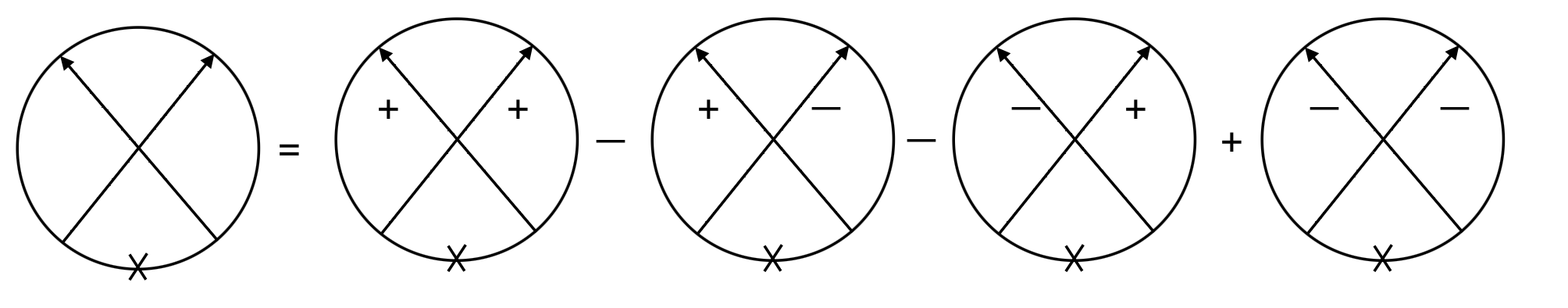}
		\caption{arrow diagrams giving $p_2$}
		\label{Fig.C2_formula}
		\end{figure}
	\item The coefficient $p_{1,2}= -2v_3$ of HOMFLYPT polynomial is given by $v_3=\mathcal{I}_A$(see \cite{2}), where $A$ equals to :
		\begin{figure}[H]
		\center
		\includegraphics[width=0.7\textwidth]{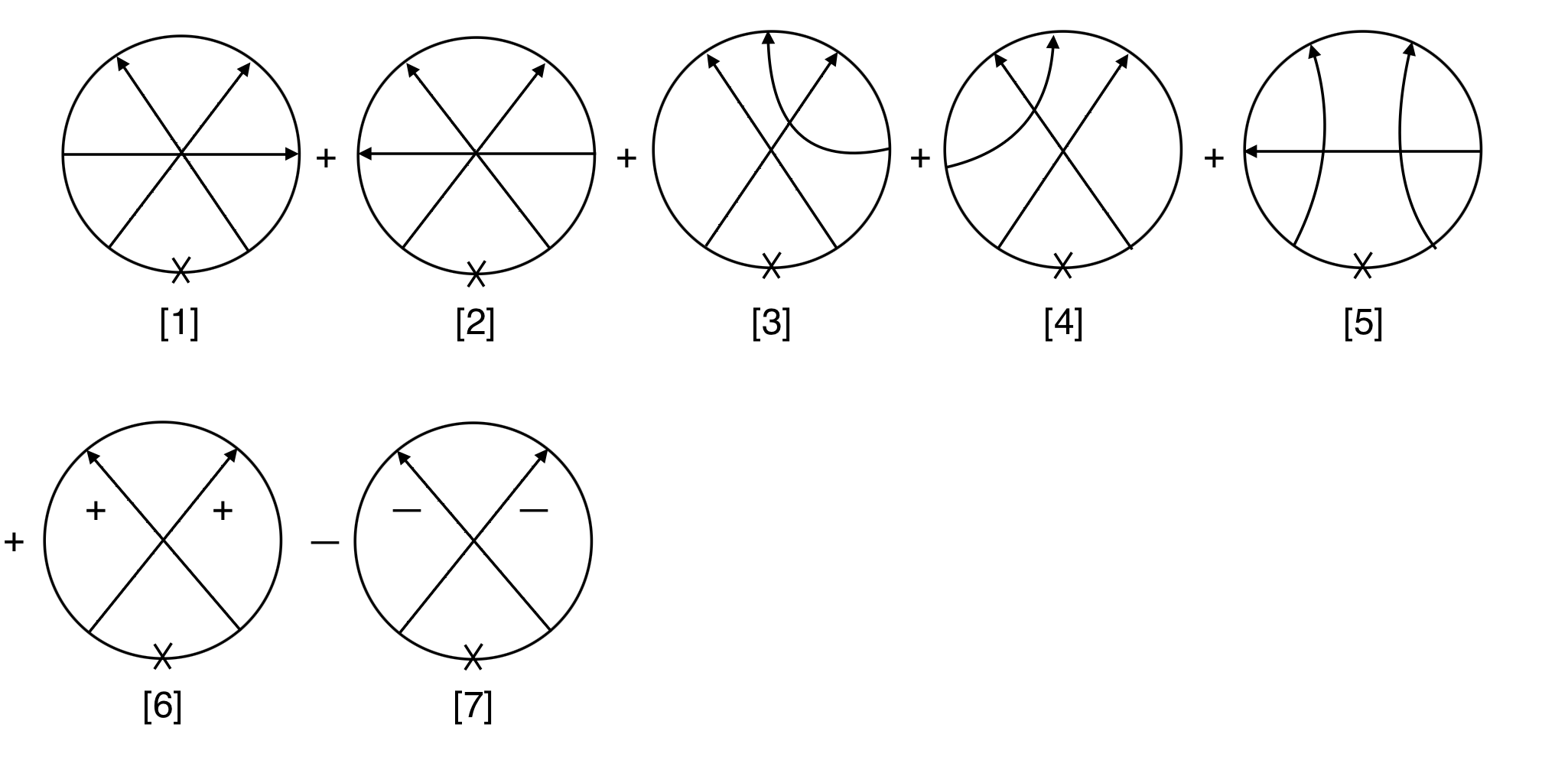}
		\caption{arrow diagrams giving $v_3$}
		\label{Fig.v3_formula}
		\end{figure}
\end{enumerate}
\end{exmps}

\begin{rem}
The Gauss diagram formula above for $v_3$ is the principal formula that we use in this paper. Our one-cocycle is derived from it. 
\end{rem}

\begin{prop}
For each of two pairs of arrow diagrams $A$ and $B$ below, we have $\mathcal{I}_A(G) = \mathcal{I}_{B}(G)$ for any Gauss diagram $G$ with one circle.
\begin{enumerate}
	\item\label{id5} \ 
		\begin{figure}[H]
		\begin{subfigure}{0.3\textwidth}
			\includegraphics[width=0.6\textwidth]{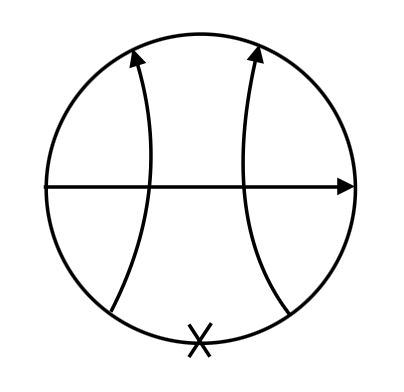}
			\caption{}
			\label{Fig.id5_l}
		\end{subfigure}
		\begin{subfigure}{0.3\textwidth}
			\includegraphics[width=0.65\textwidth]{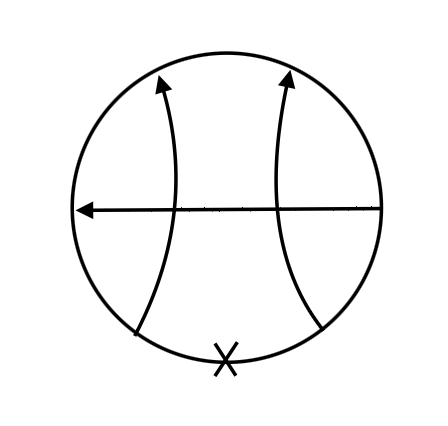}
			\caption{}
			\label{Fig.id5_r}
		\end{subfigure}
		\end{figure}
	\item\label{id1} \ 
		\begin{figure}[H]
		\begin{subfigure}{0.3\textwidth}
			\includegraphics[width=0.6\textwidth]{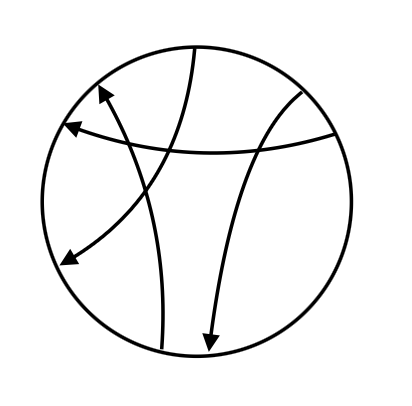}
			\caption{}
			\label{Fig.id1_l}
		\end{subfigure}
		\begin{subfigure}{0.3\textwidth}
			\includegraphics[width=0.63\textwidth]{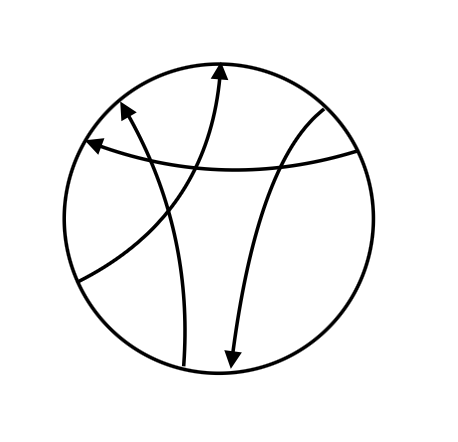}
			\caption{}
			\label{Fig.id1_r}
		\end{subfigure}
		\end{figure}
\end{enumerate}
\end{prop}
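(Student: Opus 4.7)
The plan is, for each of the two identities, to prove equality of the integer-valued maps $\mathcal{I}_A$ and $\mathcal{I}_B$ by setting up a signed bijection between $\mathrm{Hom}(A,G)$ and $\mathrm{Hom}(B,G)$ for every Gauss diagram $G$ with one circle. First I would expand each unsigned arrow diagram into its $2^n$ signed components, where $n$ is the number of arrows, and rewrite
\[
\mathcal{I}_A(G)\;=\;\sum_{\phi\in\mathrm{Hom}(A,G)}\mathrm{sign}(\phi),\qquad \mathcal{I}_B(G)\;=\;\sum_{\psi\in\mathrm{Hom}(B,G)}\mathrm{sign}(\psi),
\]
reducing everything to counting sub-configurations of $G$ with prescribed combinatorial type, weighted by the product of the signs of the arrows in $S(\phi)$.

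Next I would identify the \emph{local} difference between $A$ and $B$ in each identity: typically two such configurations differ only in the relative cyclic order of one pair of endpoints on the circle, or in the head/foot assignment of one arrow. Once that local move is isolated, I would construct the matching $\phi \mapsto \psi$ by performing the same endpoint exchange inside $G$: the images of the unaffected arrows under $\phi$ are reused, and the affected endpoint is pushed to the unique position that produces a configuration of type $B$. The key verification is that this really lands in $\mathrm{Hom}(B,G)$, that it is an involution (or a true bijection), and that $\mathrm{sign}(\phi)=\mathrm{sign}(\psi)$; the sign equality should be automatic because the involved arrows are the same, only their cyclic placement has changed.

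The essential input that makes the bijection well-defined is the \emph{realizability} of $G$ as a one-circle Gauss diagram: for any two endpoints on the single oriented circle, every other arrow has a well-defined position relative to them, and the standard parity/linking constraints on realizable Gauss diagrams (cf.\ \cite{2}) force precisely the pairings needed. I would use these constraints to partition $\mathrm{Hom}(A,G)\sqcup\mathrm{Hom}(B,G)$ into matched pairs together with fixed points whose contributions cancel in the difference $\mathcal{I}_A(G)-\mathcal{I}_B(G)$.

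The step I expect to be the main obstacle is the case analysis: even a small configuration with three arrows gives several inequivalent cyclic orders of endpoints on the circle, and for each order one must check that the bijection is well-defined and sign-preserving. In particular, the degenerate sub-cases, where two endpoints considered by the move happen to coincide with endpoints of a third arrow, require separate treatment and are typically the source of hidden terms. My plan is to enumerate these boundary cases explicitly, verify that each boundary contribution on the $A$-side is matched by an equal contribution on the $B$-side (using only that $G$ has a single circle), and then read off the identity $\mathcal{I}_A(G)=\mathcal{I}_B(G)$.
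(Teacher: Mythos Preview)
Your plan has a real gap: you assume that $\mathcal{I}_A(G)=\mathcal{I}_B(G)$ can be witnessed by a sign-preserving bijection $Hom(A,G)\to Hom(B,G)$, but there is no reason such a term-by-term matching should exist, and in fact the identity here is an equality of \emph{sums}, not of individual contributions. The paper's argument makes this transparent. For identity~(\ref{id5}) one first locates in $G$ the two arrows that $A$ and $B$ have in common, and then \emph{smooths} them (in the sense of Figure~\ref{Fig.smooth_Gauss}); because $G$ is the Gauss diagram of an actual knot, this splits the single circle into two circles carrying a genuine two-component link diagram. The remaining free arrow in $A$ (respectively in $B$) then ranges over inter-component arrows of one orientation (respectively the other), and the signed count of such arrows is precisely the linking number of the two components. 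Since the linking number can be computed either from the crossings where component~1 lies over component~2 or from those where component~2 lies over component~1, the two signed counts agree, giving $\mathcal{I}_A(G)=\mathcal{I}_B(G)$. Identity~(\ref{id1}) is handled the same way after smoothing the three Reidemeister-III arrows (Figure~\ref{Fig.prf_id1}).

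So the realizability of $G$ enters not through ``parity/linking constraints'' on cyclic endpoint orders, as you suggest, but through the geometric fact that smoothing a knot diagram yields an honest link with a well-defined, symmetric linking number. Your endpoint-exchange scheme, by contrast, tries to pair individual $\phi$'s with individual $\psi$'s; for a fixed choice of the common arrows there may be, say, three positive and one negative $A$-type completions versus two positive $B$-type completions---equal signed sums, but no bijection. The case analysis you anticipate would therefore not close up into a matching, and the ``hidden terms'' you worry about are exactly this obstruction. The fix is not more cases but the smoothing-to-linking-number reduction.
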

\begin{proof}
For case \ref{id5}, we first try to find the two vertical arrows in $G$. To find the third horizontal arrow, we first smooth the two vertical arrows  in $A$, $B$, and $G$. Then the third horizontal arrow just counts the linking number of the two separate circles (see figure \ref{Fig.prf_id5}). $A$ and $B$ correspond to the two different manners of counting it. Hence $\mathcal{I}_A(G) = \mathcal{I}_{B}(G)$. 
\begin{figure}[H]
\center
\includegraphics[width=0.5\textwidth]{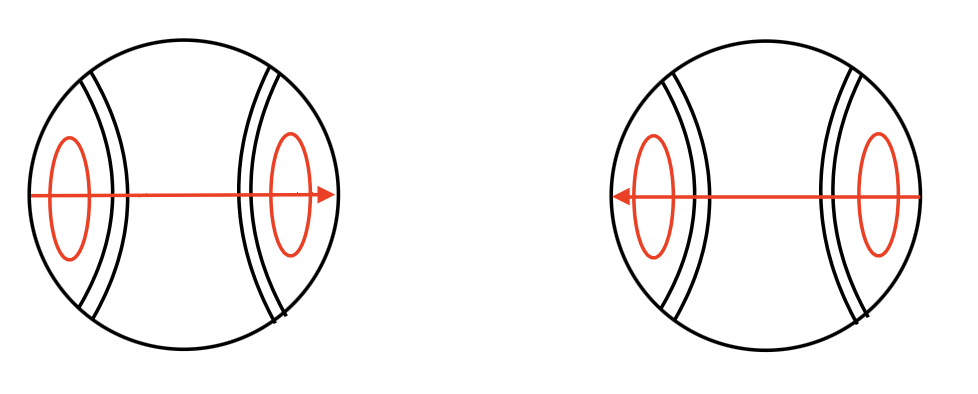}
\caption{linking number of the two circles}
\label{Fig.prf_id5}
\end{figure}
Case \ref{id1} is similar. We first find the three arrows representing Reidemeister move \uppercase\expandafter{\romannumeral3}. Then we smooth all of them. 
\begin{figure}[H]
\center
\includegraphics[width=0.5\textwidth]{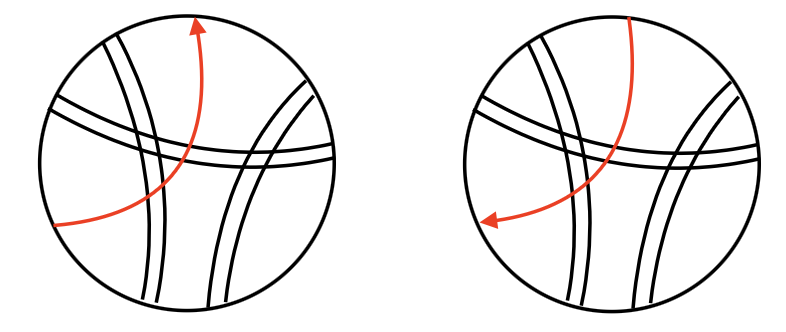}
\caption{linking number of the two circles}
\label{Fig.prf_id1}
\end{figure}
\end{proof}

\section{One-cocycle}\label{s3}

We will now briefly remind the methods used in \cite{4} and \cite{5}, which contain more precise explanations . The goal is to study the topology of a moduli space of knots, in order to construct 1-cocycles, and evaluate them in some loops, the result being a knot invariant. 

In order to define the moduli space that will be used, we need some definitions. First of all, we will consider knots in the solid torus $V$, embedded in $\mathbb{R}^3$. The set of all knots in the solid torus is endowed with the Whitney topology. We fix a projection $\mbox{pr} : \mathbb{R}^3 \to \mathbb{R}^2$ which sends the solid torus onto an annulus. The solid torus verifies $H_1(V,\mathbb{Z})  \simeq \mathbb{Z}  $ and we fix such an isomorphism so that counter-clockwise loops in the plane are positive. Then we restrain the moduli space to knots which homology class is $n$, and for which the projection into the annulus is an immersion. 

A crossing $q$ of a knot diagram of a knot in the plane can be smoothed as usual, and we note $q^+$ the knot in the smoothed diagram visiting $q$ from its lower strand to its higher strand. We also note $q^-$ the knot visiting $q$ from its higher to its lower strand. And we say that a knot has no \textit{negative loops} if after smoothing some crossings all the loops in the diagram have non-negative homology. We further restrict the moduli space to those knots that have no negative loops.

Finally, we fix a disc $D^1 \times \{0\} \subset D^1 \subset S^1$ in the solid torus, which we call \textit{disc at infinity}.  Any knot in the moduli space intersects this disc $n$ times, and the lowest intersection with respect to $\mbox{pr}$ is called the \textit{point at infinity}, noted $\infty$. We say that a knot verifies the \textit{separation condition} if a crossing can not move over the point $\infty$ at the same moment as a Reidemeister III move happens somewhere in the diagram. We lastly restrict the moduli space to the knots which verify the separation condition. This moduli space is noted $M_n$.\\

In this space, for any knot $K$, there are canonical loops based in $K$, (namely Gramain's loop, Fox-Hatcher's loop, and the "push" loop), so that an isotopy of knots gives an homotopy of the corresponding loops. Then composing a 1-cocycle with one of these canonical loops gives a knot invariant. For now on we focus on defining one candidate for a 1-cocycle, which will simply be noted $R$ later, and proving that it is indeed a 1-cocycle, which answers a conjecture in \cite{5} (Question 2.1). We use the same methods as in this book.

The goal is to show that if $\gamma$ and $\gamma '$ are two loops generical homotopic in $M_n$, $R(\gamma) = R(\gamma')$. An homotopy is generic if it does not intersect strata of codimension greater than 2, intersect strata of codimension 2 transversally
and it is tangential to strata of codimension 1 only in ordinary tangencies in the moduli space. In our case, one of the form :

\begin{enumerate}
\item\label{singularity_1} \includegraphics[height=2cm]{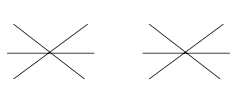} Two Reidemeister moves happening at the same time. A singularity of this form will be noted $\Sigma^1 \cap \Sigma^1$.
\item\label{singularity_2} \includegraphics[height=2cm]{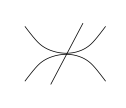} An ordinary tangency with a branch passing transversely through the double point. It is noted $\Sigma^2_{\mbox{trans-self}}$.
\item\label{singularity_3} \includegraphics[height=2cm]{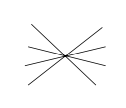}An ordinary quadruple point, meaning a quadruple a quadruple point where all intersections are transverse; it is noted $\Sigma^2_{\mbox{quad}}$.
\item\label{singularity_4} \includegraphics[height=1.5cm]{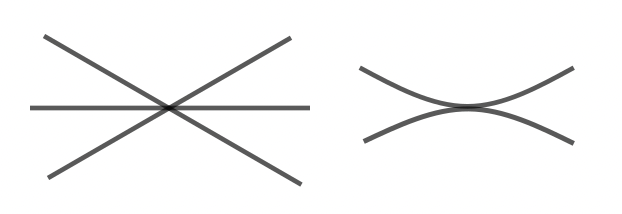} A Reidemeister move II and a Reidemeister move III at the same time.
\end{enumerate}

In general, the projection $\mbox{pr}$ gives a stratification of the moduli space, and the points where only one of those singularity happens forms a stratum of codimension $2$.  Similarly, a knot for which the diagram has exactly one singularity happening from a Reidemester point (that is, an ordinary tangency or a double point, since cusps are forbidden here) lies in a strata of codimension $1$.

The heart of the proof consists in fixing a small loop around one of these strata of codimension $2$ in the moduli space, and checking that it is sent to 0 by the potential 1-cocycle. Doing so is called solving the \textit{commutation equations}, the \textit{cube equations} and the \textit{tetrahedron equations}, respectively. We can now give the formula for $R$, after which we will solve those equations in the next sections.\\

The function $R$ will be defined by counting the number of times a generic loop $\gamma$ encounters a knot with a triple point, or equivalently the number of times it crosses a strata of codimension $1$. Each of these occurrences will be counted with a weight $W$ depending on the whole diagram at that moment. However we will only count the times the triple point satisfies certain properties, that are defined in \cite{4} and \cite{5}, and reminded below.

If a knot contains a triple point, in the corresponding Gauss diagram there are three arrows that form a triangle, which is of one of the two forms below.

\begin{figure}[H]
\centering
\includegraphics[width=6cm]{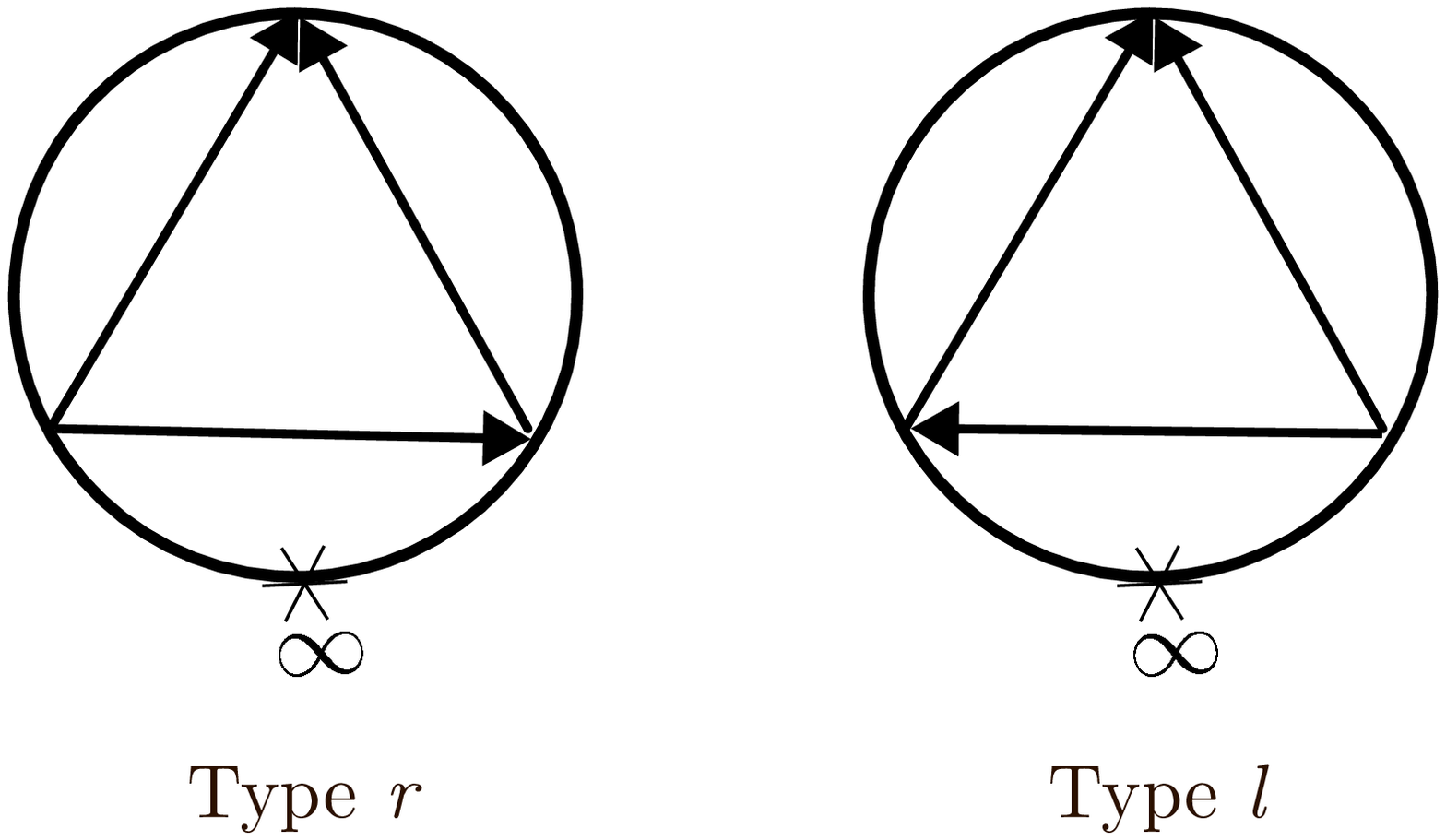}
\caption{Global type of a Reidemeister III move.}
\label{fig.global_type}
\end{figure}

We then say that the triple point is of \textit{global type r} or \textit{l}, respectively. We can also distinguish different types of triple points based on homology: for $q$ a crossing, we note $[q] \in \mathbb{Z}$ the homology class of $q^+$. This is called the \textit{homological marking} of the crossing. The crossings with homological marking $0$ or $n$ are called \textit{persistent crossings}. For $p$ a triple point, we also note $d$ (for "distinguished") the crossing between the lowest and the highest branch, and $hm$, respectively $ml$, the crossing between the middle branch and the highest, respectively the lowest. Then a triple point is said to be of type $r([d], [hm], [ml])$ or $l([d], [hm], [ml])$. The cocycle $R$ will depend on an integer parameter $a$ between $1$ and $n-1$, and will only count  the times $\gamma$ crosses a strata correspond to a Reidemeister move RIII of type $r(a,n,a)$. \\

Naturally, $R$ needs to take into account the direction in which a path crosses a strata of codimension $1$, more precisely two crossings in opposite directions must differ by a change of sign. The figure below sums up the coorientation of the strata of codimension $1$ corresponding to Reidemeister moves RIII. 
When $\gamma$ crosses such a strata transversely, we note $p$ the corresponding triple point, and we define $\mbox{sign}(p) = 1$ if it is done following the coorientation, and $\mbox{sign}(p)  = -1$ if it is done in the opposite direction.\\

\begin{figure}[H]
\centering
\includegraphics[width=10cm]{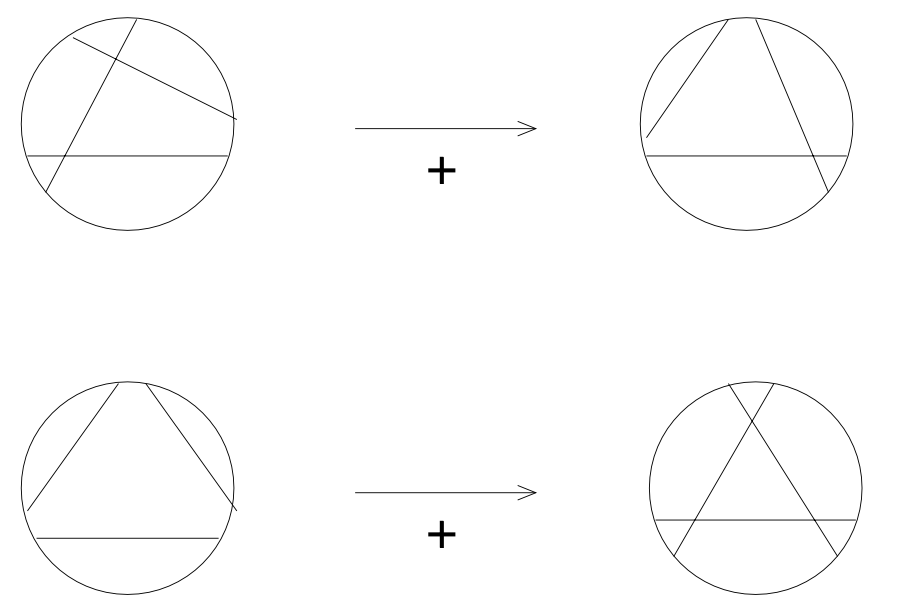}
\caption{sign of a Reidemeister III move.}
\label{fig.global_type}
\end{figure}

Finally, we can define the weight $W$ associated to a diagram with a triple point.
III. In \cite{5}, the weights were defined by slight modifications of Gauss diagram formulas, for example those of the coefficients of the Conway-Alexander polynomial. In our case, the weights come from the Gauss formula for $v_3$. We remind the formula in question :

\begin{figure}[H]
	\center
	\includegraphics[width=0.8\textwidth]{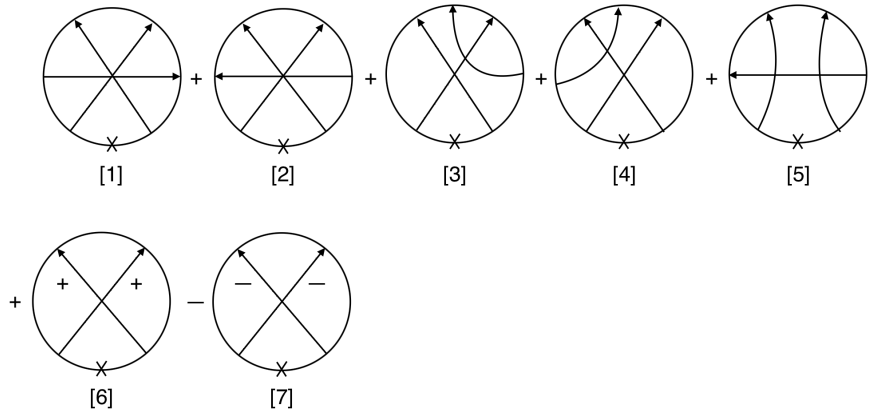}
	\caption{Arrow diagrams giving $v_3$.}
\end{figure}


Precisely, for a Reidemeister move III $p=r(a,n,a)$ in a Gauss diagram $G\in \widetilde{M_n}$, the \textit{weight} of $p$ is defined by 
$$W(p) = \sum\limits_{\phi \in \{ \phi \in \cup_{i} Hom(A_i,G_p) | hm\in S(\phi)\text{is the first image arrow from } \infty \} }c_{i(\phi)} sign(\phi),$$
where $\sum_{i}c_iA_i=A$ is the Gauss diagram formula for $v_3$ shown in figure \ref{Fig.v3_formula} ($c_i = \pm 1$ being the coefficients in $v_3$), $\widetilde{M_n}$ is the underlying space $M_n$ in which Reidemeister move I is also allowed and $G_p$ is the remaining of the Gauss diagram $G$ deleting the non-persistent crossings. Notice that $G_p$ is the Guass diagram of the lifting knot of $G$ under the n-folded covering projection of the torus.

That is to say, $v_3(G)$ counts the subdiagrams of $G$ that are of one of the forms below, and each subdiagram is counted either positively or negatively depending on the product sign of the arrows. We will evaluate the weights on "diagrams" where we allow three arrows to form a triangle, but we still call those "diagrams". In the computations, the role of the base point will be played by the point $\infty$. 
Then, for $G$ the Gauss diagram of a knot with a triple point $p$, the subdiagrams that will contribute to $W$ are those that also contribute in the formula for $v_3$ (meaning those of the form of one of the diagrams above), \uline{but such that the first arrow encountered after the base point is $hm$, and such that all arrows have homological marking $0$ or $n$}. 
Of course each subdiagram is counted positively if the product of the signs of the arrows is $+1$, and counted negatively otherwise. \\

Then, the cocycle $R$ is defined as

$$
R (\gamma) = \sum_{p = (r(a,n,a), \infty \in d^+) \in \gamma} \mbox{sign}(p) W(p) w(hm_p).
$$

Here, the sum is taken over all the times at which $\gamma$ contains a triple point $p$, that is of type $r(a,n,a)$, and such that the point $\infty$ is in the arc $d^+$. The function $w$ denotes the writhe. We used the notations $R$ and $W$ for simplicity, in \cite{5} those are noted $R^3_{a,d^+,hm}$ and $W_2(hm)$.

\begin{thm}
For any integer $a \in (0,n)$, $R$ is a well defined 1-cocycle in $\widetilde{M_n}$
\end{thm}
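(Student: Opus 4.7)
The plan is to follow the strategy from Fiedler's books: since $\widetilde{M_n}$ is a stratified space whose codimension-$1$ strata correspond to Reidemeister moves, showing that $R$ descends to a $1$-cocycle amounts to verifying that $R$ evaluates to zero on every small meridional loop around each codimension-$2$ stratum. These strata were enumerated in Section \ref{s3} as the four local models \ref{singularity_1}--\ref{singularity_4}, and each yields a family of equations---the commutation, cube, and tetrahedron equations---whose verification will occupy Sections \ref{s4}--\ref{s7}.

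First I would dispose of the commutation equations arising from type \ref{singularity_1}. Around a stratum $\Sigma^1 \cap \Sigma^1$, the meridional loop performs two independent Reidemeister moves, possibly one of them an RIII of type $r(a,n,a)$ contributing to $R$. Since the second move is supported in a region combinatorially disjoint from the arrows of the $v_3$-subdiagrams whose $hm$-arrow sits at the distinguished triple point, a case-by-case inspection shows that the weight $W(p)$, the sign $\mathrm{sign}(p)$, and the writhe factor $w(hm_p)$ are unchanged by the order in which the two moves occur. The two traversals of the loop thus produce equal contributions with opposite orientations. The trans-self tangency \ref{singularity_2} and the combined RII/RIII stratum \ref{singularity_4} follow by similar but more delicate local analyses: in the former one checks that the pair of cancelling RIII moves produced by the tangency come with opposite signs but identical weights, and in the latter one verifies that the bigon created by the RII never introduces a new $v_3$-subdiagram with the required homological markings $0$ or $n$, so the tetrahedron equation essentially reduces to a pair of commutations.

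The core of the proof lies in the cube equation coming from the quadruple point stratum \ref{singularity_3}. A small loop around an ordinary quadruple point meets six codimension-$1$ strata corresponding to the six triple points one obtains by selecting three of the four strands, and the cube equation requires showing that the signed sum of the contributions from those triple points of global type $r(a,n,a)$ vanishes. I would proceed by a case analysis on: the global type ($r$ or $l$) of each of the six triangles, the homological markings of the four arrows meeting at the quadruple point (which must lie in $\{0,n\}$ for a non-trivial contribution), and the position of $\infty$ relative to the four strands. In each configuration I would exploit the identities of Proposition \ref{id5}, \ref{id1}, which allow one to reindex the $v_3$-subdiagrams so that contributions associated to different faces of the cube pair up and cancel against each other after taking into account the sign conventions of Figure \ref{fig.global_type}.

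The hard part will be the cube equation: unlike the commutation and tetrahedron cases, which are essentially local and rely on the weight being supported away from the move, the cube equation requires genuine combinatorial identities among $v_3$-subdiagrams. I expect that the constraint ``$hm$ is the first image arrow encountered from $\infty$'', which was built into the definition of $W$, is precisely what forces the right cancellations; without it the cube sum would depend globally on $G$, while with it one reduces to comparing $v_3$-counts on pairs of configurations that differ by a single RIII move, controlled by Proposition \ref{id5}, \ref{id1}. Carrying out this reduction in every case, and bookkeeping the effect of the sign of each RIII move on the weights, is the principal computational burden of the subsequent four sections.
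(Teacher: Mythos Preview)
Your overall architecture is right, but you have swapped the names and, more importantly, misjudged where the work lies. In the paper's terminology the \emph{cube equations} are the ones attached to $\Sigma^2_{\text{trans-self}}$ (your type~\ref{singularity_2}), while the \emph{tetrahedron equations} are those attached to the quadruple point $\Sigma^2_{\text{quad}}$ (your type~\ref{singularity_3}). Around a quadruple point one meets $\binom{4}{3}=4$ RIII strata, not six; the meridian crosses each twice, giving the eight diagrams $P_1,\overline P_1,\dots,P_4,\overline P_4$ analysed in Section~\ref{s7}. The tool the paper uses there is not Proposition~\ref{id5},~\ref{id1} but the explicit bijection $\theta$ of Section~\ref{s4} (the ``contribution correspondence'' for $v_3$ under an RIII move), together with a case split over the six global types of quadruple points and the possible homological markings $\alpha,\beta,\gamma$.

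The more serious gap is your treatment of the trans-self stratum. You assert that the two RIII moves on the meridian ``come with opposite signs but identical weights''. That is only true when the arrow that slides across a triangle endpoint is $d$ or $ml$ (so has marking $a$ and cannot contribute to $W$). When the sliding arrow is $hm$, one has instead $\operatorname{sign}(p)w(hm)=\operatorname{sign}(p')w(hm')$ and the weights genuinely differ: three-arrow subdiagrams containing both $hm$ and $hm'$ contribute on one side only, and this surplus is exactly cancelled by the two-arrow configurations $[6],[7]$ in the $v_3$ formula. Without isolating this cancellation your cube-equation argument does not close. Finally, since the statement is about $\widetilde{M_n}$ rather than $M_n$, you also need the short check (the Remark after the theorem) that meridians around cusps contribute nothing, because in $G_p$ the arrow $hm$ coming from the RI crossing is isolated and hence appears in no $v_3$ configuration.
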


\begin{rem}
    Our one-cocycle is well-defined in $\widetilde{M_n}$ if it is well-defined in $M_n$. Indeed, when we consider about the meridian around a cusp. We only need to consider that the Reidemeister move \uppercase\expandafter{\romannumeral3} $p$ is of type $r(a,n,a)$. So for the resulting crossing of the Reidemeister move \uppercase\expandafter{\romannumeral1} $\beta$, $[\beta]=0$ or $n$. We only need to consider $[\beta]=n$ and $\beta = hm$. The involving two crossing from the Reidemeister move \uppercase\expandafter{\romannumeral2} are not persistent crossings. Therefore the $hm$ in the Gauss diagram $G_p$ is an isolated arrow which does not appear in any configurations of $v_3$.
    
    \begin{figure}[H]
        \begin{tikzpicture}
            \node[inner sep=0pt] at (-5,0) {\includegraphics[width = 0.25 \textwidth]{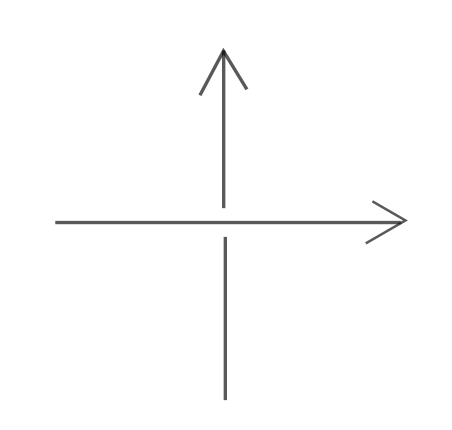}};

            \node[inner sep=0pt] at (0,0) {\includegraphics[width = 0.25 \textwidth]{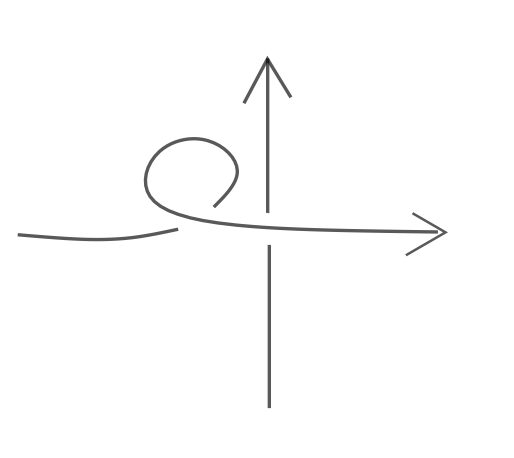}};

            \node[inner sep=0pt] at (5.6,0) {\includegraphics[width = 0.35 \textwidth]{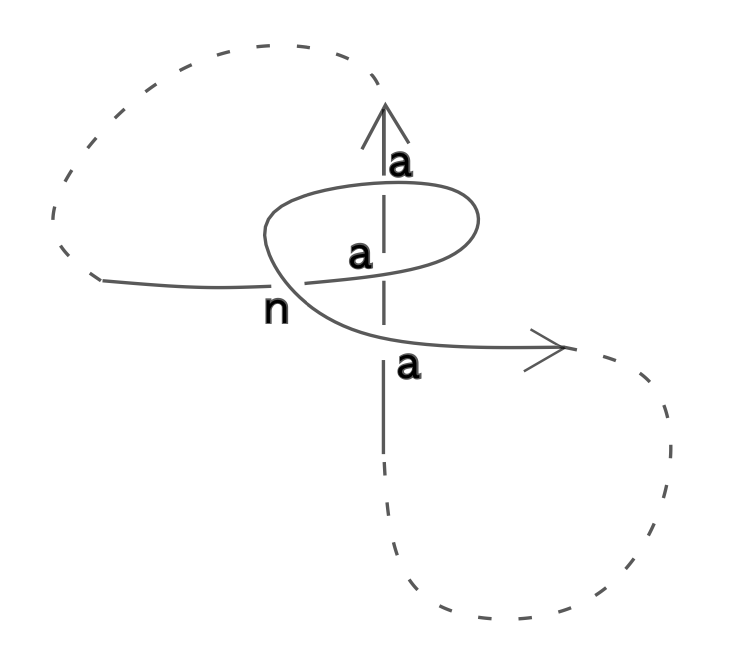}}; 
            
            \node[inner sep=0pt] at (3,-4) {\includegraphics[width = 0.35 \textwidth]{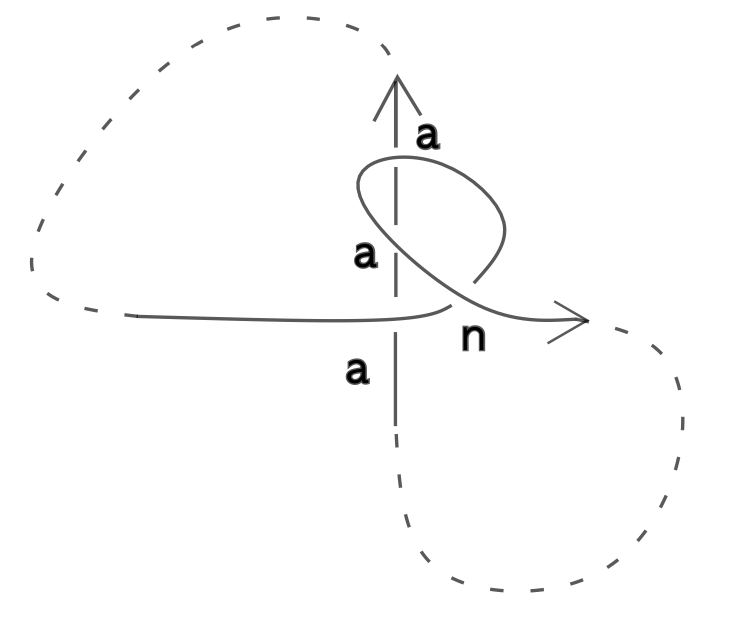}};

            \node[inner sep=0pt] at (-3,-4) {\includegraphics[width = 0.25 \textwidth]{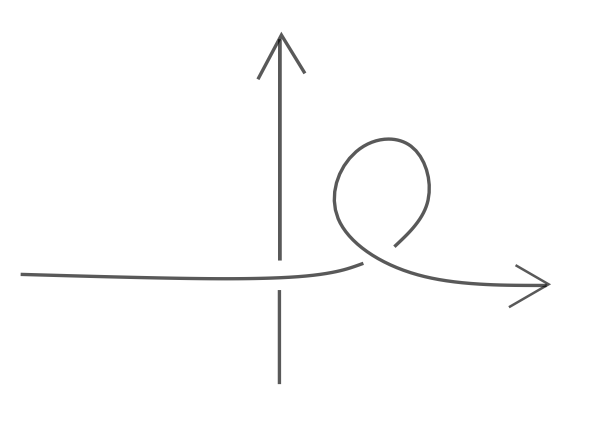}}; 
            
            \draw[-latex,very thick] (-3.5,-1) -- (-2,-1);
            \draw[-latex,very thick] (1.7,-1) -- (3.2,-1);
            \draw[-latex,very thick] (7,-2.5) -- (6,-4);
            \draw[-latex,very thick] (1,-5) -- (-1,-5);
            \draw[-latex,very thick] (-4,-3.7) -- (-5,-2.5);
            
        \end{tikzpicture}
    \label{Fig.cusp_meridian}
    \caption{meridian around a cusp}
    \end{figure}
    
    \begin{figure}[H]  
        \centering
	    \includegraphics[width=0.3\textwidth]{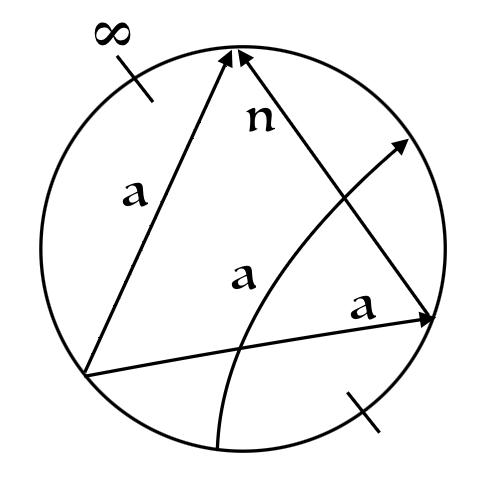} 
	    \caption{Guass diagram of the Reidemeister move \uppercase\expandafter{\romannumeral3} on the meridian}
	\label{Fig.cusp_Guass} 
    \end{figure}
\end{rem}
\section{Contribution correspondence under Reidemeister move \uppercase\expandafter{\romannumeral3} }\label{s4}
In \cite{1}, a contribution correspondence under Reidemeister move \uppercase\expandafter{\romannumeral3} with respect to $p_{0,2k}$ is given. Here we give the contribution correspondence with respect to $v_3$. For doing this, let us first indicate all types of Reidemeister move $\uppercase\expandafter{\romannumeral3}$.
Locally, there are 8 types of Reidemeister move $\uppercase\expandafter{\romannumeral3}$: 
\begin{figure}[H]
\center
\includegraphics[width=0.75\textwidth]{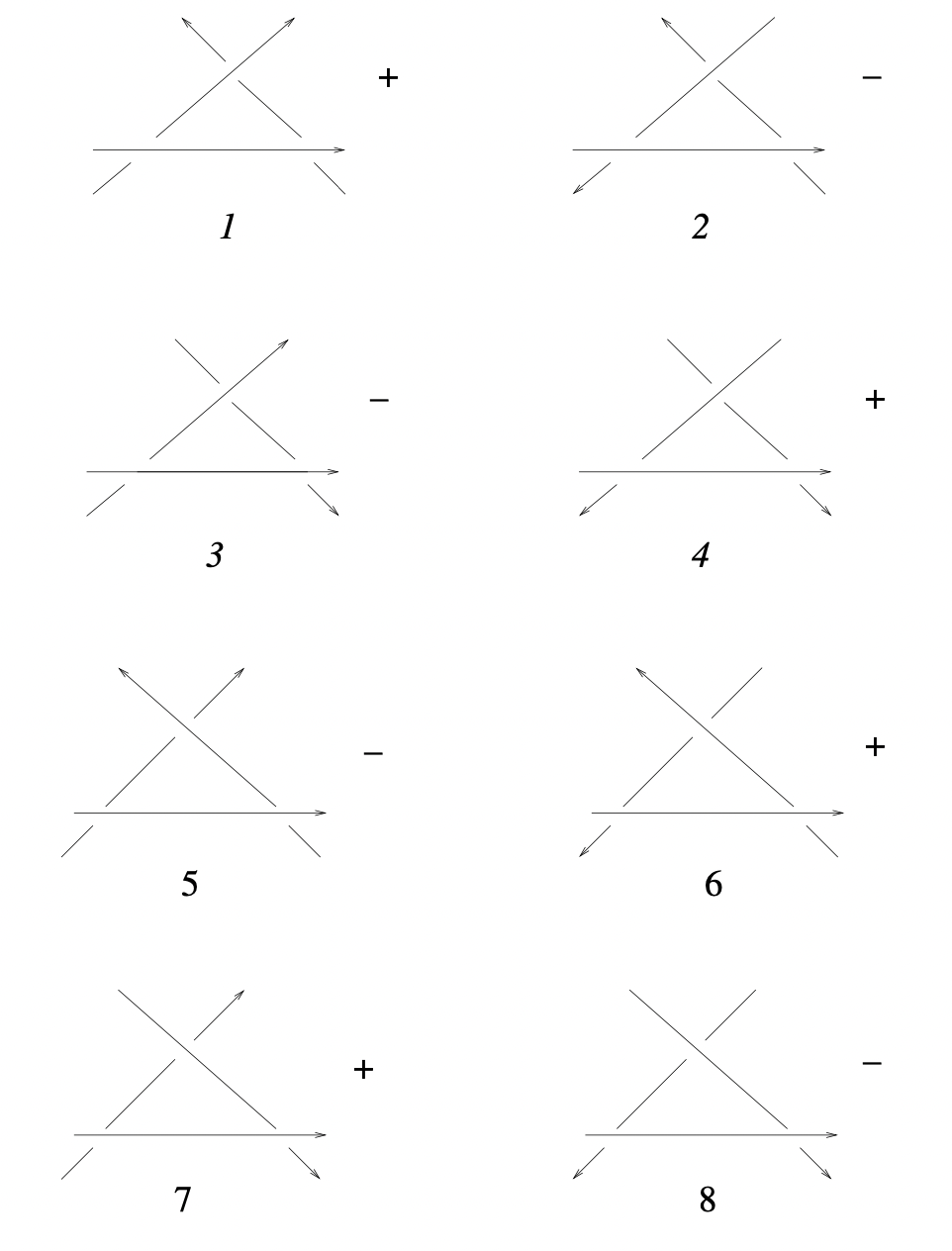}
\caption{8 local types of Reidemeister move \uppercase\expandafter{\romannumeral3}}
\label{Fig.local_types}
\end{figure}
Globally there are left and right two types  demonstrated in \ref{fig.global_type}.

In the whole section, we assume that all Gauss diagram for Reidemeister move $\uppercase\expandafter{\romannumeral3}$ with 3 separate arcs have the global anti-clockwise order. 

All the types of Reidemeister move $\uppercase\expandafter{\romannumeral3}$ are displayed by Gauss diagrams as the following, where numbers represent local types:
\begin{figure}[H]
	\includegraphics[width=0.9\textwidth]{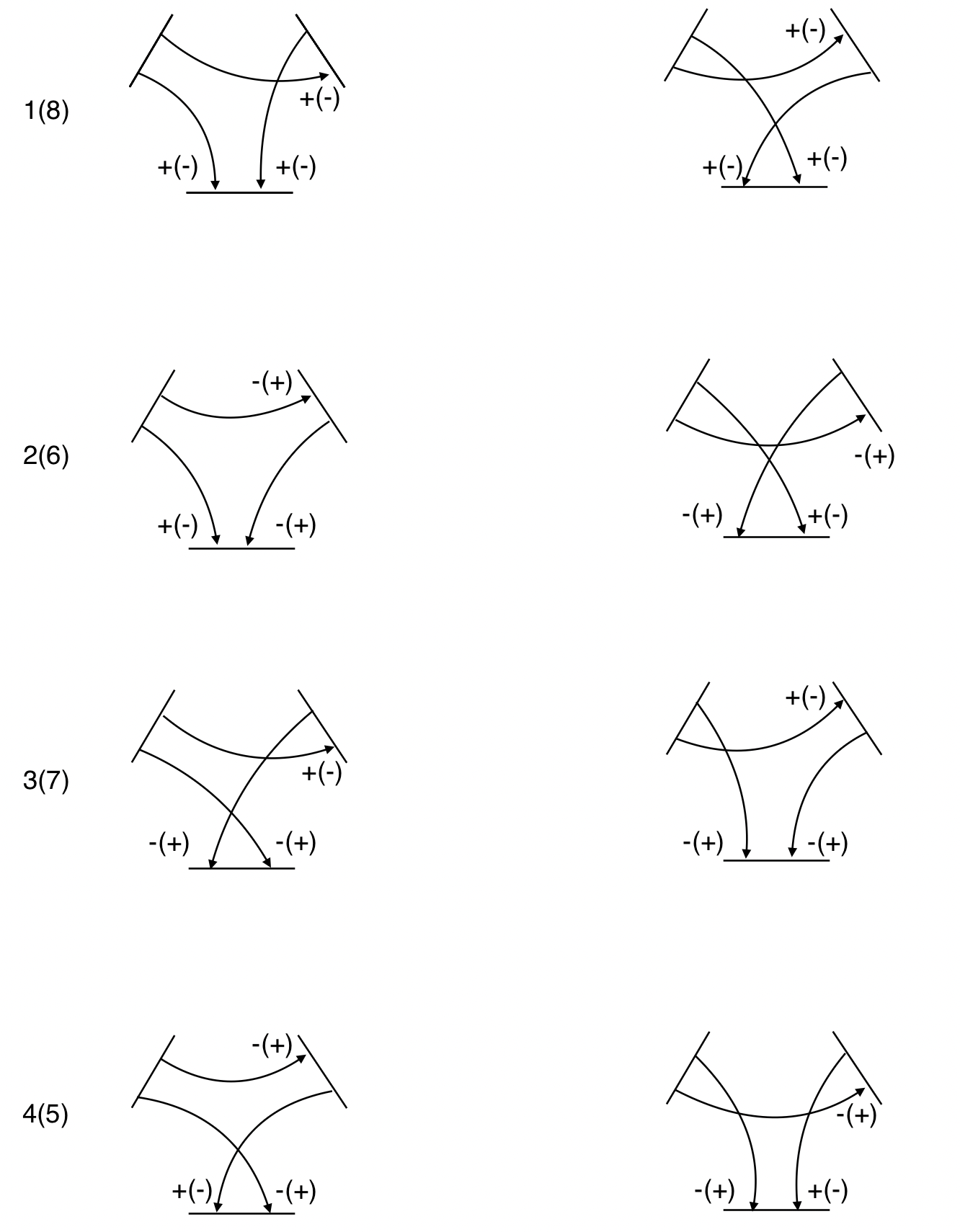} 
	\caption{left types} 
	\label{Fig.left_Gauss} 
\end{figure}
\begin{figure}[H]
	\includegraphics[width=0.9\textwidth]{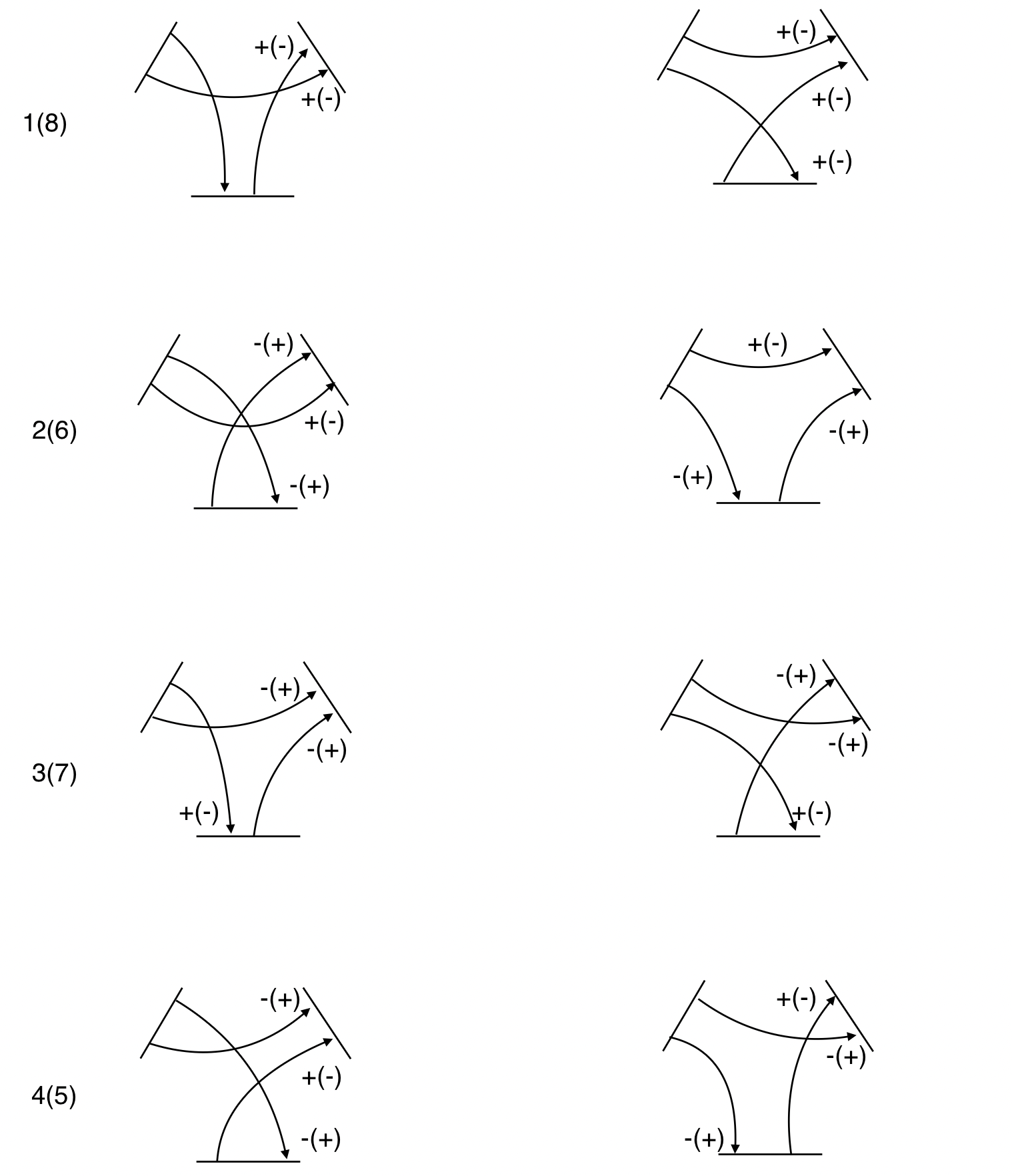} 
	\caption{right types}
	\label{Fig.right_Gauss} 
\end{figure}

Let $G$ be a Gauss diagram of a knot $K$ and we perform a \textit{positive} Reidemeister move $\uppercase\expandafter{\romannumeral3}$, meaning of type 1, on $G$ and we get a Gauss diagram $\widetilde{G}$. For each $\phi \in \cup_{A}Hom(A,G)$, where $A$ runs for configurations respect to $v_3$ (see \ref{Fig.v3_formula}), we try to find a homomorphism $\theta(\phi) \in \cup_{A}Hom(A,\widetilde{G})$, i.e., to construct a map $$\theta : \cup_{A}Hom(A,G) \rightarrow \cup_{A}Hom(A,\widetilde{G}).$$ What's more, we hope that $\theta$ is bijective.

We construct it in 4 cases:
\begin{enumerate}
	\item\label{Cas.corresp1} $S(\phi)$ contains no arrows in the Reidemeister move $\uppercase\expandafter{\romannumeral3}$;
	\item\label{Cas.corresp2} $S(\phi)$ contains exactly one arrow in the Reidemeister move $\uppercase\expandafter{\romannumeral3}$;
	\item\label{Cas.corresp3} $S(\phi)$ contains exactly two arrows in the Reidemeister move $\uppercase\expandafter{\romannumeral3}$;
	\item\label{Cas.corresp4} $S(\phi)$ contains all three arrows in the Reidemeister move $\uppercase\expandafter{\romannumeral3}$.
\end{enumerate}
However, Case \ref{Cas.corresp4} is impossible by  comparing the formula $v_3$ in figure \ref{Fig.v3_formula} and the the Reidemeister move $\uppercase\expandafter{\romannumeral3}$ (type 1 in \ref{Fig.left_Gauss}, \ref{Fig.right_Gauss}). 

In Case \ref{Cas.corresp1}, if $\phi \in Hom(A_i,G)$, where $A_i$ is a configuration in $v_3$, we choose the same arrows in $\widetilde{G}$ as $S(\theta(\phi))$. $\theta(\phi) \in Hom(A_i, \widetilde{G})$ is well-defined because Reidemeister move $\uppercase\expandafter{\romannumeral3}$ just changes locally near the three related crossings and the rest remain unchanged. 

In Case \ref{Cas.corresp2}, if $\phi \in Hom(A_i,G)$, where $A_i$ is a configuration in $v_3$ and $\alpha \in S(\phi)$ is the arrow in the Reidemeister move $\uppercase\expandafter{\romannumeral3}$, we choose $S(\theta(\phi)) = \{ \widetilde{\alpha} \} \cup (S(\phi) - \{ \alpha \})$, where $\widetilde{\alpha}$ is the resulting arrow of $\alpha$ after the the Reidemeister move $\uppercase\expandafter{\romannumeral3}$. $S(\theta(\phi))$ determines a well-defined homomorphism because Reidemeister move $\uppercase\expandafter{\romannumeral3}$ preserves signs and the relative order of $\alpha$ and the arrows that do not participate in the Reidemeister move $\uppercase\expandafter{\romannumeral3}$. 

Case \ref{Cas.corresp3} is complicated. We first give all possibilities of the two arrows in the Reidemeister move $\uppercase\expandafter{\romannumeral3}$. And then we shall give the map $\theta$ by listing it out directly. 

In the following figures, the Roman numbers $\uppercase\expandafter{\romannumeral1}$, $\uppercase\expandafter{\romannumeral2}$,$\uppercase\expandafter{\romannumeral3}$ represent which two arrows are in $S(\phi)$. The numbers 1,2,3 represent where the base point lies. And the letter $L$ and $R$ represent the global type of the positive Reidemeister move $\uppercase\expandafter{\romannumeral3}$. "Prime" means the Gauss diagrams performing the Reidemeister move $\uppercase\expandafter{\romannumeral3}$.
\begin{figure}[H]
	\centering
	\includegraphics[width=0.8\textwidth]{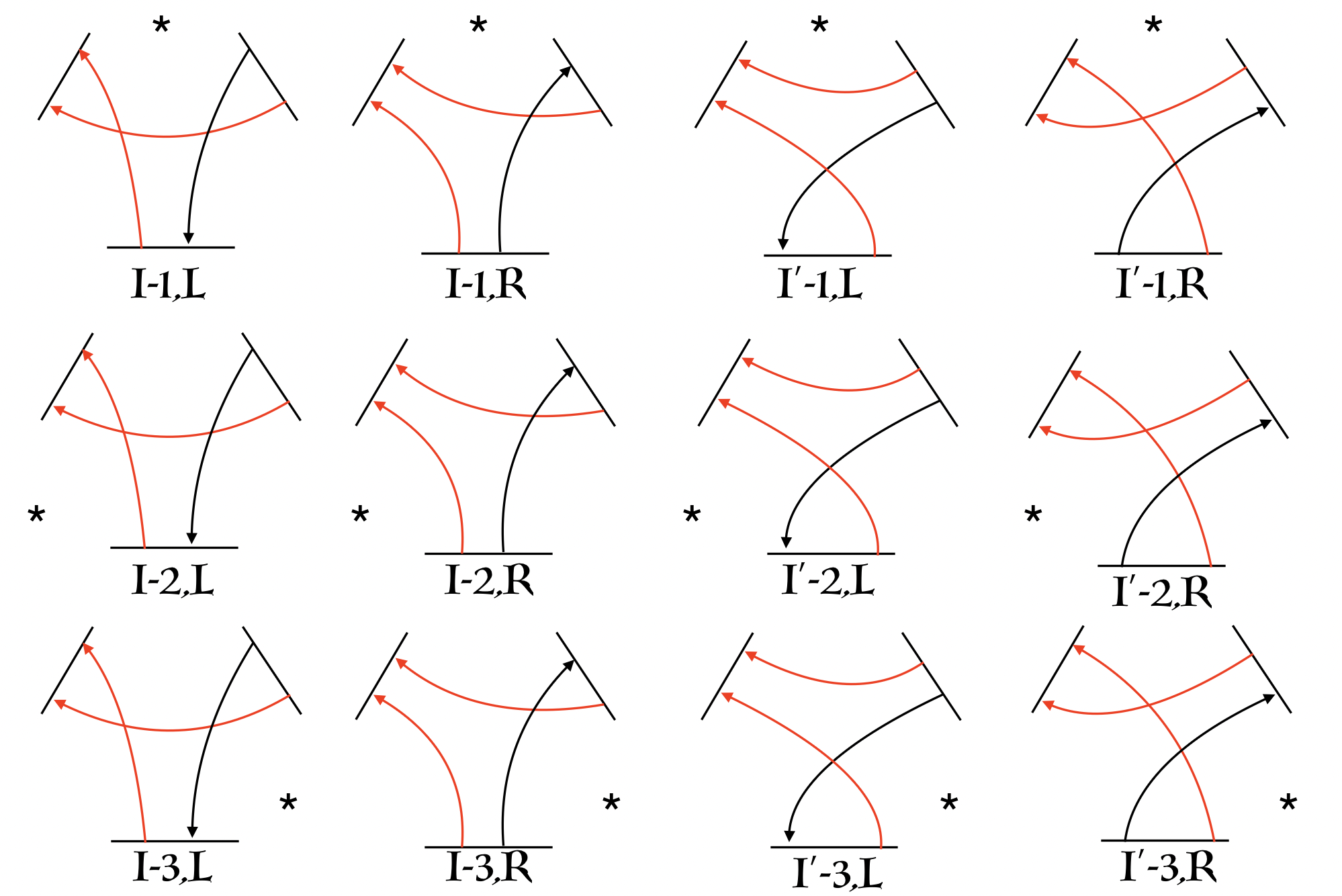}
	\caption{type-{\uppercase\expandafter{\romannumeral1}}}
	\label{Fig.type_I}
\end{figure}
\begin{figure}[H]
	\centering
	\includegraphics[width=0.8\textwidth]{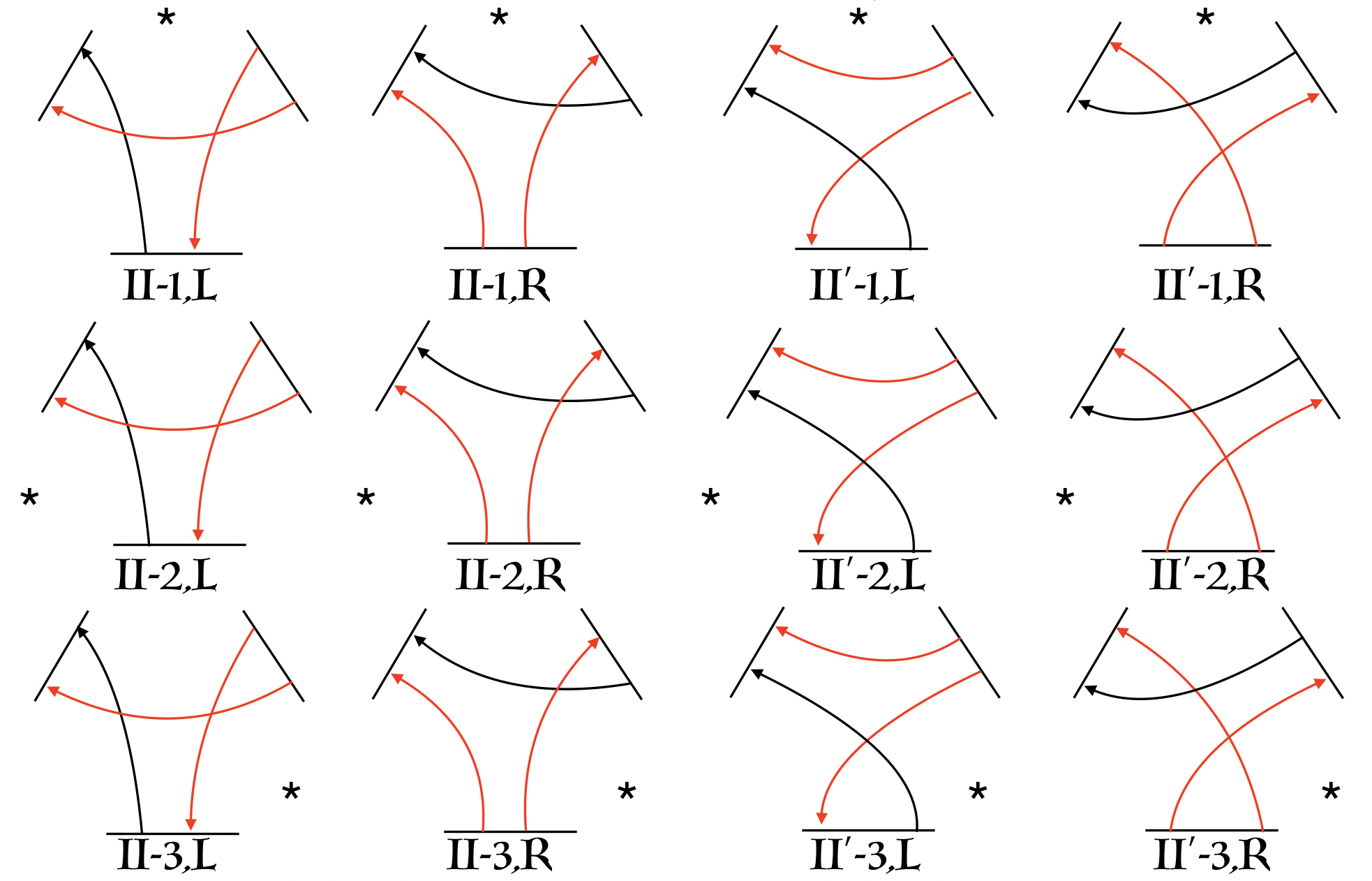}
	\caption{type-{\uppercase\expandafter{\romannumeral2}}}
	\label{Fig.type_II}
\end{figure}
\begin{figure}[H]
	\centering
	\includegraphics[width=0.8\textwidth]{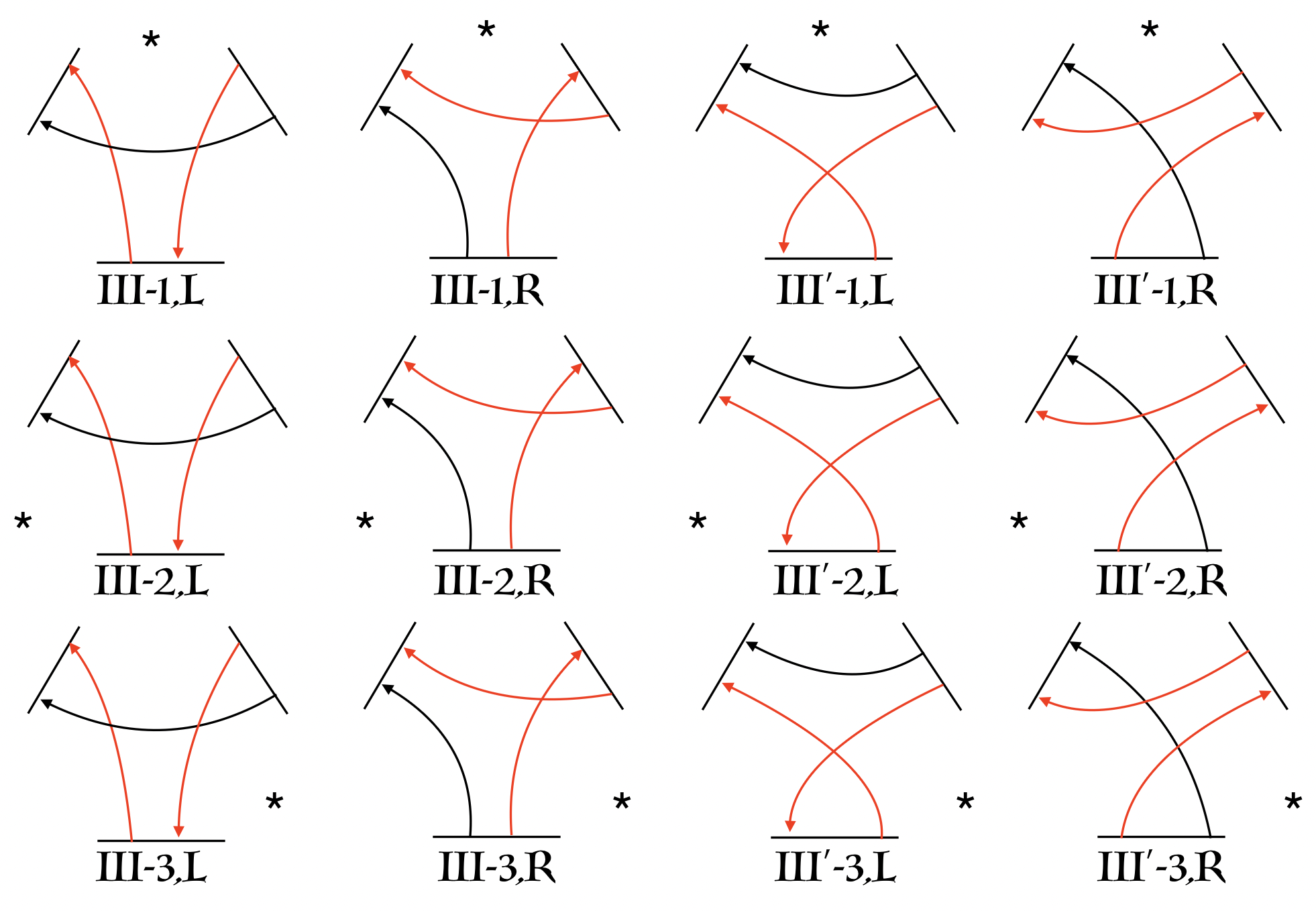}
	\caption{type-{\uppercase\expandafter{\romannumeral3}}}
	\label{Fig.type_III}
\end{figure}
\begin{rem}
The three figures above show all possibilities for the two red arrows which are in $S(\phi)$. 
\end{rem}
Now we can construct $\theta$ in Case \ref{Cas.corresp3}. There are 6 subcases depending on where the base point is and the global type of the Reidemeister move $\uppercase\expandafter{\romannumeral3}$. 
\begin{enumerate}
	\item\label{Cas.1L} 1,L:
	\begin{figure}[H]
		\centering
		\includegraphics[width = 0.47\textwidth]{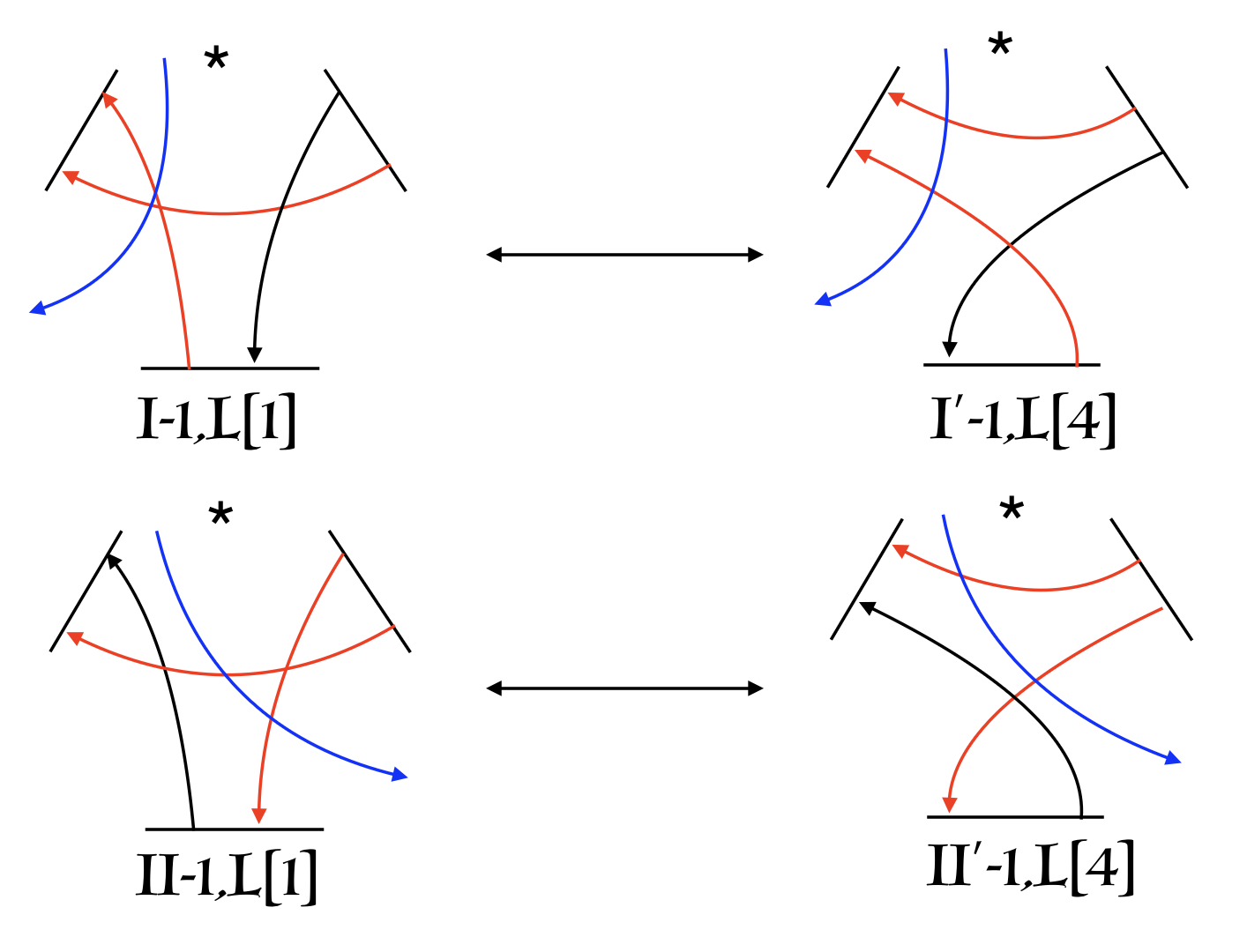}
		\label{Fig.1L}
	\end{figure}
	\item\label{Cas.2L} 2,L:
	\begin{figure}[H]
		\centering
		\includegraphics[width = 0.47\textwidth]{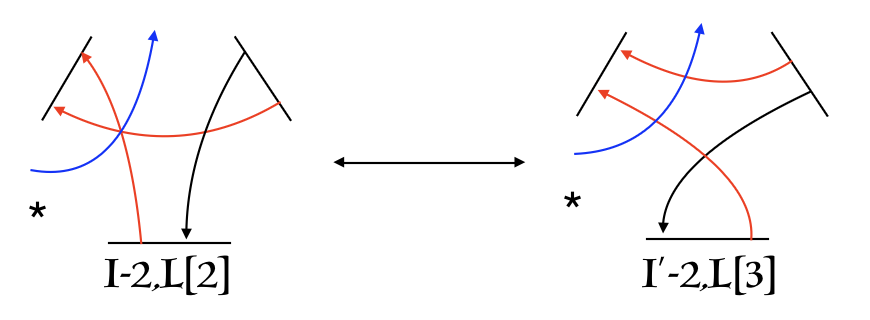}
		\label{Fig.2L}
	\end{figure}
	\item\label{Cas.3L} 3,L:
	\begin{figure}[H]
		\centering
		\includegraphics[width = 0.75\textwidth]{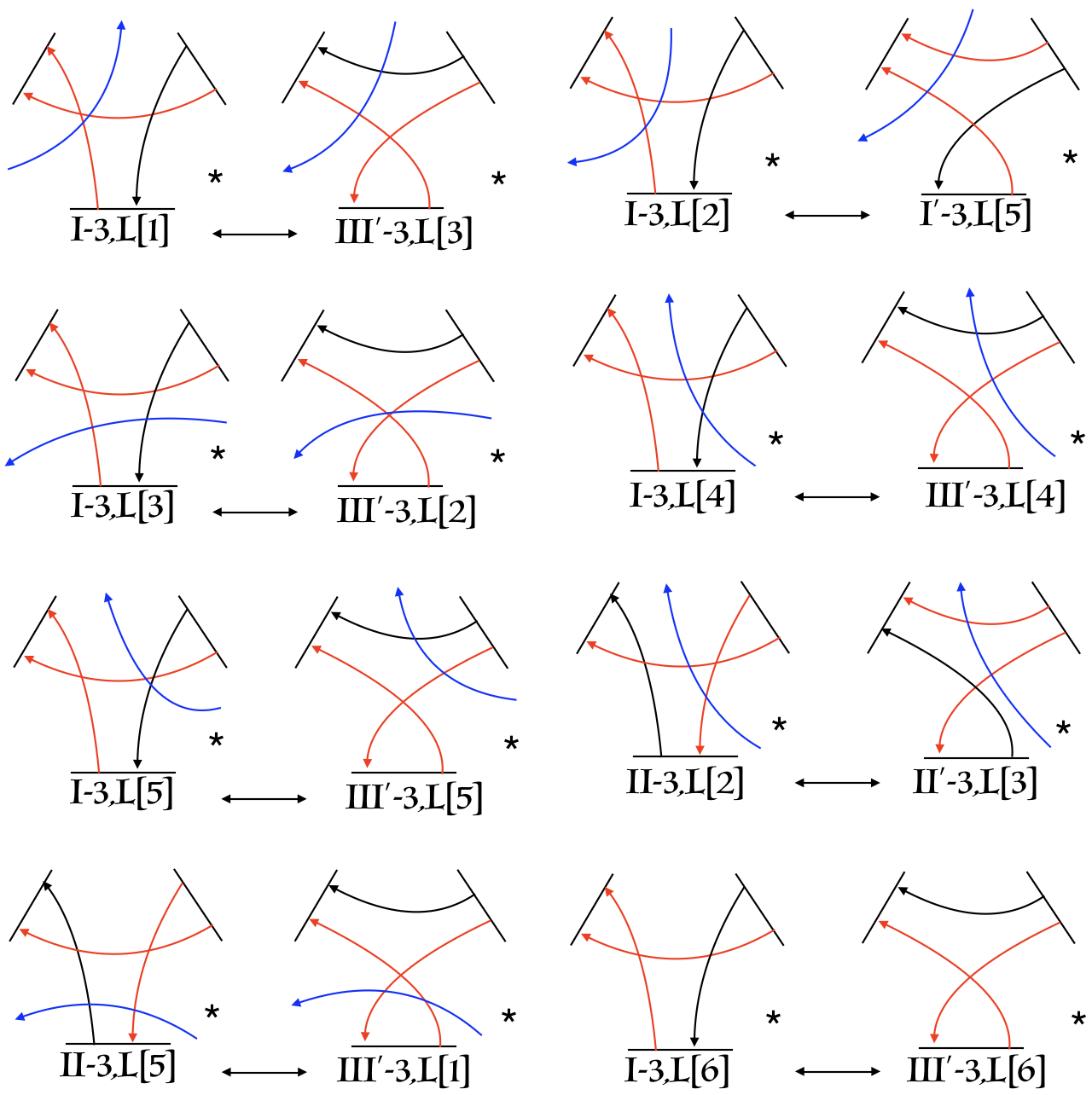}
		\label{Fig.3L}
	\end{figure}
	\item\label{Cas.1R} 1,R:
	\begin{figure}[H]
		\centering
		\includegraphics[width = 0.47\textwidth]{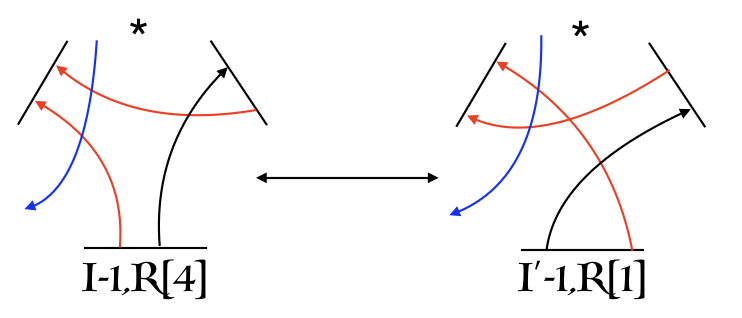}
		\label{Fig.1R}
	\end{figure}
	\item\label{Cas.2R} 2,R:
	\begin{figure}[H]
		\centering
		\includegraphics[width = 0.45\textwidth]{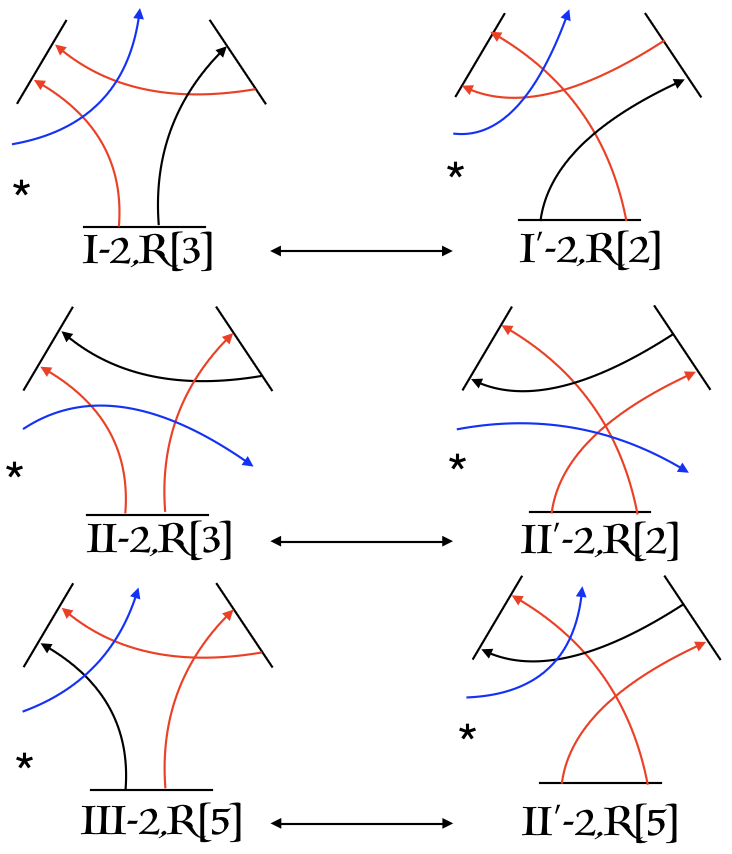}
		\label{Fig.2R}
	\end{figure}
	\item\label{Cas.3R} 3,R:
	\begin{figure}[H]
		\centering
		\includegraphics[width = 0.74\textwidth]{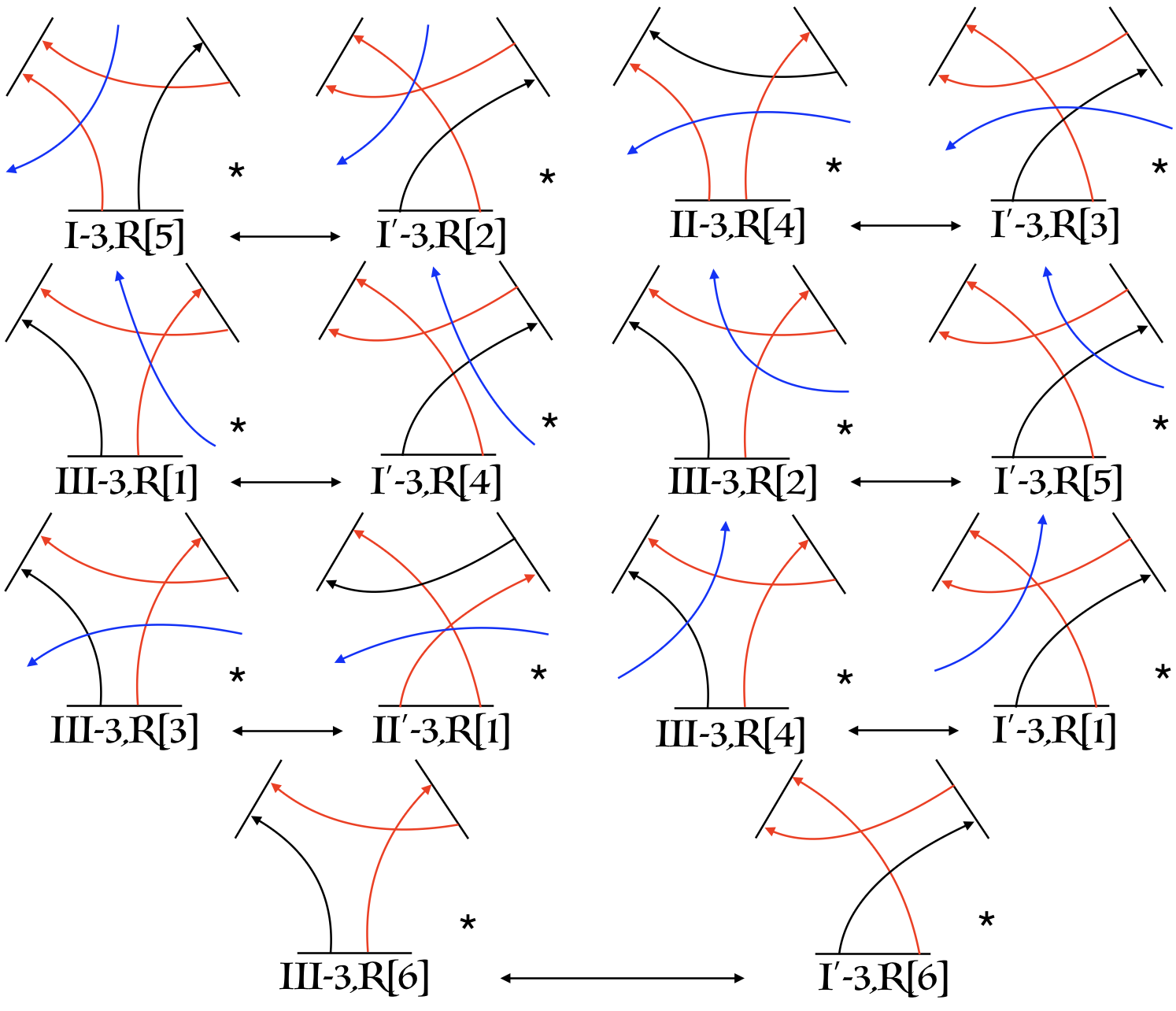}
		\label{Fig.3R}
	\end{figure}
\end{enumerate}
The figures above indicate how $\theta$ maps at $\phi$: if it is a diagram above, just map it following the arrow between the diagrams. The coloured arrows are exactly $S(\phi)$ or $S(\theta(\phi))$. The numbers in the bracket indicate the configuration in the Gauss diagram formula for $v_3$. The bijective $\theta$ is constructed. 
However such a map is not unique. 

\section{Commutation equations}\label{s5}
The commutation equations mean that for a loop $\gamma$ which encounters two successive Reidemeister move $\uppercase\expandafter{\romannumeral3}$ $p$ and $q$, then our one-cocycle is independent of the order of $p,q$. 
\begin{figure}[H]
\centering
\includegraphics[width= 0.35\textwidth]{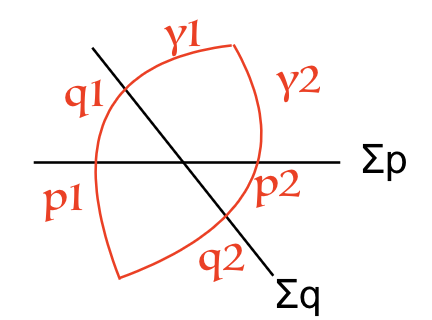}
\caption{commutation equations}
\label{Fig.commutation_eq}
\end{figure}
In fact we have a stronger relation: 
\begin{prop}
Suppose we have two Reidemeister moves $\uppercase\expandafter{\romannumeral3}$ $p,q$ of right types to be act on a Gauss diagram $G\in \widetilde{M_n}$. Let $W(p)$ and $\widetilde{W}(p)$ be the weights of $p$ before and after the Reidemeister move $\uppercase\expandafter{\romannumeral3}$ $q$ acting on $G$. Then $W(p) = \widetilde{W}(p)$. 
\end{prop}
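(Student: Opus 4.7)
The plan is to leverage the bijection $\theta$ from Section \ref{s4} and show that it restricts correctly to the sets of homomorphisms that define $W(p)$ and $\widetilde{W}(p)$. Recall that for a Reidemeister move III, $\theta$ gives a sign-preserving bijection between $\cup_{i}Hom(A_i,G)$ and $\cup_{i}Hom(A_i,\widetilde{G})$, where the $A_i$ are the configurations appearing in the formula for $v_3$. The same kind of case analysis as in Section \ref{s4} can be carried out for every right global type of Reidemeister III, so we may assume such a $\theta$ is available for our move $q$.

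Recall next that the weight $W(p)$ is the restriction of the $v_3$-count to those homomorphisms $\phi$ satisfying two extra conditions: (a) every arrow of $S(\phi)$ is persistent, i.e.\ $S(\phi) \subset G_p$; and (b) $hm_p \in S(\phi)$ is the first image arrow encountered starting from $\infty$. The key step is to verify that $\theta$ sends the subset of $\cup_i Hom(A_i,G_p)$ cut out by (a) and (b) bijectively onto the analogous subset in $\widetilde{G_p}$, whereupon the sign-preserving property will immediately yield $W(p) = \widetilde{W}(p)$.

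Condition (a) is essentially automatic: a Reidemeister III move does not change the homological marking of a crossing, since $[\alpha^+]$ is a topological invariant of the knot and unaffected by the local isotopy performed by $q$, and in the case analysis of Section \ref{s4} the image arrow $\widetilde{\alpha}$ inherits the sign of $\alpha$. Hence the three arrows of $q$ keep their markings, and the set of persistent arrows is carried bijectively onto itself. For condition (b), note first that $p$ and $q$ are distinct triple points, so $hm_p$ is not one of the three arrows of $q$; in particular $\theta$ fixes $hm_p$, and $hm_p \in S(\phi)$ iff $hm_p \in S(\theta(\phi))$. Moreover, since $q$ is a local move, all feet and heads of its three arrows stay inside the same small arcs of the circle before and after $q$; therefore the position of $hm_p$ relative to $\infty$ and to every arrow outside of $q$ is unchanged.

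The main obstacle is the bookkeeping in Case \ref{Cas.corresp2} and Case \ref{Cas.corresp3}, where $S(\phi)$ contains one or two arrows of $q$, since the internal order of those arrows within the local arcs of $q$ can be reshuffled by $\theta$. The way to handle it is to inspect the six subcases (\ref{Cas.1L})--(\ref{Cas.3R}) of Section \ref{s4} and check that the local region hosting the three arrows of $q$ sits either entirely ``before'' $hm_p$ on the circle (read from $\infty$) or entirely ``after'' it, so that the internal shuffle performed by $\theta$ cannot promote an arrow of $q$ past $hm_p$ in the circular order. Concretely, if some arrow of $q$ already lies between $\infty$ and $hm_p$ before the move, then $hm_p$ is not first for $\phi$ and hence $\phi$ does not contribute to $W(p)$; by the same token its image does not contribute to $\widetilde{W}(p)$, so both sides lose the contribution together. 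Combining the three paragraphs, $\theta$ restricts to a sign-preserving bijection on the contributing homomorphisms, which gives $W(p) = \widetilde{W}(p)$.
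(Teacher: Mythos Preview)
Your overall strategy --- use the bijection $\theta$ from Section~\ref{s4} and check that it respects the two extra conditions defining $W(p)$ --- is exactly the paper's strategy. The paper's proof is in fact much terser than yours: it simply invokes that $q$ is a ``local deformation'' and asserts that $\theta_q$ preserves the conditions. Your more explicit verification of conditions (a) and (b) is a reasonable expansion of that one-line argument.

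There is one real difference in structure. Section~\ref{s4} only builds $\theta$ for the \emph{positive} Reidemeister~III move (local type~1). You sidestep this by asserting that the same case analysis goes through for all local/global types of $q$. That is true, but it multiplies the bookkeeping considerably. The paper instead first shows that a Reidemeister~II move leaves the weight unchanged (the two new arrows never appear together in a $v_3$-configuration, and singly their contributions cancel by sign), and then uses the standard fact that any Reidemeister~III move is generated by the positive one together with Reidemeister~II moves. This reduces everything to the single $\theta$ already constructed. Your route works, but the paper's reduction is more economical.

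There is also a genuine imprecision in your final paragraph. You claim that the three small arcs hosting the endpoints of $q$ sit ``either entirely before $hm_p$ or entirely after it'' on the circle from $\infty$. That dichotomy is false in general: the arcs of $q$ can be interleaved with $\infty$ and the endpoints of $hm_p$. What actually makes the argument go through is a counting observation: each of the three arrows of $q$ has endpoints in exactly two of the three arcs, so if one arc lies between $\infty$ and the first endpoint of $hm_p$, then \emph{any} pair of $q$-arrows contains at least one arrow with an endpoint in that arc. Hence in Case~\ref{Cas.corresp3}, the presence of such an arc forces non-contribution both before and after $\theta$, regardless of which pair $\theta$ selects; and in Case~\ref{Cas.corresp2} the single arrow keeps its two arcs. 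Your conclusion is correct, but the stated reason is not; replacing the dichotomy with this pigeonhole argument closes the gap.
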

\begin{proof}
Any Reidemeister move $\uppercase\expandafter{\romannumeral2}$ does not influence the weight. Indeed, the two arrows in a Reidemeister move \uppercase\expandafter{\romannumeral2} cannot be in $S(\phi)$ together for our configurations $A_i$ of $v_3$. If one of the two arrows contributes, then the other arrow just does the same thing except with an opposite sign. (Note the type $[6]$ and $[7]$ in $v_3$) 

Therefore we can assume $q$ to be a positive Reidemeister move $\uppercase\expandafter{\romannumeral3}$, for other types could be generated by the positive one and  Reidemeister moves $\uppercase\expandafter{\romannumeral2}$. Since Reidemeister move $\uppercase\expandafter{\romannumeral3}$ is a local deformation, if there is a homomorphism $\phi \in \cup_{i} Hom(A_i,G_p)$ such that $hm_p$ lies in $S(\phi)$ and is the first image arrow from $\infty$ (resp. not satisfying one of the two conditions) then $\theta_q(\phi)$ still satisfies the two conditions (resp. not satisfies one of the two conditions). 
\end{proof}

Hence going along the meridian in the singularity case \ref{singularity_1} we have $R=0$. In a similar way, we can see that $R$ would have the value 0 going along the meridian in the case \ref{singularity_4}. Indeed, the two crossing of a Reidemeister II move do not contribute together in any configuration of $v_3$, and the the contribution of the each single crossing would cancel out since they have opposite signs.

\section {Cube equations}\label{s6}

We now want to solve the cube equations. Let $\gamma$ be a small loop in $M_n$ around a strata of codimension $2$ of the form $\Sigma^2_{\mbox{self-flex}}$. The intersection of this strata with a meridional $2$-disc has the following form :

\begin{figure}[H]
		\center
		\includegraphics[width=0.7\textwidth]{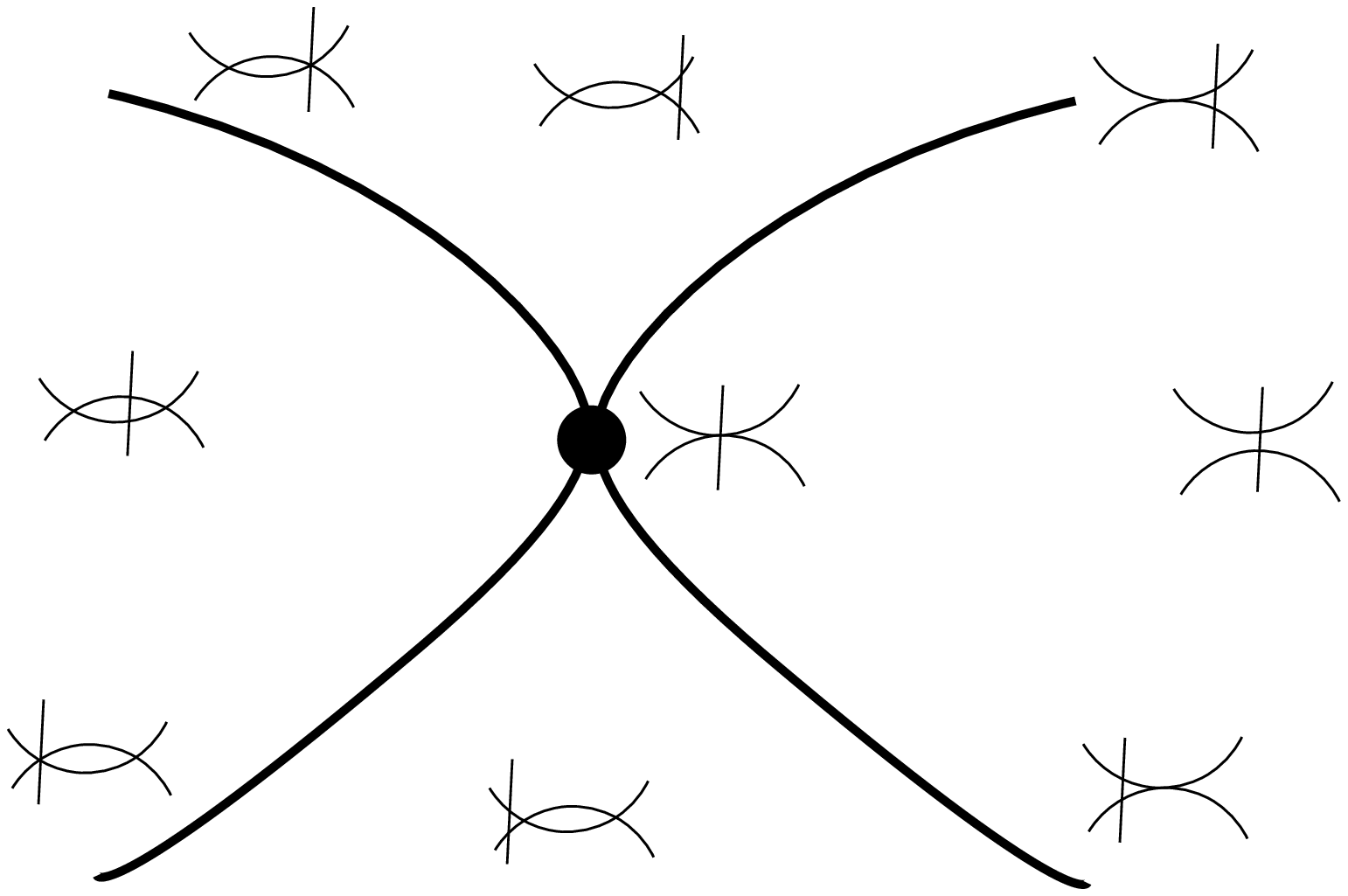}
		\caption{Strata $\Sigma^2_{\mbox{trans-self}}$ intersected with a transversal $2$-disk}
		\label{Fig.strata_trans-self}
		\end{figure}

The strata of codimension $2$ is the intersection of two strata of codimension $1$, one of them corresponding to a Reidemeister III move, and the other one corresponding to a Reidemeister II. Therefore, $\gamma$ only contributes to $R$ when it crosses the former. In fact, $\gamma$ crosses this strata twice, and once in each direction. We can also see that the knot diagrams at each moment $\gamma$ crosses the strata are almost similar. In fact, the diagrams (noted $G$ and $G'$) only differ in one of the following ways :

\begin{figure}[H]
		\center
		\includegraphics[width=0.7\textwidth]{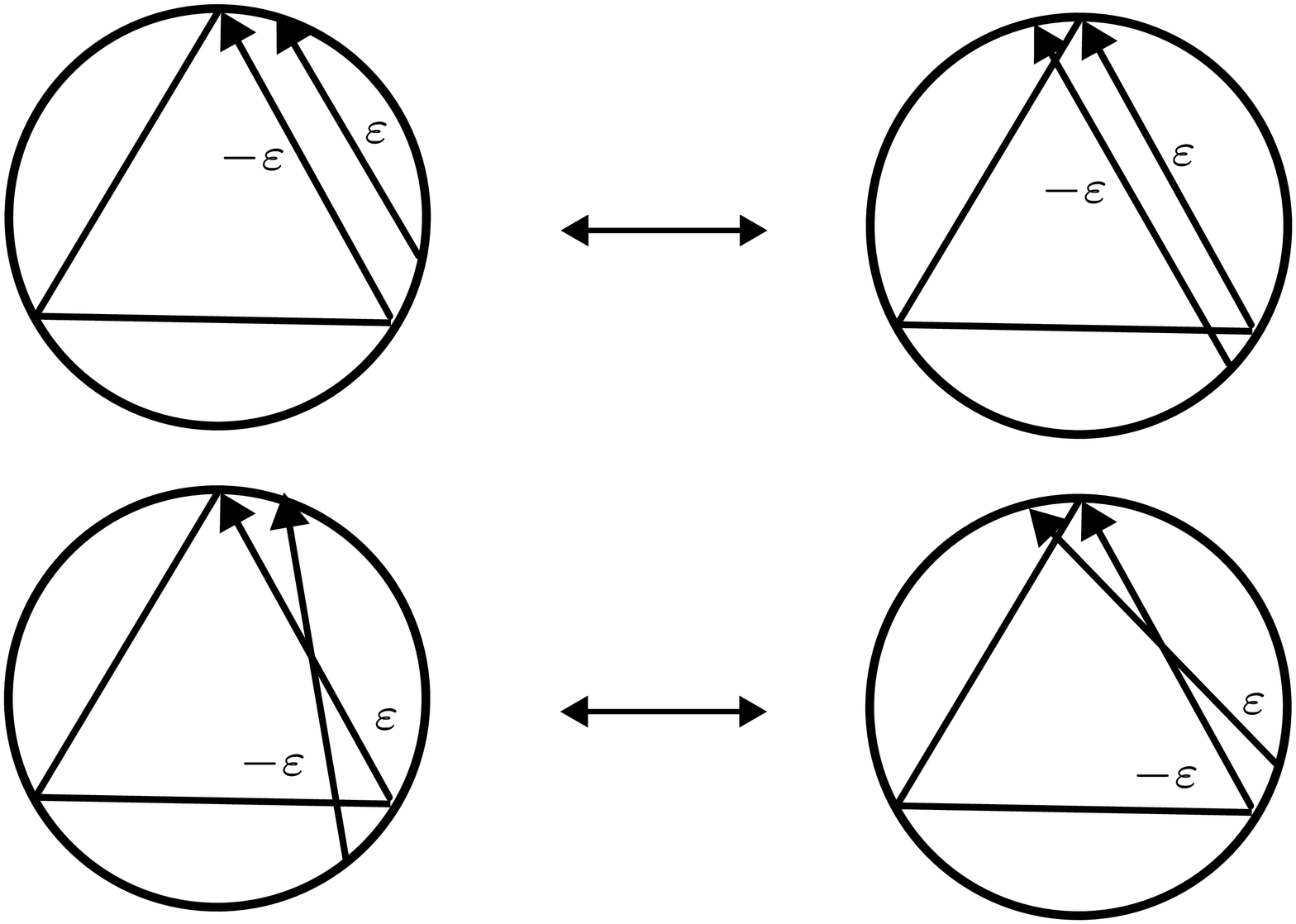}
		
		\label{Fig.trans-self_arrow_slide}
		\end{figure}

We will note $p$ the triple point associated with the triangle in the left side of the figure above, and similarly $p'$ for the right side. They have the same global type. The corresponding arrows are noted $d$, $hm$, $ml$, $d'$, $hm'$, $ml'$. Within the pair of arrows that differ from one side to the other, the arrow belonging to $p$ is noted $q$, the one belonging to $p'$ is noted $q'$. Clearly $q$ is $d$ if and only if $q'$ is $d'$, and the same holds for $hm$ and $ml$.

Now remember that $\gamma$ only contributes to $R$ when crossing this strata if the Reidemeister move is of type $r(a,n,a)$. Therefore unless $q$ is $hm$ and $q'$ is $hm'$, $q$ and $q'$ will have homological marking $a$ so the will not contribute to the weights. We will then have $W(p) = W(p')$ because any subdiagram that contributes in $G$ cannot contain $q$ therefore it also contributes in $G'$, with same sign, and conversely. We will then also have that $w(hm) = w(hm')$ and $sign(p) = - sign(p')$. Therefore, unless $q$ and $q'$ are $hm$ and $hm'$ respectively, will have $R(\gamma) = 0$.

We now suppose that $q=hm$, $q' = hm'$, and that $p$ and $p'$ are of type $r(a,n,a)$. We want to check that contributions to $R$ cancel out; this time $\mbox{sign}(p) w(hm) =\mbox{sign}(p') w(hm') $. 

Let $S$ be a subdiagram of $G$. We first suppose that it has three arrows and that it doesn't contain the arrow $hm'$. The if we replace the $h$ by $hm'$ in $S$ and note $S'$ the subdiagram of $G'$ obtained, it contributes to $G'$ if and only if $S$ contributes to $G$, and the contributions have opposite signs, therefore they cancel each other. 

Then, we suppose that $S$ contains three arrows, including $hm$ and $hm'$. If it contributes to $W(p)$ or to $W(p')$, it has to be of the form \includegraphics[width = 1.2cm]{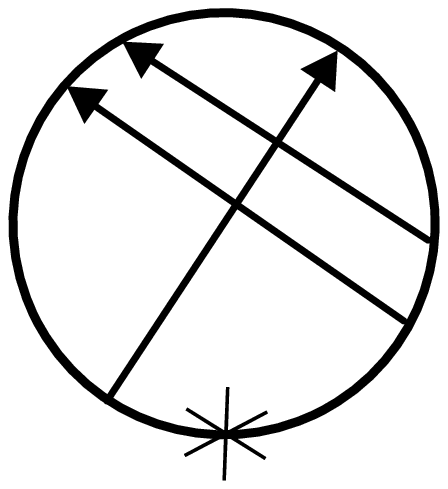} or \includegraphics[width = 1.2cm]{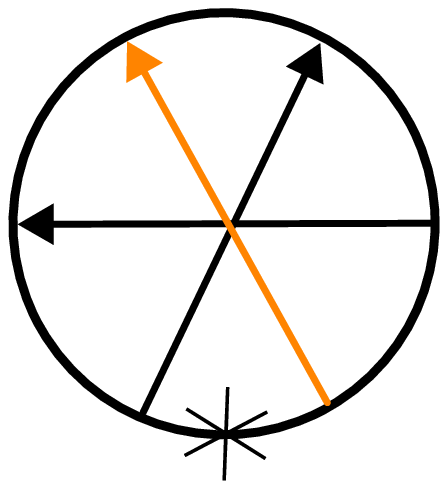}, depending on whether we are in the "above" or "below" case in the previous figure. The subdiagram $S$ either contributes to $W(p)$ or to $W(p')$, depending on which foot is closer to the point $\infty$, and the contribution has sign opposite to the writhe of the arrow that crosses both $hm$ and $hm'$. Therefore, the sum of all those contributions to $R$ is $- \mbox{sign}(p)w(hm)$ times the "number" of arrows crossing both $hm$ and $hm'$, going from $hm^+$ to $hm^-$, and with homological marking $0$, each arrow being counted with sign its writhe.

Finally, we look at the subdiagrams with only two crossings. Such a diagram cannot contribute if it contains both $hm$ and $hm'$. If it contributes to $G$ for example, it contains $hm$ and the second arrow has homological marking $0$, and crosses $hm$ from $hm^+$ to $hm^-$. Then the same must remain true if we replace $hm$ by $hm'$. If look back at the term of order two in the formula for $v_3$, we see that the sum of all those contribution to $R$ is  $ \mbox{sign}(p)w(hm)$ times the "number" of arrows crossing both $hm$ and $hm'$, going from $hm^+$ to $hm^-$, and with homological marking $0$, each arrow being counted with sign its writhe. Therefore this sum cancels out with the sum of contributions from the previous case.\\

Finally, we have shown that for $\gamma$ a small loop around a strata $\Sigma^2_{\mbox{self-flex}}$ , $R(\gamma) = 0$, in other words we have solved the cube equations.

\section{Tetrahedron equations}\label{s7}
Due to cube equations and commutation equations, we can restrict ourselves to the positive \textit{quadruple point}. 
Given 4 lines intersecting at one common points which is called a \textit{quadruple point}, with a perturbation we get 6 ordinary crossings. Every 3 lines can make a Reidemeister move $\uppercase\expandafter{\romannumeral3}$ hence there are 4 stratas of Reidemeister move $\uppercase\expandafter{\romannumeral3}$ in the moduli space. 
\begin{figure}[H]
\centering
\includegraphics[width= 0.46\textwidth]{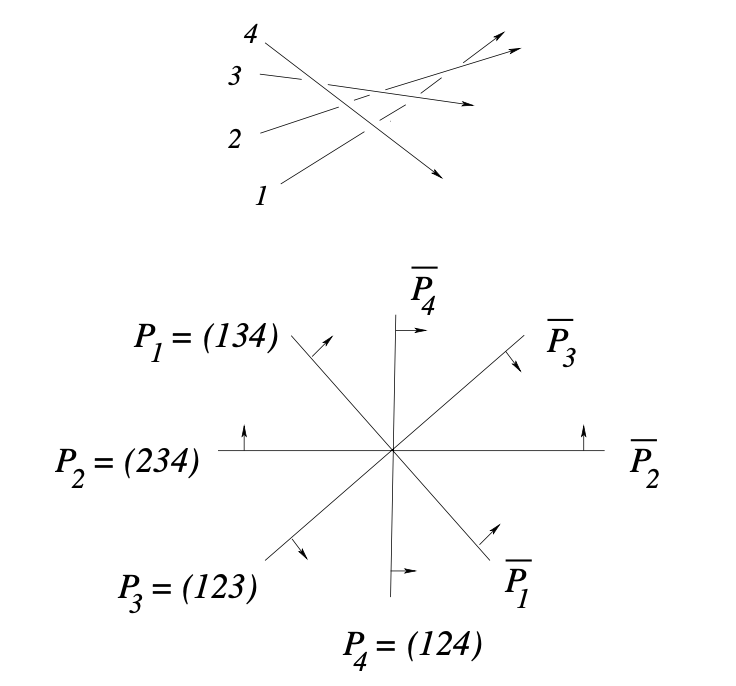}
\caption{quadruple point and 4 stratas. The numbers in the bracket indicate which 3 lines to perform a Reidemeister move.}
\label{Fig.quadruple}
\end{figure}
A meridian around the quadruple point is a circle going through the 4 stratas: 
\begin{figure}[H]
\centering
\includegraphics[width= 0.7\textwidth]{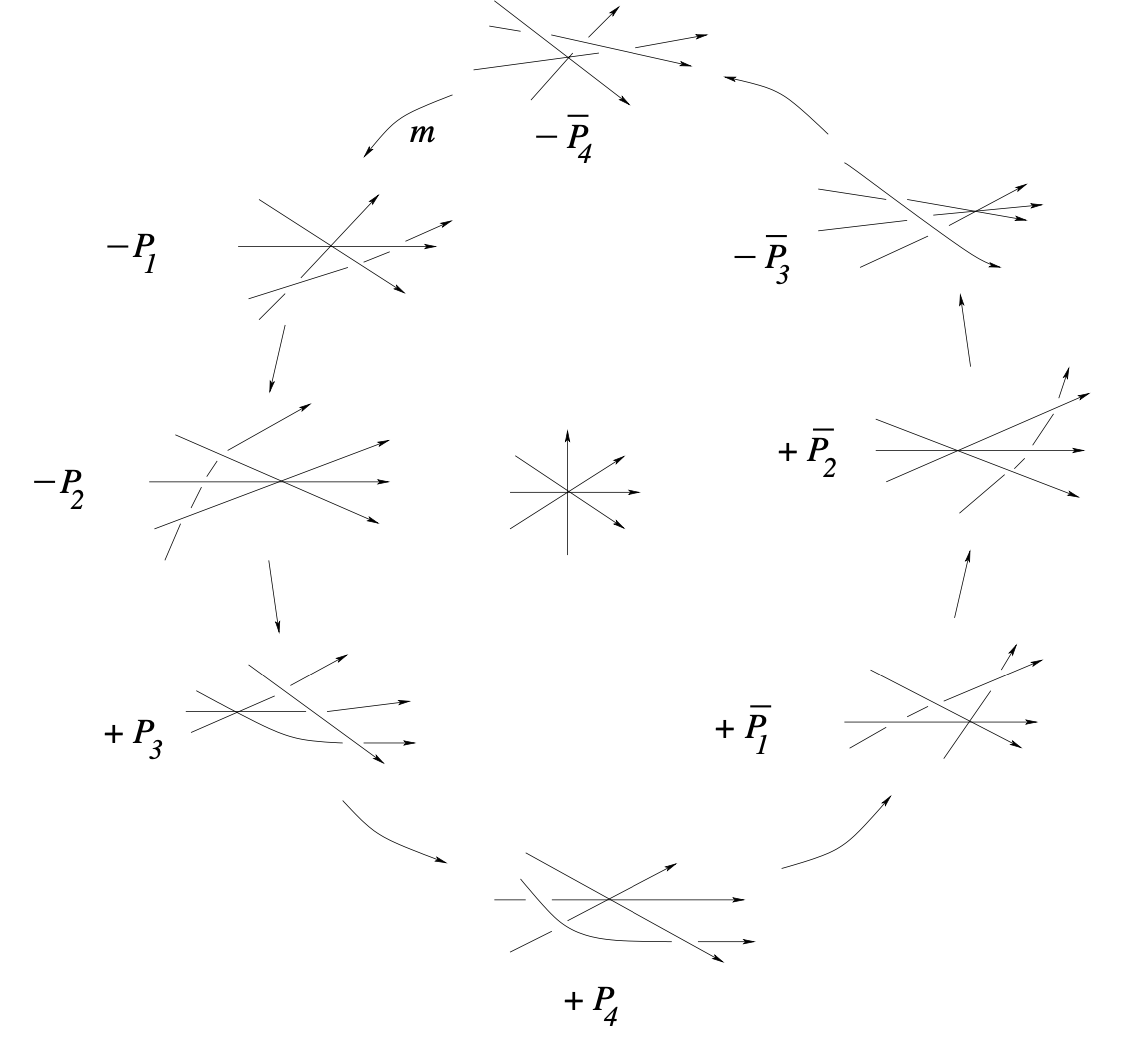}
\caption{meridian around a quadruple point}
\label{Fig.tetrahedron}
\end{figure}
Going along this circle, our one-cocycle equals to 0, which is the \textit{tetrahedron equation}. There are 6 global types of tetrahedron equations, 
whose Gauss diagram are displayed as: 

\begin{figure}[H]
\centering
\includegraphics[width= 0.7\textwidth]{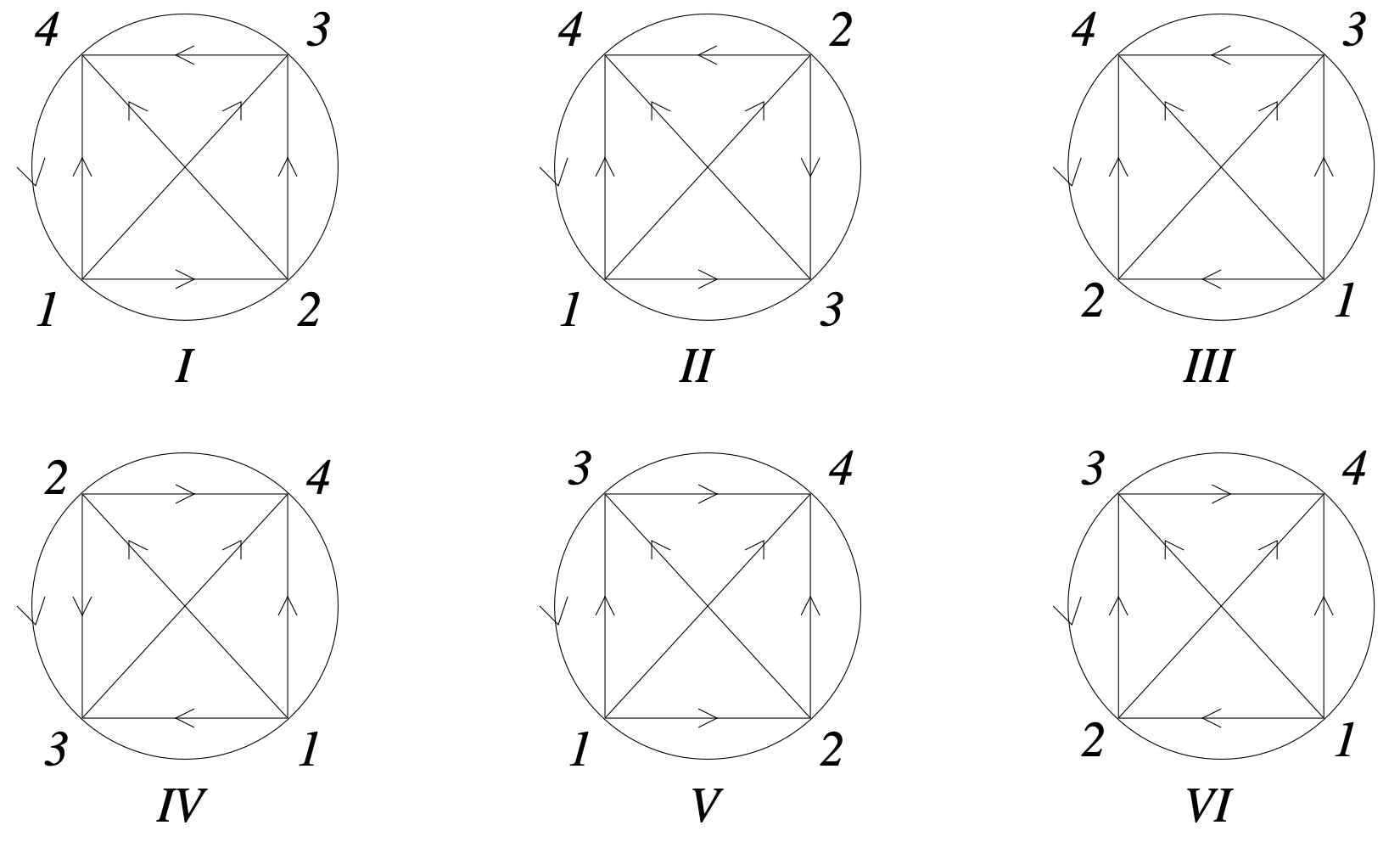}
\caption{6 global types of quadruple points}
\label{Fig.quadruple_global}
\end{figure}

\begin{lem}\label{Lem.marking}
Let $G\in \widetilde{M_n}$ be an Gauss diagram. If an arrow $\alpha$ in $G$ is a persistent crossing, then from $\infty$ as we go along the circle, we first encounter the foot (resp. head) of $\alpha$ if and only if $[\alpha] = n$ (resp. $0$). 
\end{lem}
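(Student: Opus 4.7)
The plan is to translate the combinatorial question (which of the foot or head of $\alpha$ is reached first from $\infty$) into a homological one by smoothing $\alpha$ and exploiting the fact that, for knots in $M_n$, the disc at infinity is intersected positively at every point.

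First, I would recall that smoothing $\alpha$ in the orientation-respecting way splits the knot into two closed curves $\alpha^+$ and $\alpha^-$ with $[\alpha^+]+[\alpha^-]=n$, and that in $V=D^2\times S^1$ the homology class of a loop equals its algebraic intersection number with the disc at infinity $D^2\times\{0\}$. Since every knot in $M_n$ meets this disc in exactly $n$ points, all these intersections must have the same sign; hence the number of geometric intersection points of $\alpha^\pm$ with the disc at infinity is exactly $[\alpha^\pm]$. In particular, whichever of $\alpha^\pm$ has homological marking $0$ contains no intersection points at all, and therefore cannot contain the base point $\infty$, which is itself one of the $n$ intersection points.

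Second, I would do a short local bookkeeping on the Gauss diagram circle. The two endpoints of $\alpha$ cut the circle into an ``outer'' arc (through $\infty$) and an ``inner'' arc. The oriented smoothing at $\alpha$ connects the incoming under-strand to the outgoing over-strand (giving $\alpha^+$) and the incoming over-strand to the outgoing under-strand (giving $\alpha^-$). A direct check of the two cases shows: if the foot of $\alpha$ is encountered first when traveling from $\infty$ in the positive direction of the circle, then the outer arc closes up at the crossing via under-in to over-out and is therefore $\alpha^+$; if the head is encountered first, the outer arc is $\alpha^-$.

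Combining the two steps finishes the proof. For a persistent $\alpha$, exactly one of $[\alpha^\pm]$ equals $0$; by the first step this component is disjoint from $\infty$, so it must be the inner arc. By the second step, $[\alpha]=[\alpha^+]=n$ then forces $\alpha^+$ to be the outer arc, i.e.\ the foot is reached first, and $[\alpha]=0$ forces the head to be reached first; both implications reverse by the same argument. I do not expect any real obstacle: the only subtlety is pinning down the smoothing convention in the second step and confirming that ``foot first'' pairs with ``outer arc $=\alpha^+$'' rather than the opposite; everything else is a routine homological count.
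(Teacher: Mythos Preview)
Your argument is correct. The paper actually states this lemma without proof, so there is no ``paper's own proof'' to compare against; what you have written is a clean justification of a fact the authors treat as evident.

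Two minor remarks. First, the key input---that the knot meets the disc at infinity in exactly $n$ points---is simply asserted in the paper's setup of $M_n$; you use it correctly to conclude all intersections are positive, whence $[\alpha^\pm]$ coincides with the geometric count and the component of marking $0$ avoids $\infty$. Second, your local bookkeeping in the second step matches the paper's conventions (foot~$=$ under-strand, head~$=$ over-strand, and $\alpha^+$ goes ``lower to higher''), so ``foot first $\Leftrightarrow$ outer arc $=\alpha^+$'' is indeed the right pairing. With these conventions fixed, the biconditional follows exactly as you describe, using that a persistent crossing has $[\alpha]\in\{0,n\}$ to get the converse direction.
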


To prove the tetrahedron equation for our one-cocycle, we need to calculate it in these 6 types. However, in Type $\uppercase\expandafter{\romannumeral6}$ all Reidemeister move $\uppercase\expandafter{\romannumeral3}$ are of global left type, so the tetrahedron equation is satisfied trivially. Let us work for the other 5 types. See figures \ref{Fig.Type1}, \ref{Fig.Type2}, \ref{Fig.Type3}, \ref{Fig.Type4}, \ref{Fig.Type5}. 
\begin{enumerate}
	\item\label{Type1}Type $\uppercase\expandafter{\romannumeral1}$. 
	To decide the possible cases in each type, we need to consider in two aspects: the Reidemeister move $\uppercase\expandafter{\romannumeral3}$ is $r(a,n,a)$ and the other 3 arrows are persistent crossings. 
	\begin{enumerate}
		\item $\alpha = a, \beta = n-a, \gamma = 0$:$P_2$,$\overline{P}_2$ do not contribute.
		By lemma \ref{Lem.marking}, $\infty = 1$. 
		By the first $hm$ condition, in $P_1$ and $\overline{P}_1$ only $hm$ could contribute, i.e., there exists $\phi \in \cup_{i} Hom(A_i,G_p)$ such that  $hm\in S(\phi)$ is the first image arrow from $\infty$. Then $W(P_1)=W(\overline{P}_1)$. 
		We notice that from $G_{P_3}=G_{P_4}$ to $G_{\overline{P}_3}=G_{\overline{P}_4}$ there performs a Reidemeister move $\uppercase\expandafter{\romannumeral3}$ by the three arrows $23, 24, 34$ of the case $2,R$ and that the difference between $G_{P_3}$ and $G_{P_4}$ is the place of $hm$. So do $G_{\overline{P}_3}$ and $G_{\overline{P}_4}$. We define $$(23,24)_{P_3\uppercase\expandafter{\romannumeral1}'[2]} = \sum\limits_{\phi \in \{ \phi \in Hom([2],G_{P_3}) | \{ 23, 24\}\subset S(\phi), \phi \text{ is of the type } \uppercase\expandafter{\romannumeral1}' \} }c_{i(\phi)} sign(\phi).$$ We are considering positive quadruple point, so $sign(\phi)=1$ and $c_{i(\phi)}=1$. Hence $(23,24)_{P_3\uppercase\expandafter{\romannumeral1}'[2]}=\#\{ \phi \in Hom([2],G_{P_3}) | \{ 23, 24\}\subset S(\phi), \phi \text{ is of the type } \uppercase\expandafter{\romannumeral1}'  \}$and we have 
		\begin{equation}
		\begin{aligned}
		R &= W(P_3)+W(P_4)-W(\overline{P}_3)-W(\overline{P}_4)\\
		&= (23,24)_{P_3\uppercase\expandafter{\romannumeral2}'[2]}+(23,24)_{P_3\uppercase\expandafter{\romannumeral2}'[5]}+(24,34)_{P_4\uppercase\expandafter{\romannumeral1}'[2]}\\
		&-[(23,34)_{\overline{P}_3\uppercase\expandafter{\romannumeral3}[5]}+(24,34)_{\overline{P}_4\uppercase\expandafter{\romannumeral1}[3]}+(24,23)_{\overline{P}_4\uppercase\expandafter{\romannumeral2}[3]}]\\
		\end{aligned}
		\end{equation}
		By the bijective map $\theta$ respect to the Reidemeister move $\uppercase\expandafter{\romannumeral3}$ of the three arrows $23, 24, 34$, we have $$(23,24)_{P_3\uppercase\expandafter{\romannumeral2}'[2]}= (24,23)_{\overline{P}_4\uppercase\expandafter{\romannumeral2}[3]}$$
		$$(23,24)_{P_3\uppercase\expandafter{\romannumeral2}'[5]} = (23,34)_{\overline{P}_3\uppercase\expandafter{\romannumeral3}[5]}$$
		$$(24,34)_{P_4\uppercase\expandafter{\romannumeral1}'[2]} = (24,34)_{\overline{P}_4\uppercase\expandafter{\romannumeral1}[3]}$$
		Hence $R=0$. 
		
		\item $\alpha = 0, \beta = n-a, \gamma = 0$: $P_1$,$\overline{P}_1$, $P_2$,$\overline{P}_2$, $P_4$,$\overline{P}_4$ do not contribute. By the first $hm$ condition, in $P_3$ and $\overline{P}_3$ only $hm$ could contribute, i.e., there exists $\phi \in \cup_{i} Hom(A_i,G_p)$ such that  $hm\in S(\phi)$ is the first image arrow from $\infty$. Then $W(P_3)=W(\overline{P}_3)$ and $R=0$. 
		\item $\alpha = 0, \beta = a, \gamma = n-a$: $P_1$,$\overline{P}_1$, $P_3$,$\overline{P}_3$, $P_4$,$\overline{P}_4$ do not contribute. 
		By lemma \ref{Lem.marking}, $\infty = 4$. We notice that from $G_{P_2}$ to $G_{\overline{P}_2}$ there performs a Reidemeister move $\uppercase\expandafter{\romannumeral3}$ of the case $3,R$. $W(P_2) = (34,14)_{P_2\uppercase\expandafter{\romannumeral1}'[1]}+(34,14)_{P_2\uppercase\expandafter{\romannumeral1}'[2]}+(34,14)_{P_2\uppercase\expandafter{\romannumeral1}'[4]}+(34,14)_{P_2\uppercase\expandafter{\romannumeral1}'[6]}.$
		$W(\overline{P_2}) = (34,13)_{\overline{P}_2\uppercase\expandafter{\romannumeral3}[4]}+(34,14)_{\overline{P}_2\uppercase\expandafter{\romannumeral1}[5]}+(34,13)_{\overline{P}_2\uppercase\expandafter{\romannumeral3}[1]}+(34,13)_{\overline{P}_2\uppercase\expandafter{\romannumeral3}[6]}.$
		By the bijective map $\theta$ respective to the Reidemeister move $\uppercase\expandafter{\romannumeral3}$ of the three arrows $34, 13, 14$, $W(P_2) = W(\overline{P_2}) = $ and $R = 0$. 
		\item $\alpha = a, \beta = 0, \gamma = n-a$: $P_3$,$\overline{P}_3$, $P_4$,$\overline{P}_4$ do not contribute. $P_1$ with $\overline{P}_1$ and $P_4$ with $\overline{P}_4$ cancell out by the first $hm$ condition.
	\end{enumerate}

	\item\label{Type2}Type $\uppercase\expandafter{\romannumeral2}$. Only $P_1, \overline{P_1}, P_4, \overline{P}_4$ are of right global type. 
	\begin{enumerate}
		\item $\alpha = a, \beta = 0, \gamma = n-a$: We only need to consider about $\infty = 1$. 
		By the first $hm$ condition, $W(P_4) = W(\overline{P}_4)$. From $P_1$ to $\overline{P}_1$, there is a Reidemeister move $\uppercase\expandafter{\romannumeral3}$ of case $2,L$. We have $W(P_1) = (34,24)_{P_1\uppercase\expandafter{\romannumeral1}[2]}$ and $W(\overline{P}_1) = (34,24)_{\overline{P}_1\uppercase\expandafter{\romannumeral1}'[3]}$. By the one-one contribution correspondence, $W(P_1) = W(\overline{P}_1)$ and hence $R = 0$. 
		\item $\alpha = a, \beta = n-a, \gamma = 0$: $P_1$, $\overline{P}_1$ do not contribute. 
		By the first $hm$ condition, $W(P_4) = W(\overline{P}_4)$ and then $R=0$.
	\end{enumerate}
	
	\item\label{Type3}Type $\uppercase\expandafter{\romannumeral3}$. Only $P_1, \overline{P_1}, P_2, \overline{P}_2$ are of right global type. 
	\begin{enumerate}
		\item $\alpha = 0, \beta = a, \gamma = n-a$: $\infty = 1$ by lemma \ref{Lem.marking}. 
		Due to the first $hm$ condition, $W(P_1) = W(\overline{P}_1)$  and $W(P_2) = W(\overline{P}_2)$. Hence $R=0$. 
		\item $\alpha = n-a, \beta = a, \gamma = 0$: $P_1$, $\overline{P}_1$ do not contribute and $\infty = 1$. 
		By the first $hm$ condition, $W(P_2) = W(\overline{P}_2)$. So $R=0$. 
		\item $\alpha = a, \beta = 0, \gamma = n-a$: $P_2$, $\overline{P}_2$ do not contribute and $\infty = 3$ by lemma \ref{Lem.marking}. 
		From $P_1$ to $\overline{P}_1$, there is a Reidemeister move $\uppercase\expandafter{\romannumeral3}$ of case $3,R$. We have $W(P_1) = (34,24)_{P_1\uppercase\expandafter{\romannumeral1}[5]}+(34,23)_{P_1\uppercase\expandafter{\romannumeral3}[1]}+(34,23)_{P_1\uppercase\expandafter{\romannumeral3}[4]}+(34,23)_{P_1\uppercase\expandafter{\romannumeral3}[6]} = (34,24)_{\overline{P}_1\uppercase\expandafter{\romannumeral1}'[2]}+(34,24)_{\overline{P}_1\uppercase\expandafter{\romannumeral1}'[4]}+(34,24)_{\overline{P}_1\uppercase\expandafter{\romannumeral1}'[1]}+(34,24)_{\overline{P}_1\uppercase\expandafter{\romannumeral1}'[6]}= W(\overline{P}_1)$. So $R=0$. 
	\end{enumerate}
	\item\label{Type4}Type $\uppercase\expandafter{\romannumeral4}$. Only $P_2, \overline{P_2}, P_3, \overline{P}_3$ are of right global type. 
	\begin{enumerate}
		\item $\alpha = 0, \beta = a, \gamma = 0$:  $P_3$ and $\overline{P}_3$ do not contribute and $\infty = 1$. 
		From $P_2$ to $\overline{P}_2$, there is a Reidemeister move $\uppercase\expandafter{\romannumeral3}$ of case $2,L$. We have $W(P_2) = (34,14)_{P_2\uppercase\expandafter{\romannumeral1}'[3]} = (34,14)_{\overline{P}_2\uppercase\expandafter{\romannumeral1}[2]} = W(\overline{P}_2)$. So $R = 0$.  
		\item $\alpha = a, \beta = 0, \gamma = n-a$: $P_2$ and $\overline{P}_2$ do not contribute and $\infty = 2$. $W(P_3) = W(\overline{P}_2)$ by the first $hm$ condition and hence $R=0$.
		\item $\alpha = a, \beta = n-a, \gamma = 0$: $P_2$ and $\overline{P}_2$ do not contribute and $\infty = 2$. $W(P_3) = W(\overline{P}_2)$ by the first $hm$ condition and hence $R=0$.
	\end{enumerate}
	
	\item\label{Type5}Type $\uppercase\expandafter{\romannumeral5}$. Only $P_3, \overline{P_3}, P_4, \overline{P}_4$ are of right global type. 
	From $G_{P_3}=G_{P_4}$ to $G_{\overline{P}_3}=G_{\overline{P}_4}$ there performs a Reidemeister move $\uppercase\expandafter{\romannumeral3}$ by the three arrows $23, 24, 34$ of the case $3,L$. 
	\begin{enumerate}
		\item $\alpha = a, \beta = 0, \gamma = n-a$: We only need to consider $\infty=2$. 
		We have 
		\begin{equation}
		\begin{aligned}
		R &= W(P_3)+W(P_4)-W(\overline{P}_3)-W(\overline{P}_4)\\
		&= (23,24)_{P_3\uppercase\expandafter{\romannumeral2}'[3]}+(23,34)_{P_3\uppercase\expandafter{\romannumeral3}'[1]}+(23,34)_{P_3\uppercase\expandafter{\romannumeral3}'[3]}+(23,34)_{P_3\uppercase\expandafter{\romannumeral3}'[4]}+(24,34)_{P_4\uppercase\expandafter{\romannumeral1}'[5]}+(23,34)_{P_3\uppercase\expandafter{\romannumeral3}'[6]}\\
		&-[0+(24,34)_{\overline{P}_4\uppercase\expandafter{\romannumeral1}[1]}+(24,34)_{\overline{P}_4\uppercase\expandafter{\romannumeral1}[2]}+(24,34)_{\overline{P}_4\uppercase\expandafter{\romannumeral1}[4]}+(24,34)_{\overline{P}_4\uppercase\expandafter{\romannumeral2}[2]}+(24,34)_{\overline{P}_4\uppercase\expandafter{\romannumeral2}[5]}+(24,34)_{\overline{P}_4\uppercase\expandafter{\romannumeral1}[6]}]\\
		&=0
		\end{aligned}
		\end{equation}
		\item $\alpha = 0, \beta = a, \gamma = n-a$: $P_3$ and $\overline{P}_3$ do not contribute and $\infty = 1$ by lemma $\ref{Lem.marking}$. 
		By the first $hm$ condition, $W(P_4) = W(\overline{P}_4)$ and $R=0$
	\end{enumerate} 
\end{enumerate}
Tetrahedron equations are proved. 

\begin{figure}[H]
	\begin{subfigure}{0.26\textwidth}
		\includegraphics[width=0.9\textwidth]{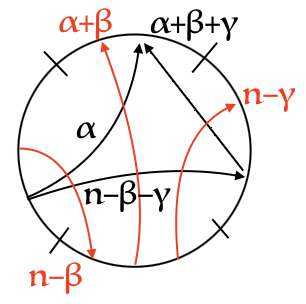} 
		\caption{-$P_1$} 
		\label{Fig.T1P1} 
	\end{subfigure}
	\begin{subfigure}{0.26\textwidth}
		\includegraphics[width=0.9\textwidth]{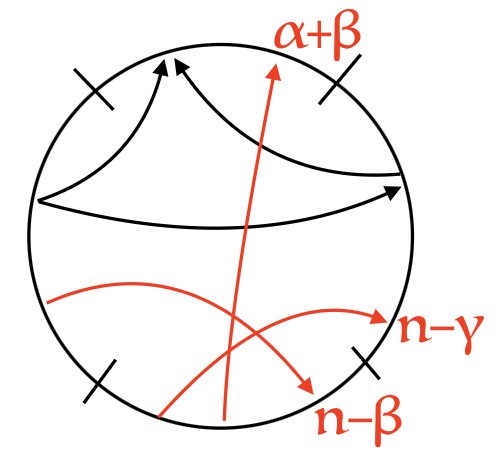} 
		\caption{$+\overline{P}_1$}
		\label{Fig.T1P'1} 
	\end{subfigure}
	\begin{subfigure}{0.26\textwidth}
		\includegraphics[width=\textwidth]{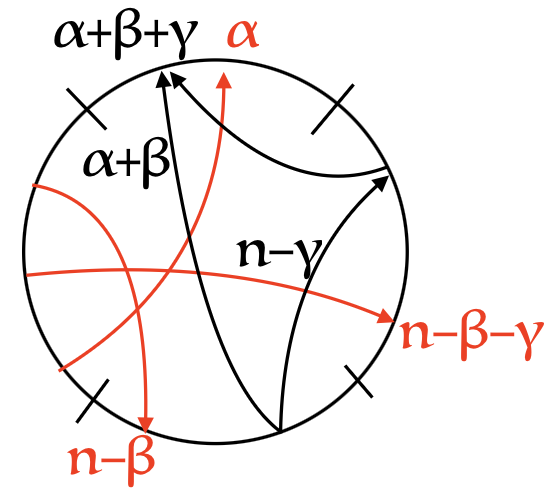} 
		\caption{-$P_2$} 
		\label{Fig.T1P2} 
	\end{subfigure}
	\begin{subfigure}{0.26\textwidth}
		\includegraphics[width=0.9\textwidth]{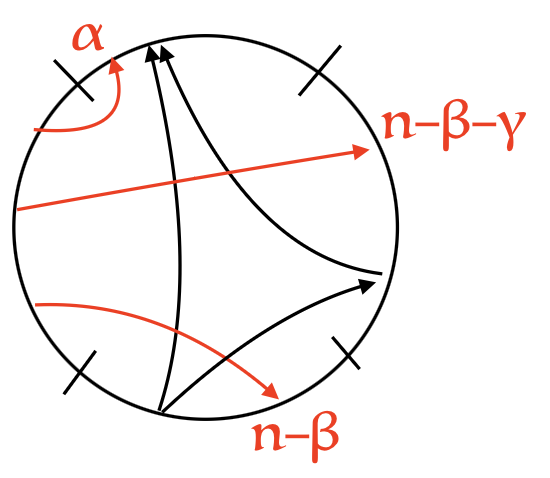} 
		\caption{$+\overline{P}_2$}
		\label{Fig.T1P'2} 
	\end{subfigure}
	\begin{subfigure}{0.26\textwidth}
		\includegraphics[width=0.9\textwidth]{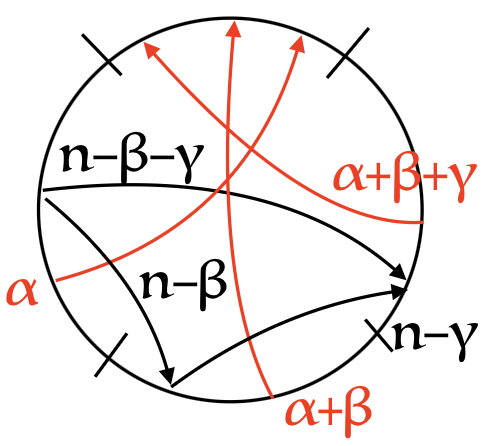} 
		\caption{+$P_3$} 
		\label{Fig.T1P3} 
	\end{subfigure}
	\begin{subfigure}{0.26\textwidth}
		\includegraphics[width=0.9\textwidth]{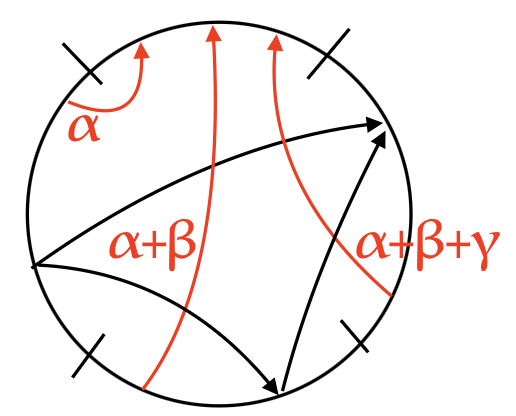} 
		\caption{$-\overline{P}_3$}
		\label{Fig.T1P'3} 
	\end{subfigure}
	\begin{subfigure}{0.26\textwidth}
		\includegraphics[width=0.9\textwidth]{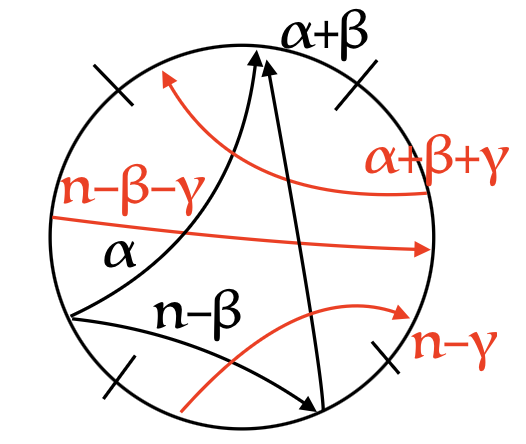} 
		\caption{+$P_4$} 
		\label{Fig.T1P4} 
	\end{subfigure}
	\begin{subfigure}{0.26\textwidth}
		\includegraphics[width=0.9\textwidth]{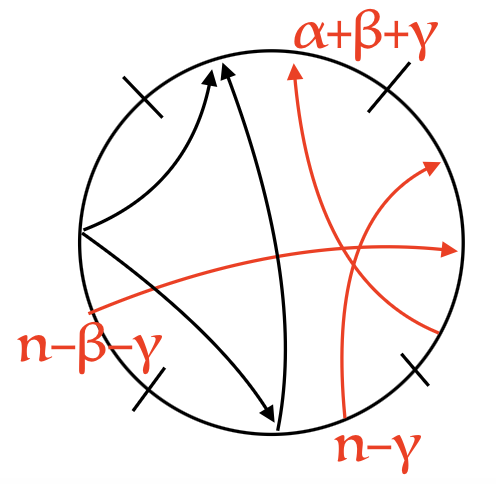} 
		\caption{$-\overline{P}_4$}
		\label{Fig.T1P'4} 
	\end{subfigure}
	\begin{subfigure}{0.26\textwidth}
		\includegraphics[width=1.15\textwidth]{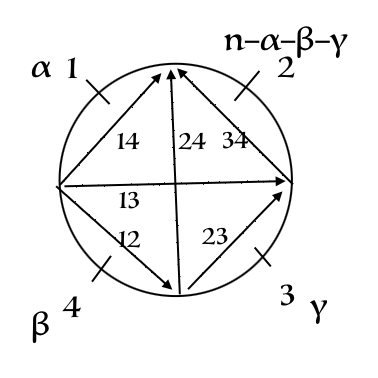} 
		\caption{Global type \uppercase\expandafter{\romannumeral1}}
		\label{Fig.T1} 
	\end{subfigure}
	\caption{Type $\uppercase\expandafter{\romannumeral1}$}
	\label{Fig.Type1}
\end{figure}

\begin{figure}[H]
	\begin{subfigure}{0.25\textwidth}
		\includegraphics[width=0.9\textwidth]{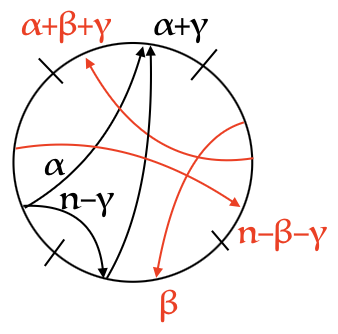} 
		\caption{-$P_1$} 
		\label{Fig.T2P1} 
	\end{subfigure}
	\begin{subfigure}{0.25\textwidth}
		\includegraphics[width=0.9\textwidth]{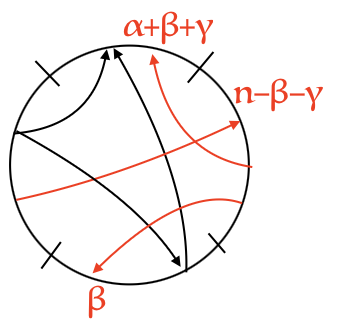} 
		\caption{$+\overline{P}_1$}
		\label{Fig.T2P'1} 
	\end{subfigure}
	\begin{subfigure}{0.25\textwidth}
		\includegraphics[width=0.9\textwidth]{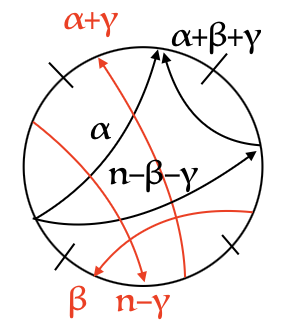} 
		\caption{+$P_4$} 
		\label{Fig.T2P4} 
	\end{subfigure}
	\begin{subfigure}{0.25\textwidth}
		\includegraphics[width=0.9\textwidth]{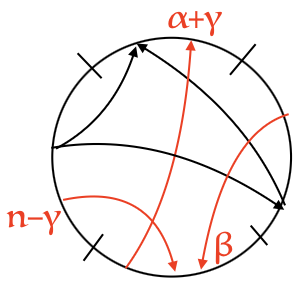} 
		\caption{$-\overline{P}_4$}
		\label{Fig.T2P'4} 
	\end{subfigure}
	\begin{subfigure}{0.25\textwidth}
		\includegraphics[width=0.9\textwidth]{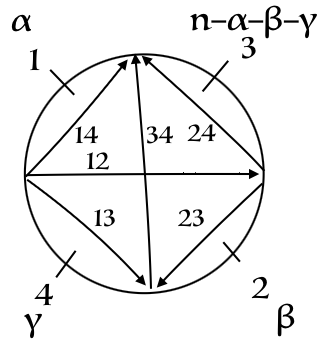} 
		\caption{Global type \uppercase\expandafter{\romannumeral2}}
		\label{Fig.T2} 
	\end{subfigure}
	\caption{Type $\uppercase\expandafter{\romannumeral2}$}
	\label{Fig.Type2}
\end{figure}

\begin{figure}[H]
	\begin{subfigure}{0.25\textwidth}
		\includegraphics[width=0.9\textwidth]{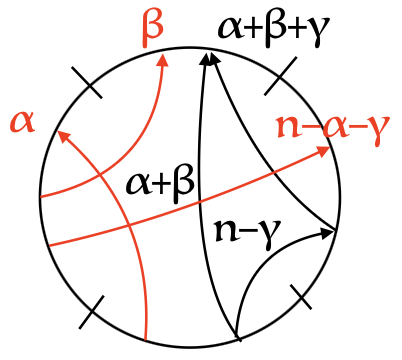} 
		\caption{-$P_1$} 
		\label{Fig.T3P1} 
	\end{subfigure}
	\begin{subfigure}{0.25\textwidth}
		\includegraphics[width=0.9\textwidth]{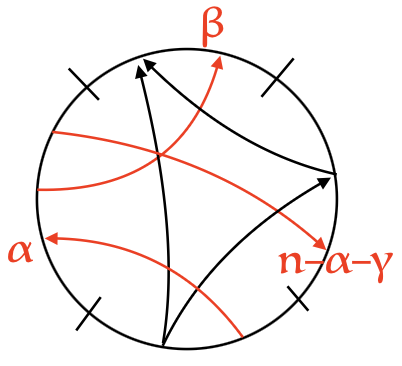} 
		\caption{$+\overline{P}_1$}
		\label{Fig.T3P'1} 
	\end{subfigure}
	\begin{subfigure}{0.25\textwidth}
		\includegraphics[width=0.9\textwidth]{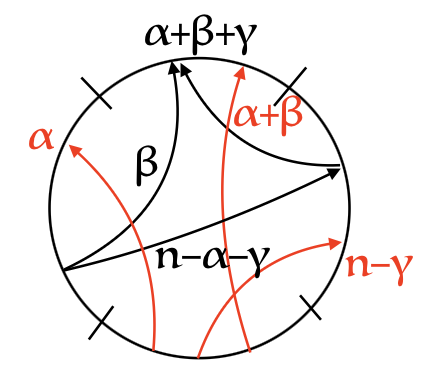} 
		\caption{-$P_2$} 
		\label{Fig.T3P2} 
	\end{subfigure}
	\begin{subfigure}{0.25\textwidth}
		\includegraphics[width=0.9\textwidth]{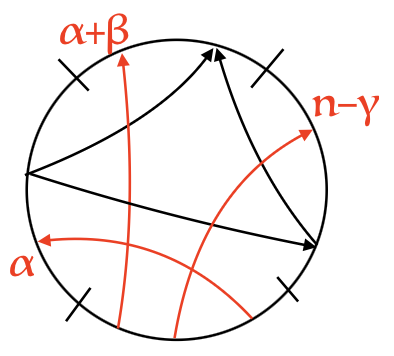} 
		\caption{$+\overline{P}_2$}
		\label{Fig.T3P'2} 
	\end{subfigure}
	\begin{subfigure}{0.25\textwidth}
		\includegraphics[width=0.9\textwidth]{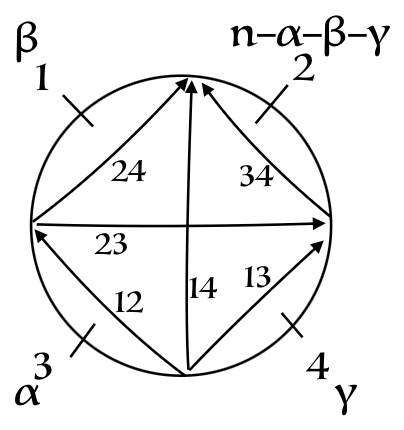} 
		\caption{Global type \uppercase\expandafter{\romannumeral3}}
		\label{Fig.T3} 
	\end{subfigure}
	\caption{Type $\uppercase\expandafter{\romannumeral3}$}
	\label{Fig.Type3}
\end{figure}

\begin{figure}[H]
	\begin{subfigure}{0.25\textwidth}
		\includegraphics[width=0.9\textwidth]{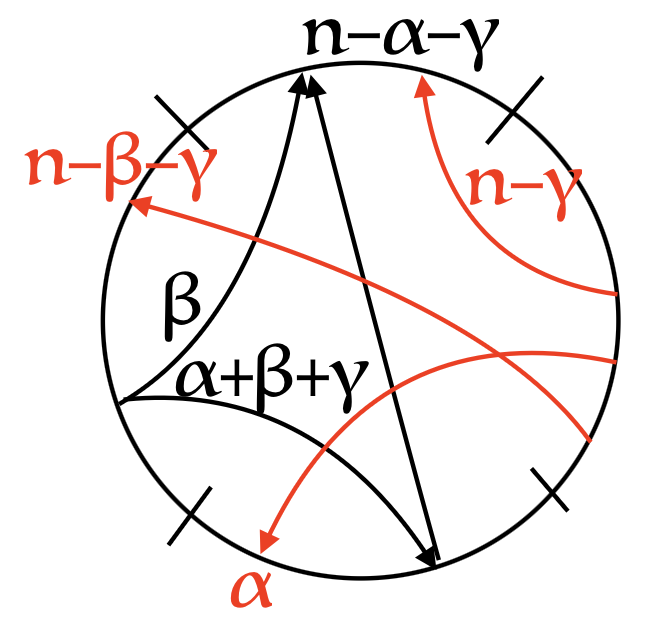} 
		\caption{-$P_2$} 
		\label{Fig.T4P2} 
	\end{subfigure}
	\begin{subfigure}{0.25\textwidth}
		\includegraphics[width=0.9\textwidth]{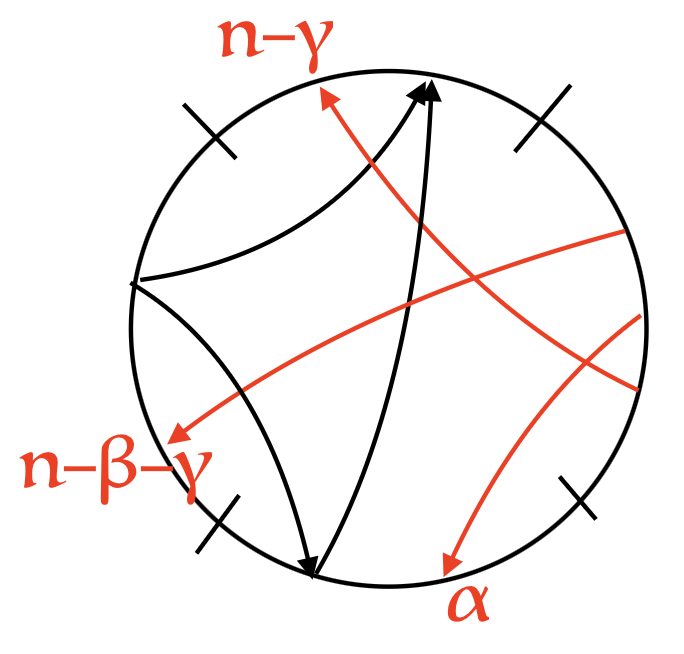} 
		\caption{$+\overline{P}_2$}
		\label{Fig.T4P'2} 
	\end{subfigure}
	\begin{subfigure}{0.25\textwidth}
		\includegraphics[width=0.9\textwidth]{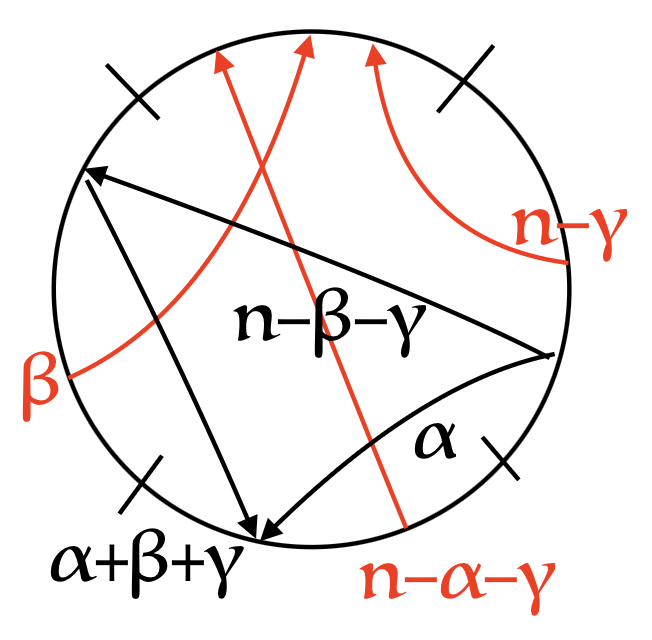} 
		\caption{+$P_3$} 
		\label{Fig.T4P3} 
	\end{subfigure}
	\begin{subfigure}{0.25\textwidth}
		\includegraphics[width=0.9\textwidth]{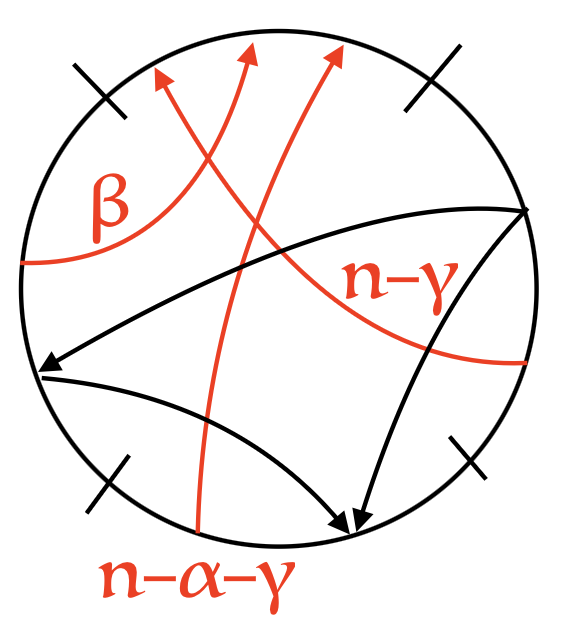} 
		\caption{$-\overline{P}_3$}
		\label{Fig.T4P'3} 
	\end{subfigure}
	\begin{subfigure}{0.25\textwidth}
		\includegraphics[width=0.9\textwidth]{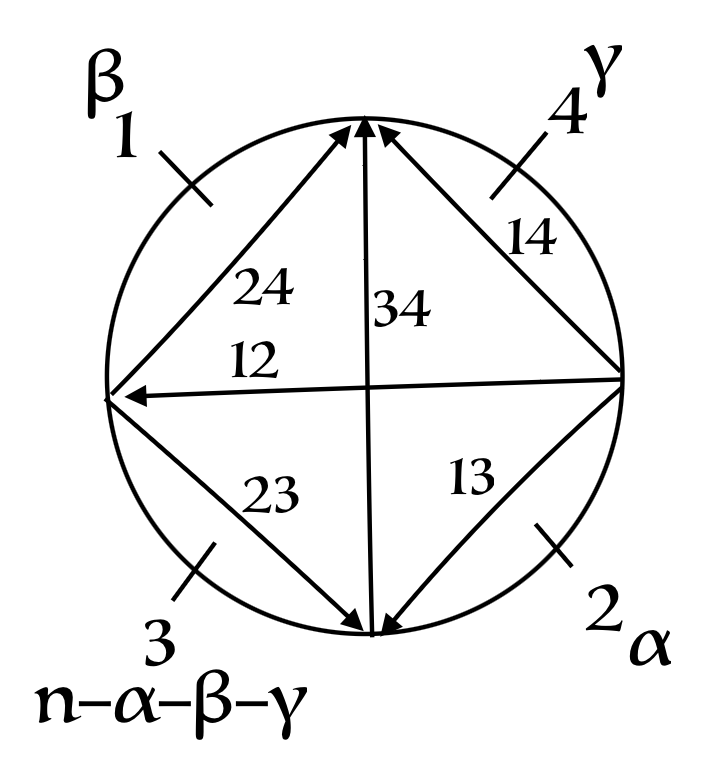} 
		\caption{Global type \uppercase\expandafter{\romannumeral4}}
		\label{Fig.T4} 
	\end{subfigure}
	\caption{Type $\uppercase\expandafter{\romannumeral4}$}
	\label{Fig.Type4}
\end{figure}

\begin{figure}[H]
	\begin{subfigure}{0.25\textwidth}
		\includegraphics[width=0.9\textwidth]{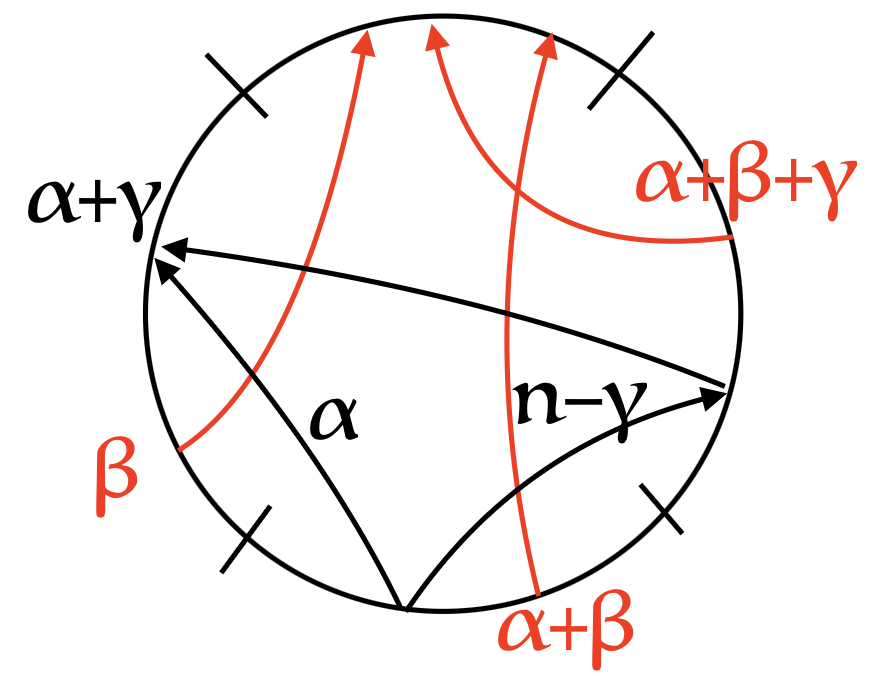} 
		\caption{+$P_3$} 
		\label{Fig.T5P3} 
	\end{subfigure}
	\begin{subfigure}{0.25\textwidth}
		\includegraphics[width=0.9\textwidth]{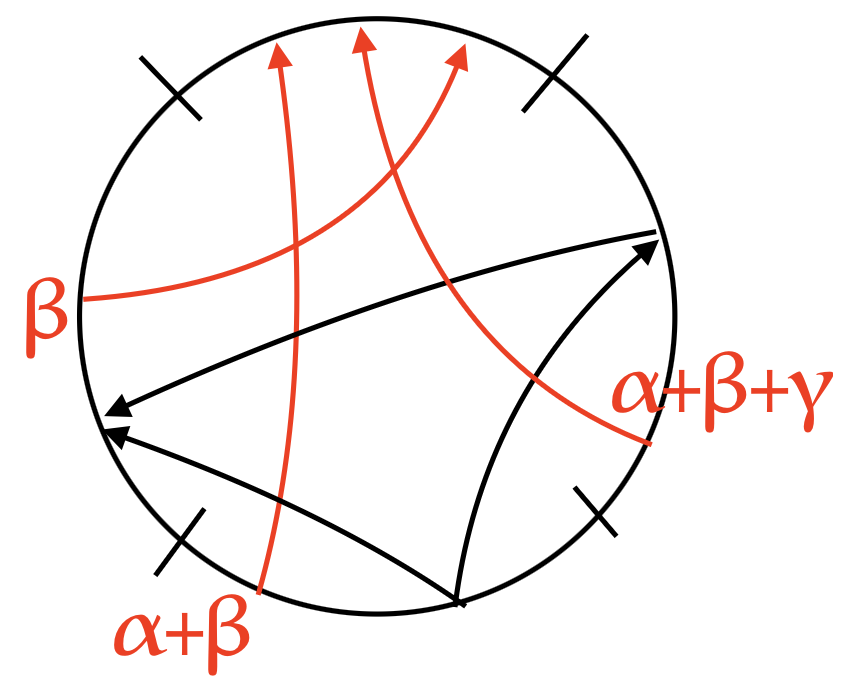} 
		\caption{$-\overline{P}_3$}
		\label{Fig.T5P'3} 
	\end{subfigure}
	\begin{subfigure}{0.25\textwidth}
		\includegraphics[width=0.9\textwidth]{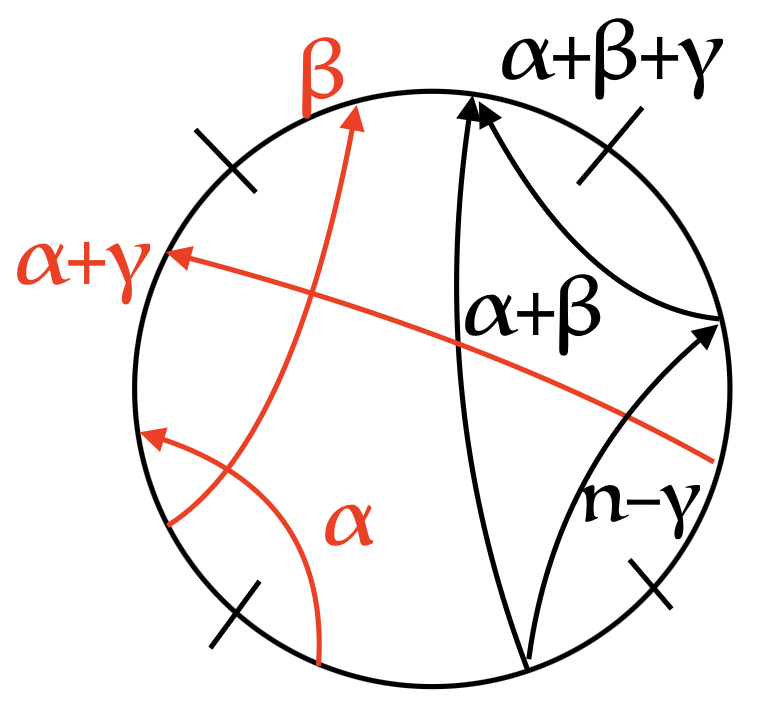} 
		\caption{+$P_4$} 
		\label{Fig.T5P4} 
	\end{subfigure}
	\begin{subfigure}{0.25\textwidth}
		\includegraphics[width=0.9\textwidth]{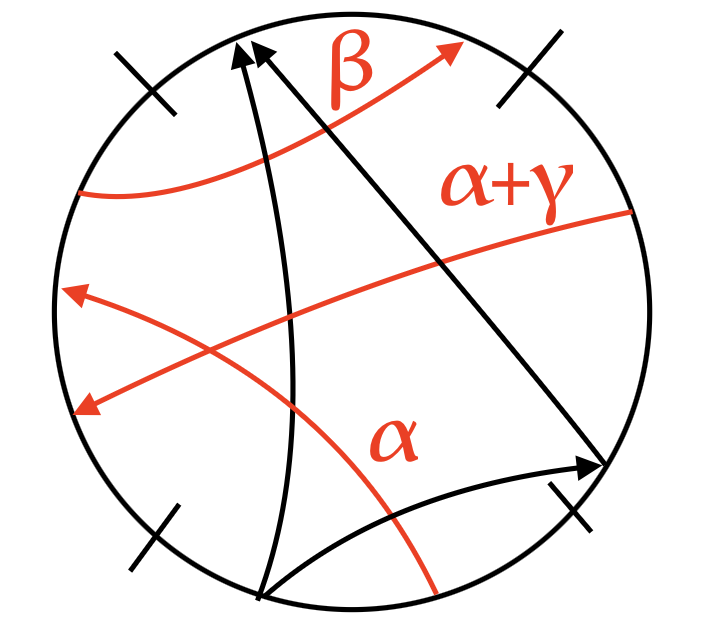} 
		\caption{$-\overline{P}_4$}
		\label{Fig.T5P'4} 
	\end{subfigure}
	\begin{subfigure}{0.25\textwidth}
		\includegraphics[width=0.9\textwidth]{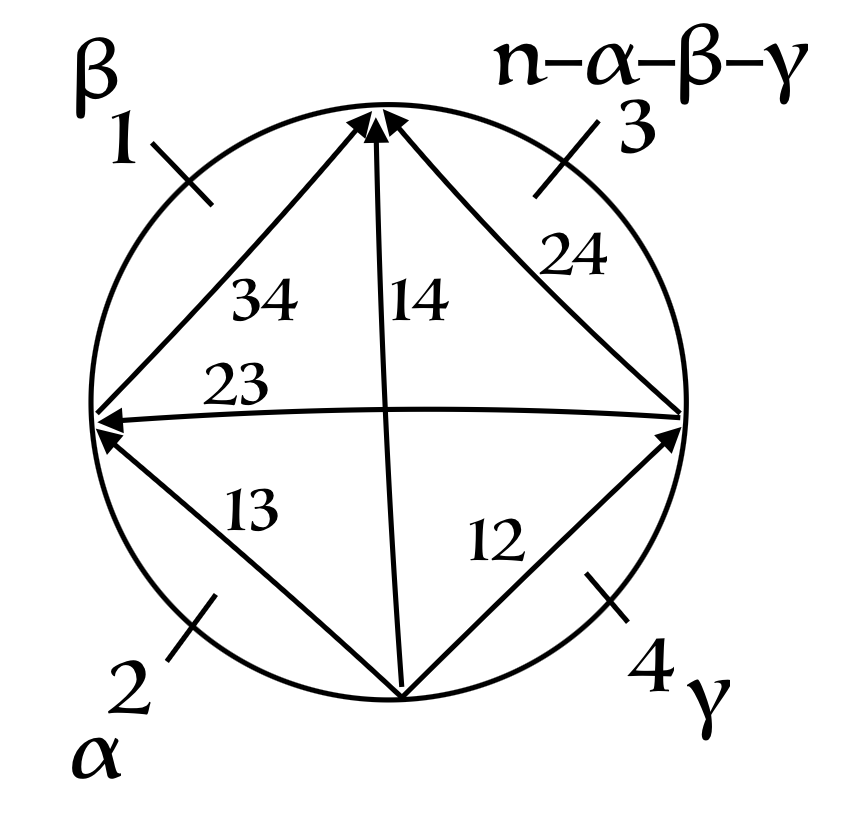} 
		\caption{Global type \uppercase\expandafter{\romannumeral5}}
		\label{Fig.T5} 
	\end{subfigure}
	\caption{Type $\uppercase\expandafter{\romannumeral5}$}
	\label{Fig.Type5}
\end{figure}

\begin{rem}
The neighbourhoods near the 4 points on each circle are not decided while the 4 arcs subtracting the 4 neighbourhood are exactly what they look like. 
\end{rem}


\section{Examples}\label{s8}

One goal of these 1-cocycles is to find new knot invariants. The general method, explained in more details in \cite{4, 5}, consists in associating, to each knot in $M_n$, a canonical loop in $M_n$, based on this knot, and then evaluating the 1-cocycle on this loop. These canonical loops are only defined on some of the knots in $M_n$. Therefore, the first step consists in taking a knot in the usual sense, and sending it to a "nice knot" in $M_n$, i.e. a knot on which the canonical loops are defined.
Let us start with a long knot $K'$. We take $V'$ a tubular neighborhood of $K'$. We can then consider a set of $n$ lines in an usual cylinder $V$, so that an homeomorphism between $V$ and $V'$ sends those $n$ lines to $n$ stacked copies of $K'$ in the space, and we note $nK'$ the result. To obtain a knot we use an $n$-braid  $\sigma$ that induces a cyclical permutation on its strands to close this link : we glue $nK$ to $\sigma$, and then close the result to a knot in the solid torus. This knot is noted $K = \sigma \cup nK'$. By hypothesis this knot is in $M_n$. For example see (16) in Figure \ref{Fig.push}.

With $K = \sigma \cup nK'$, we note $\mbox{push}(nK',\sigma)$ the loop based on $K$ obtained by pushing $\sigma$ through the cable around the torus, until its starting point. Such a loop is depicted in Figure \ref{Fig.push} with $n=2$ and $K$ a right-handed trefoil. 

We can easily compute our 1-cocycle valuated on the loop above. For $n=2$ $K$ a right-handed trefoil (and therefore $a=1$). In this case, there is only one RIII move that is counted towards $R$ (the move from (3) to (4)).
For this move the persistent Gauss diagram is:
\begin{figure}[H]
	\centering
	\includegraphics[width=0.4\textwidth]{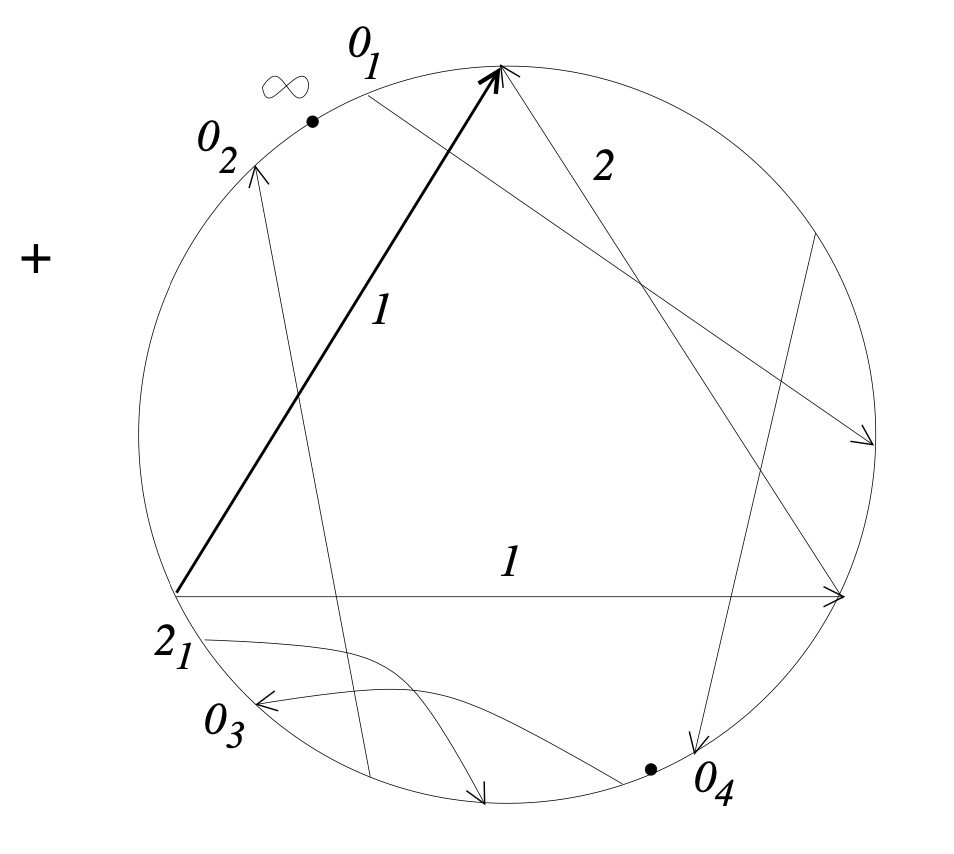}
	\caption{Contributing Reidemeister move III}
	\label{Fig.R3contributing}
\end{figure}
Hence $R(push(2-\mbox{right-handed trefoil},\sigma) = 1$ by the definition of the 1-cocycle. A similar calculation shows that $R(push(2-\mbox{left-handed trefoil},\sigma) = -1$.
In fact, for any long knot $K$ we have $R(push(2K,\sigma_1))=v_3(K)$, which will be discussed in detail in the Ph.D. thesis of the second author (also see \cite{6}).
\begin{figure}[h]
  \centering
  \begin{tikzpicture}
    
    \node[inner sep=0pt] at (-8,6) {\includegraphics[width = 0.35 \textwidth]{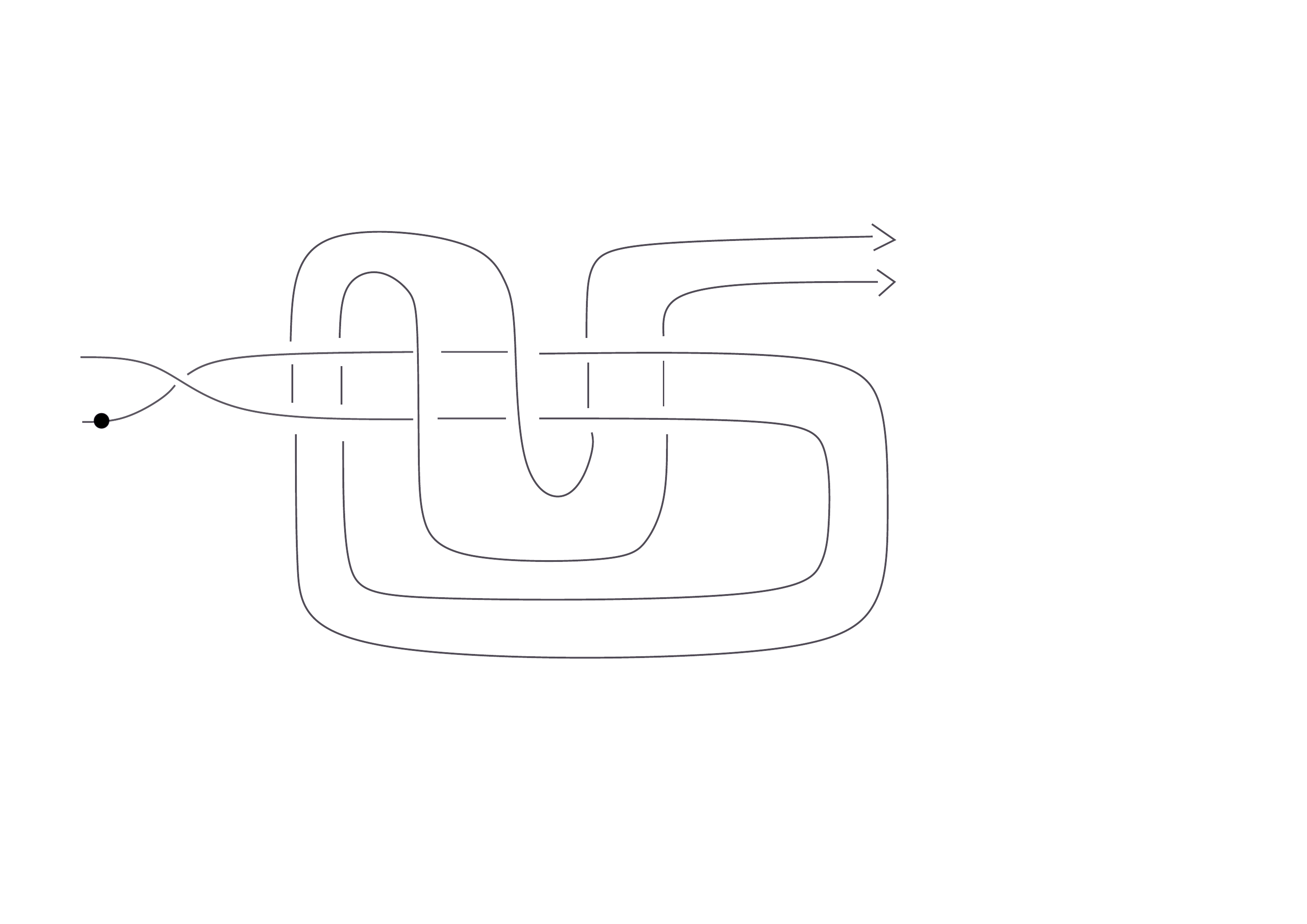}}; 
    \draw(-8.4,4.7) node {(1)};
    \draw[->,very thick] (-6.8,6) -- (-6.1,6) node[right] {};

    \node[inner sep=0pt] at (-3.6,6) {\includegraphics[width = 0.35 \textwidth]{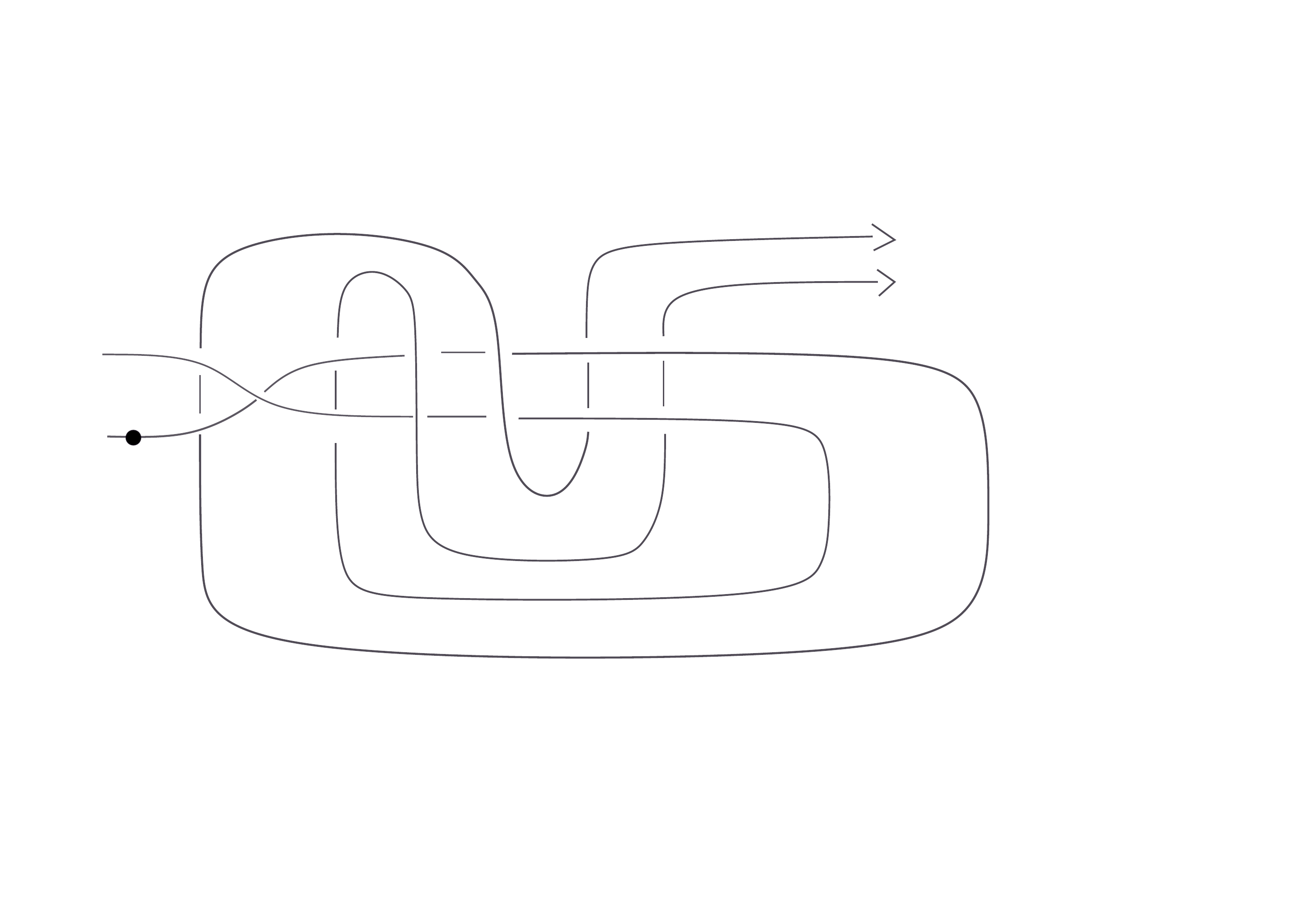}};
    \draw(-4,4.7) node {(2)};
    \draw[->,very thick] (-2.1,6) -- (-1.3,6) node[right] {};
    
    \node[inner sep=0pt] at (1,6) {\includegraphics[width = 0.35 \textwidth]{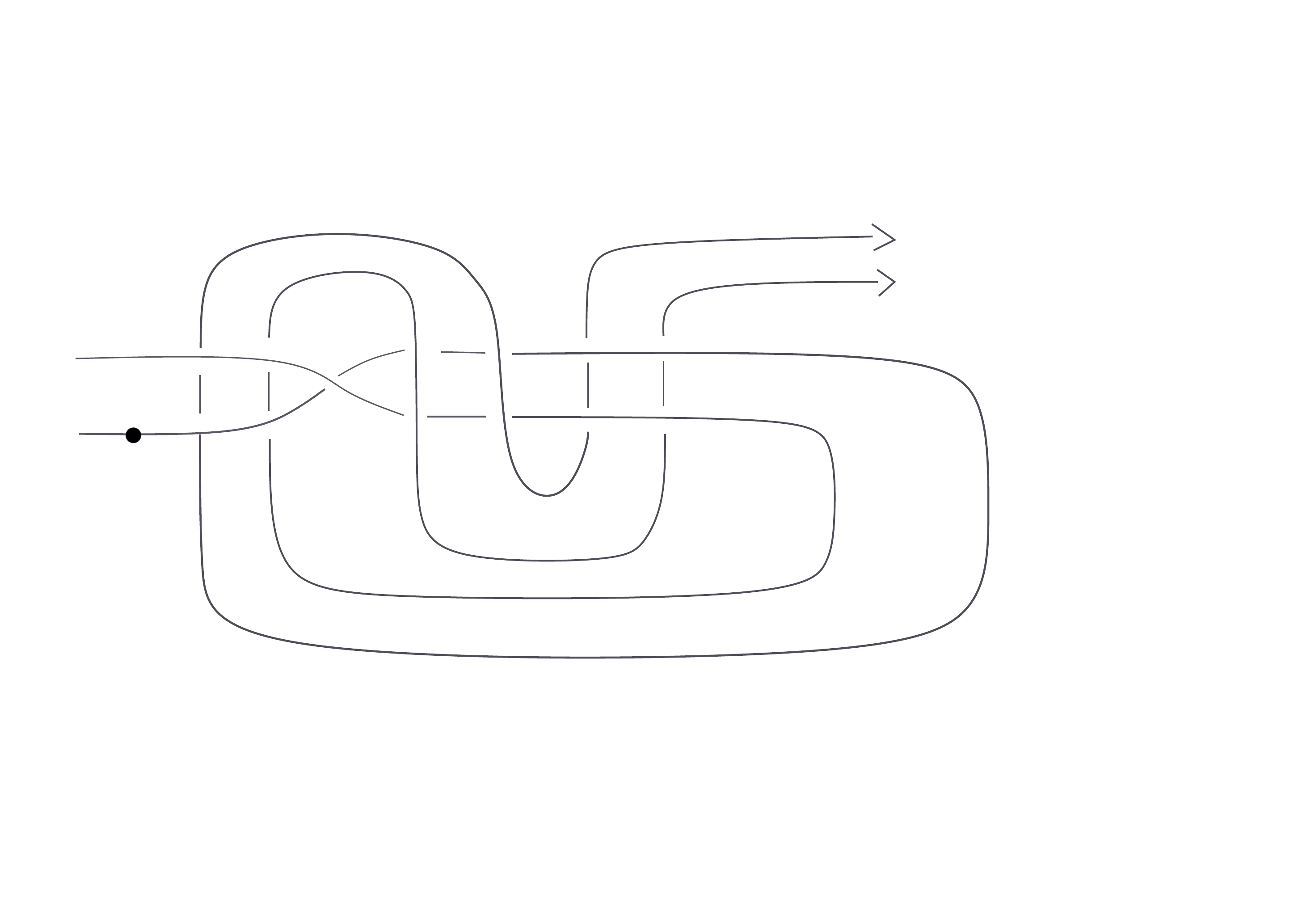}};
    \draw(0.6,4.7) node {(3)};
    \draw[->,very thick] (2.5,6) -- (3.2,6) node[right] {};
    
    \node[inner sep=0pt] at (5.5,6) {\includegraphics[width = 0.35 \textwidth]{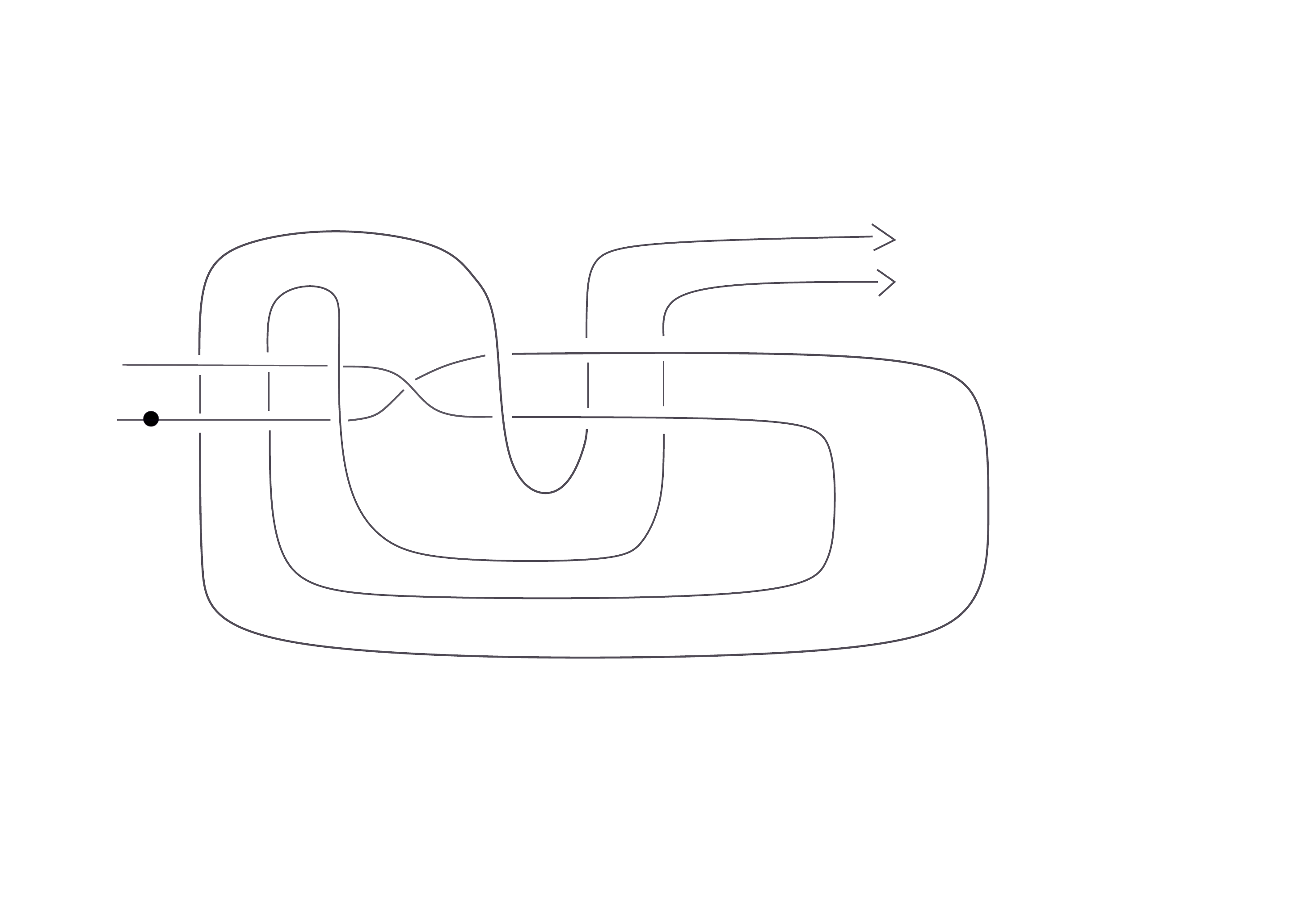}};
    \draw(5.3,4.7) node {(4)};
    \draw[->,very thick] (7.1,6) -- (7.7,6) node[right] {};
    \node[inner sep=0pt] at (-8,3) {\includegraphics[width = 0.35 \textwidth]{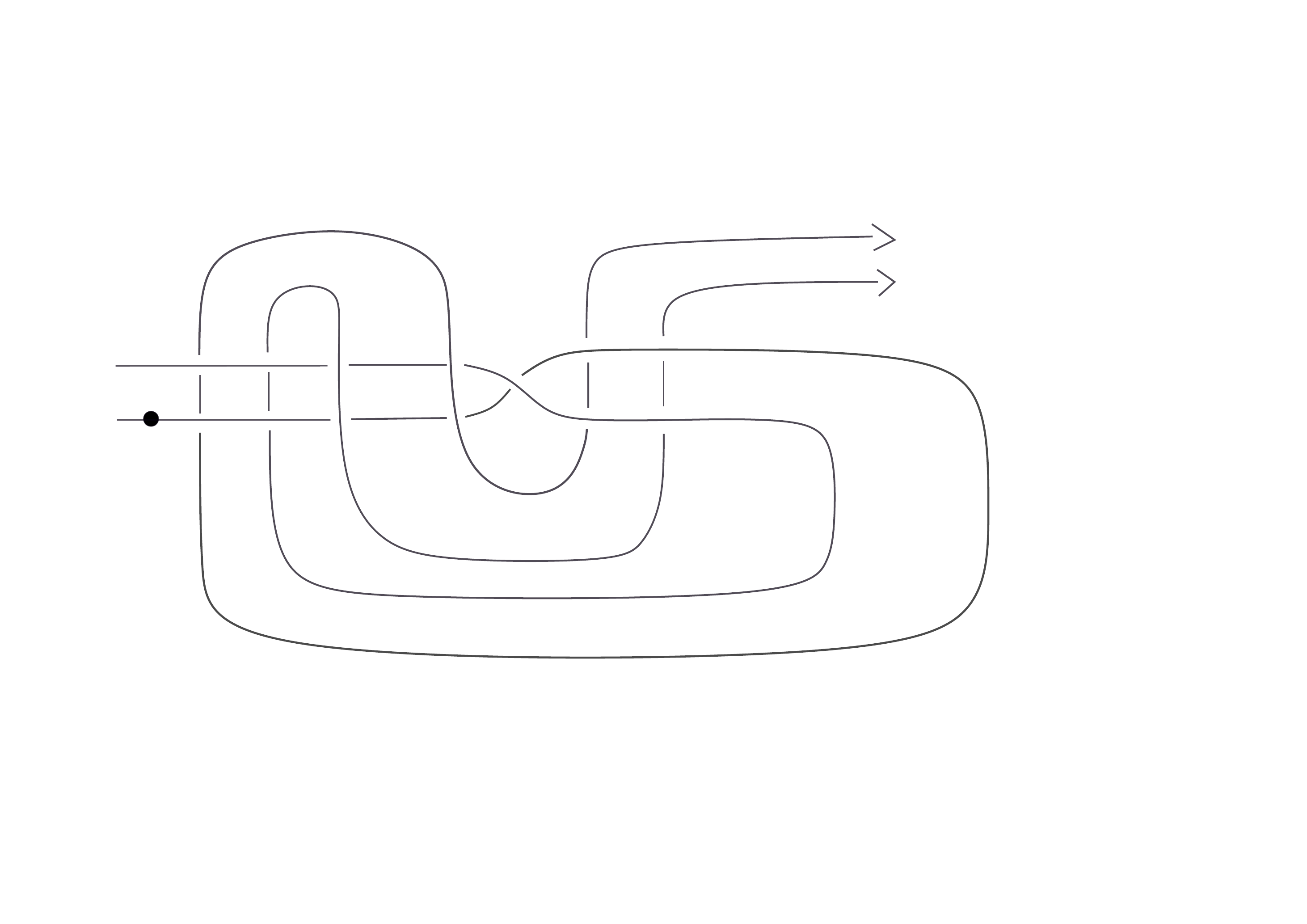}};
    \draw(-8.4,1.7) node {(5)};
    \draw[->,very thick] (-6.6,3) -- (-5.9,3) node[right] {};
    
    \node[inner sep=0pt] at (-3.6,3) {\includegraphics[width = 0.35 \textwidth]{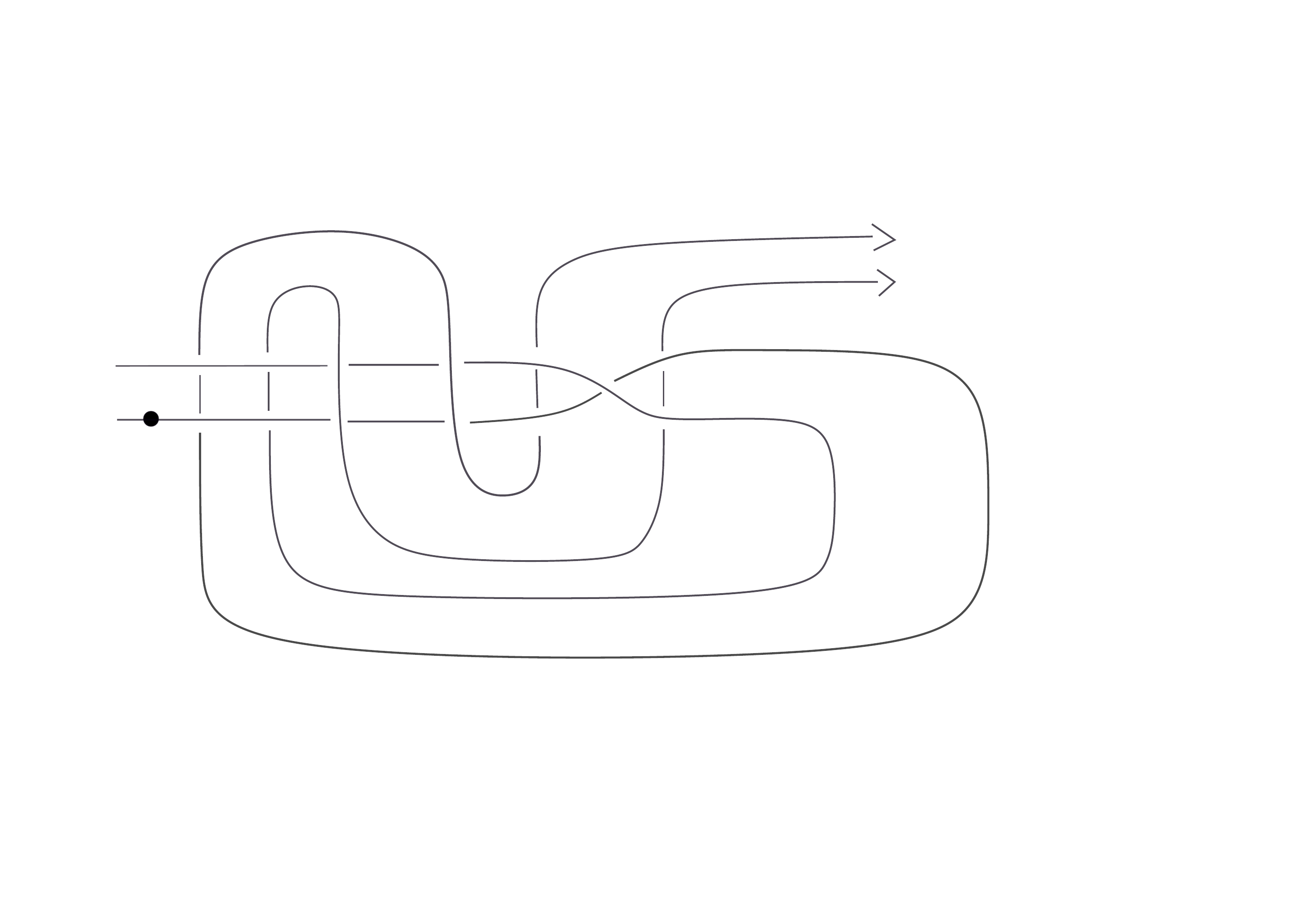}};
    \draw(-4,1.7) node {(6)};
    \draw[->,very thick] (-2.1,3) -- (-1.3,3) node[right] {};
    
    \node[inner sep=0pt] at (1,3) {\includegraphics[width = 0.35 \textwidth]{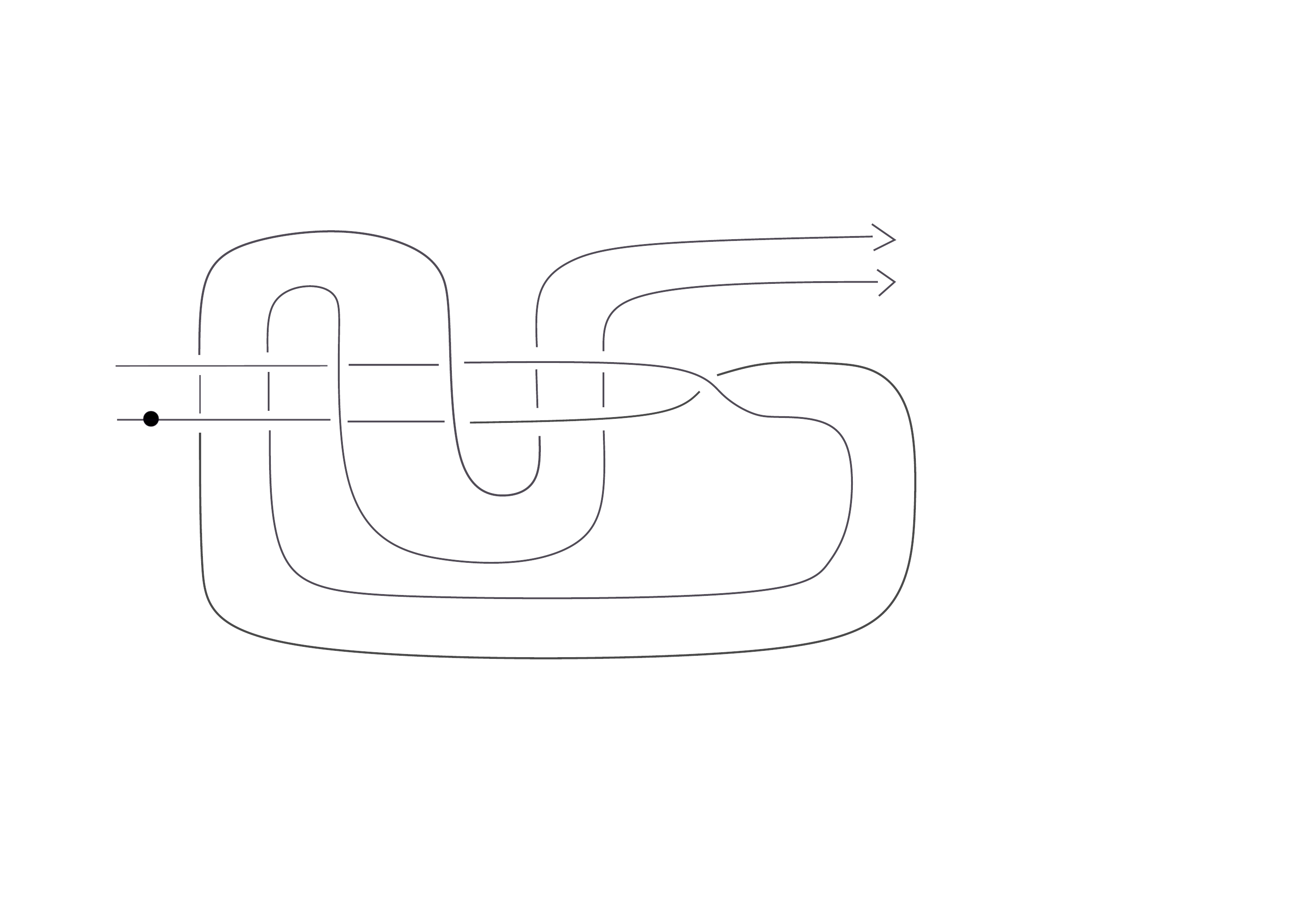}};
    \draw(0.6,1.7) node {(7)};
    \draw[->,very thick] (2.5,3) -- (3.2,3) node[right] {};
    
    \node[inner sep=0pt] at (5.7,3.2) {\includegraphics[width = 0.35 \textwidth]{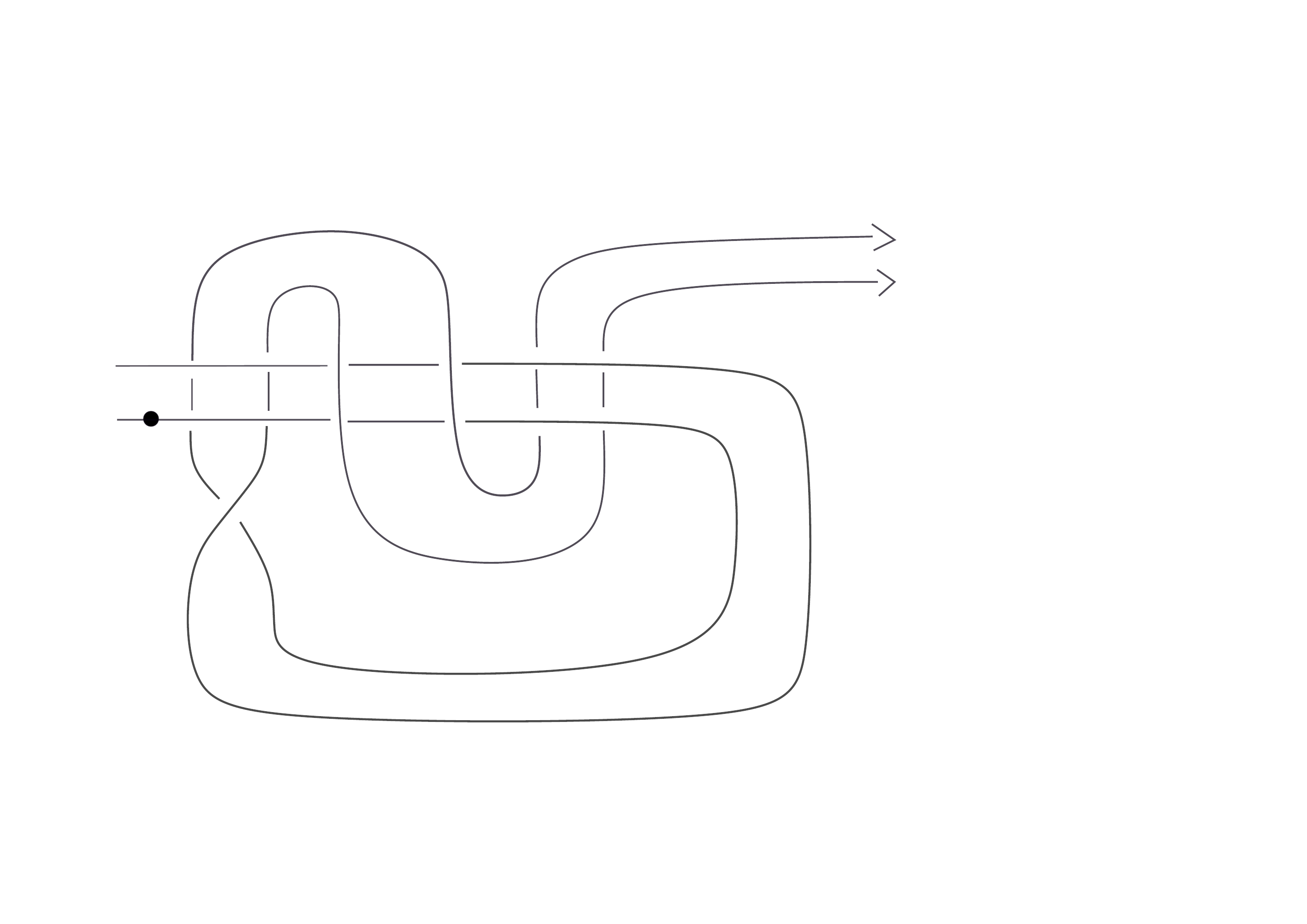}};
    \draw(5.3,1.7) node {(8)};
    \draw[->,very thick] (7.1,3) -- (7.7,3) node[right] {};
    \node[inner sep=0pt] at (-8,0) {\includegraphics[width = 0.35 \textwidth]{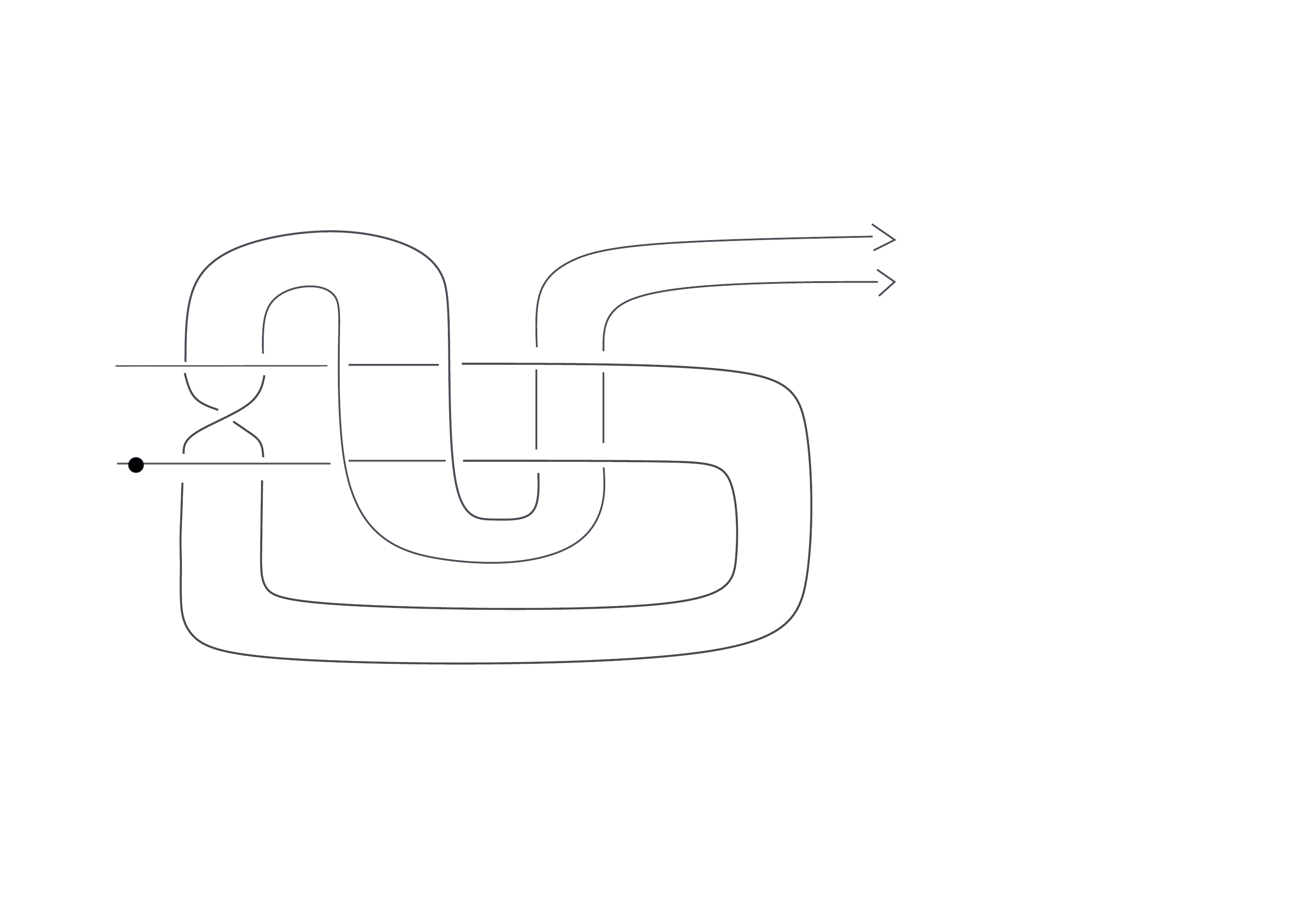}};
    \draw(-8.4,-1.3) node {(9)};
    \draw[->,very thick] (-6.8,0) -- (-6.1,0) node[right] {};
    
    \node[inner sep=0pt] at (-3.6,0) {\includegraphics[width = 0.35 \textwidth]{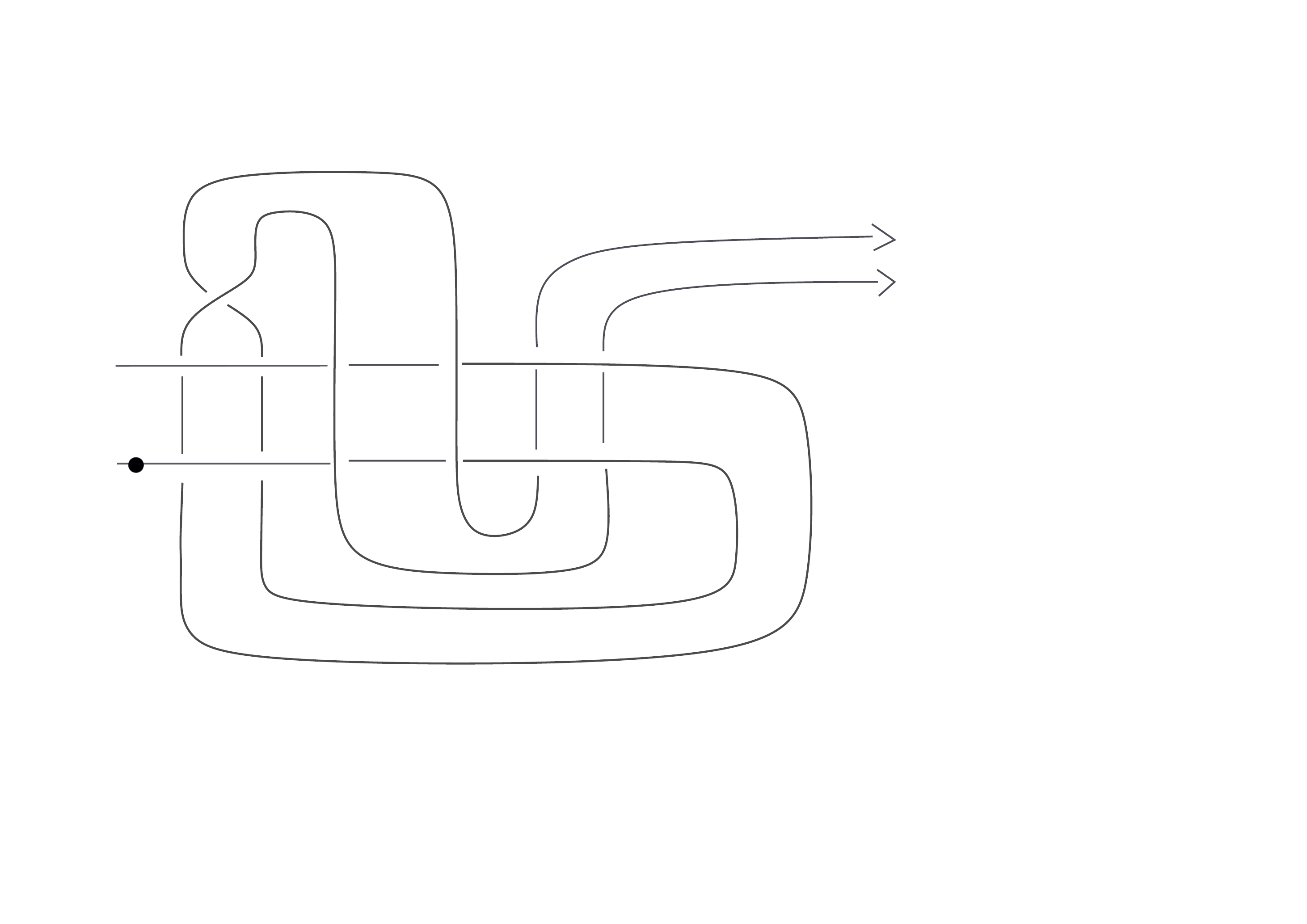}};
    \draw(-4,-1.3) node {(10)};
    \draw[->,very thick] (-2.3,0) -- (-1.6,0) node[right] {};
    
    \node[inner sep=0pt] at (1,0) {\includegraphics[width = 0.35 \textwidth]{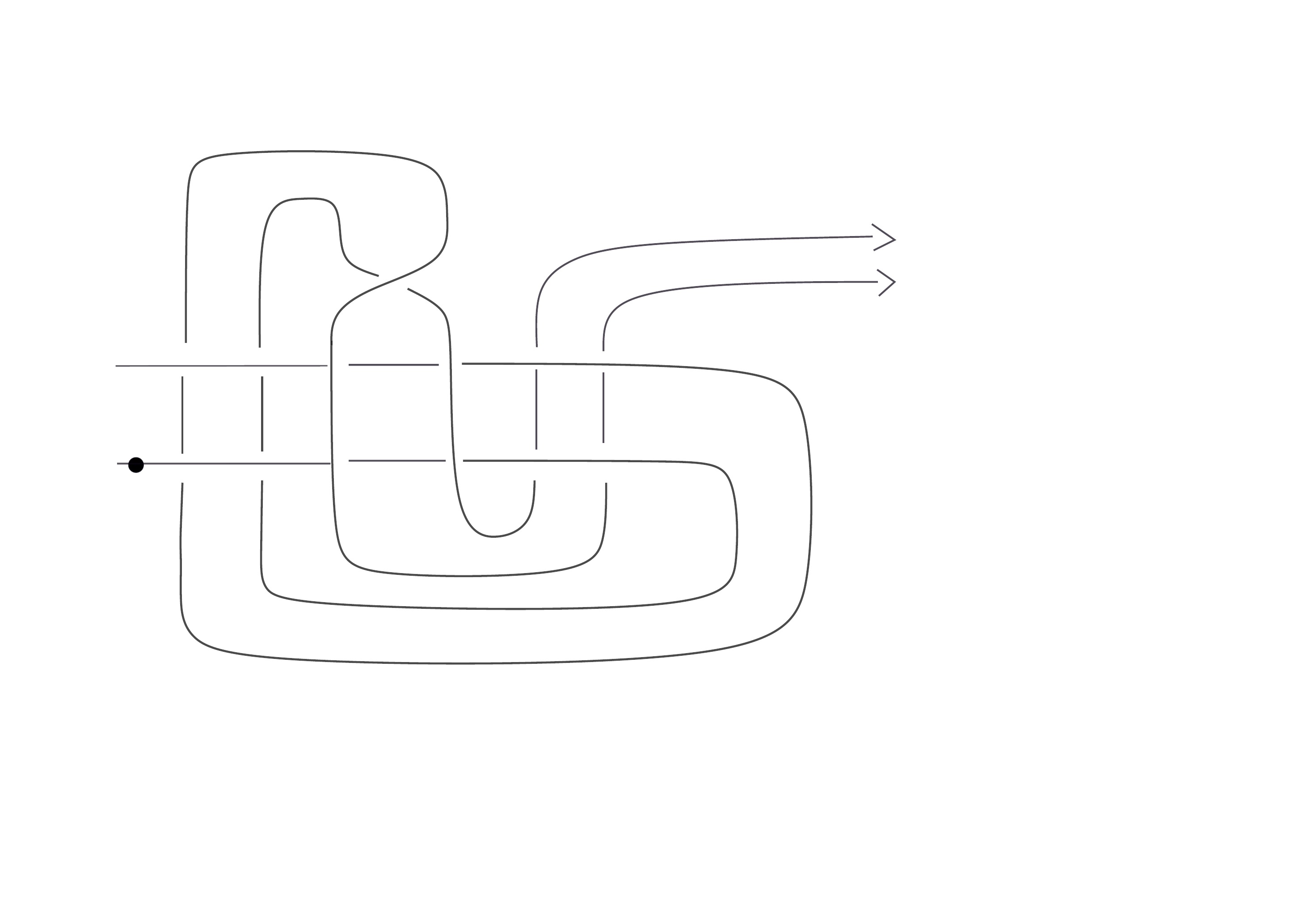}};
    \draw(0.6,-1.3) node {(11)};
    \draw[->,very thick] (2.3,0) -- (3.0,0) node[right] {};
    
    \node[inner sep=0pt] at (5.7,0) {\includegraphics[width = 0.35 \textwidth]{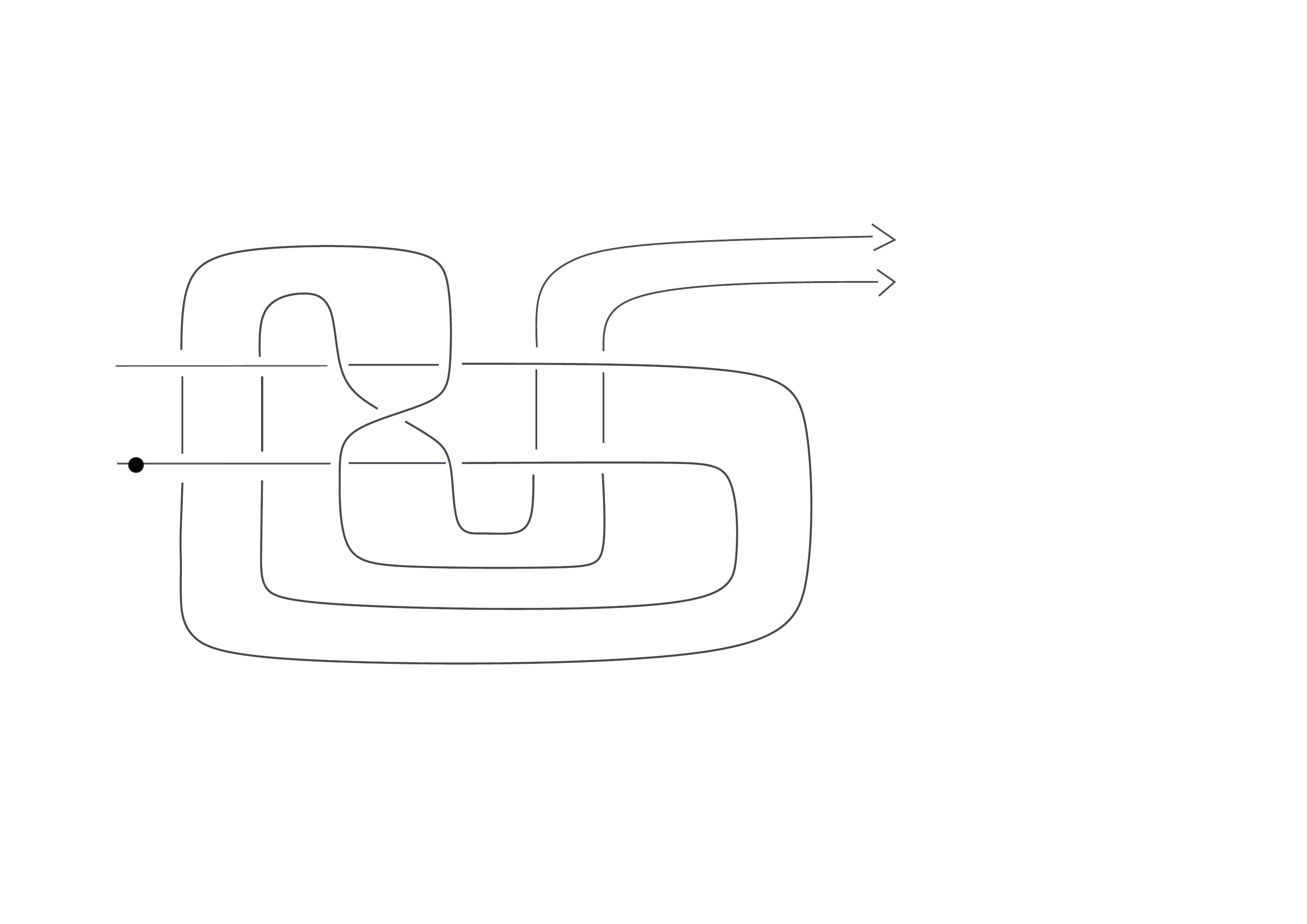}};
    \draw(5.3,-1.3) node {(12)};
    \draw[->,very thick] (7.1,0) -- (7.7,0) node[right] {};
    \node[inner sep=0pt] at (-8,-3) {\includegraphics[width = 0.35 \textwidth]{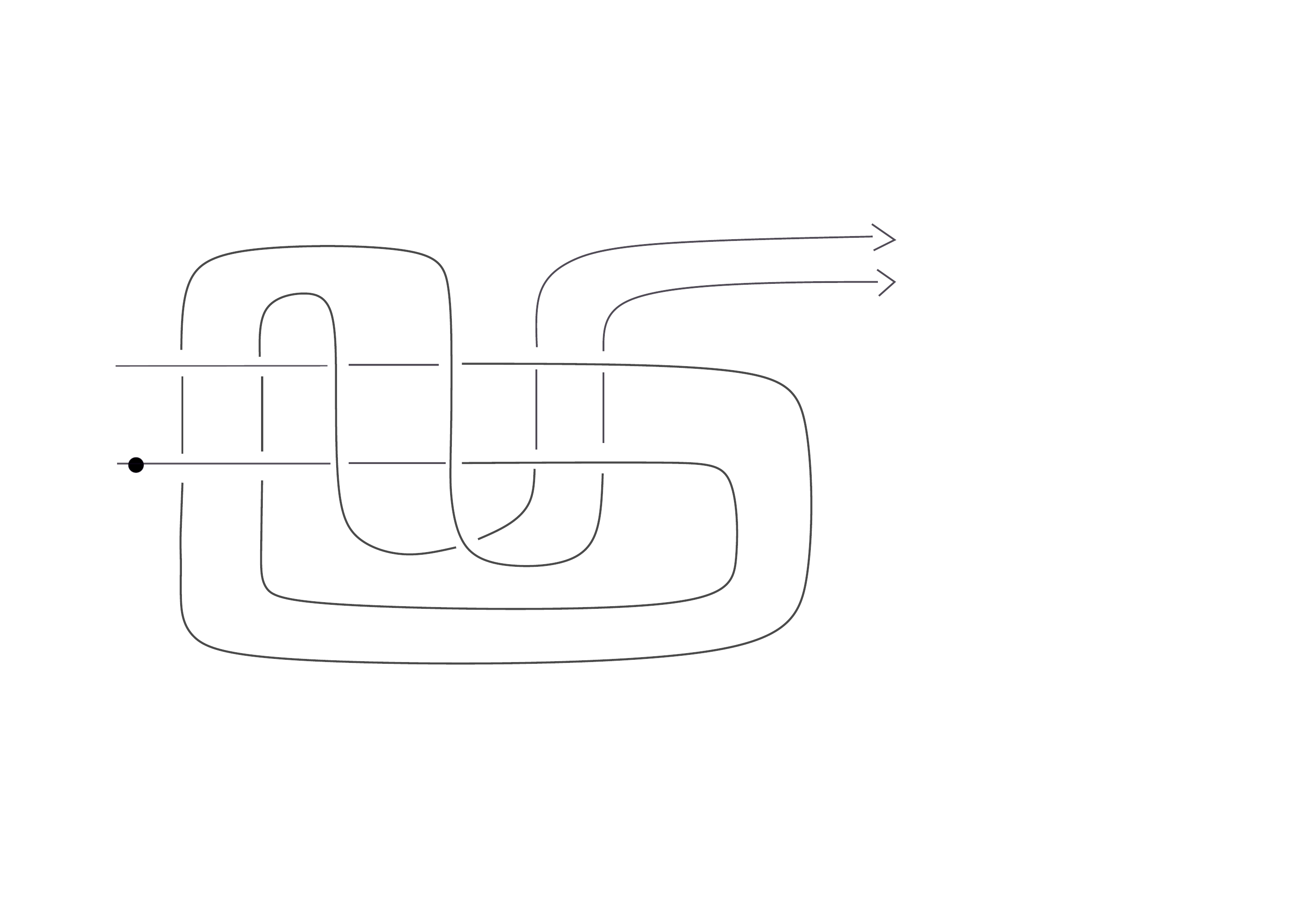}};
    \draw(-8.4,-4.3) node {(13)};
    \draw[->,very thick] (-6.8,-3) -- (-6.1,-3) node[right] {};
    
    \node[inner sep=0pt] at (-3.6,-3) {\includegraphics[width = 0.35 \textwidth]{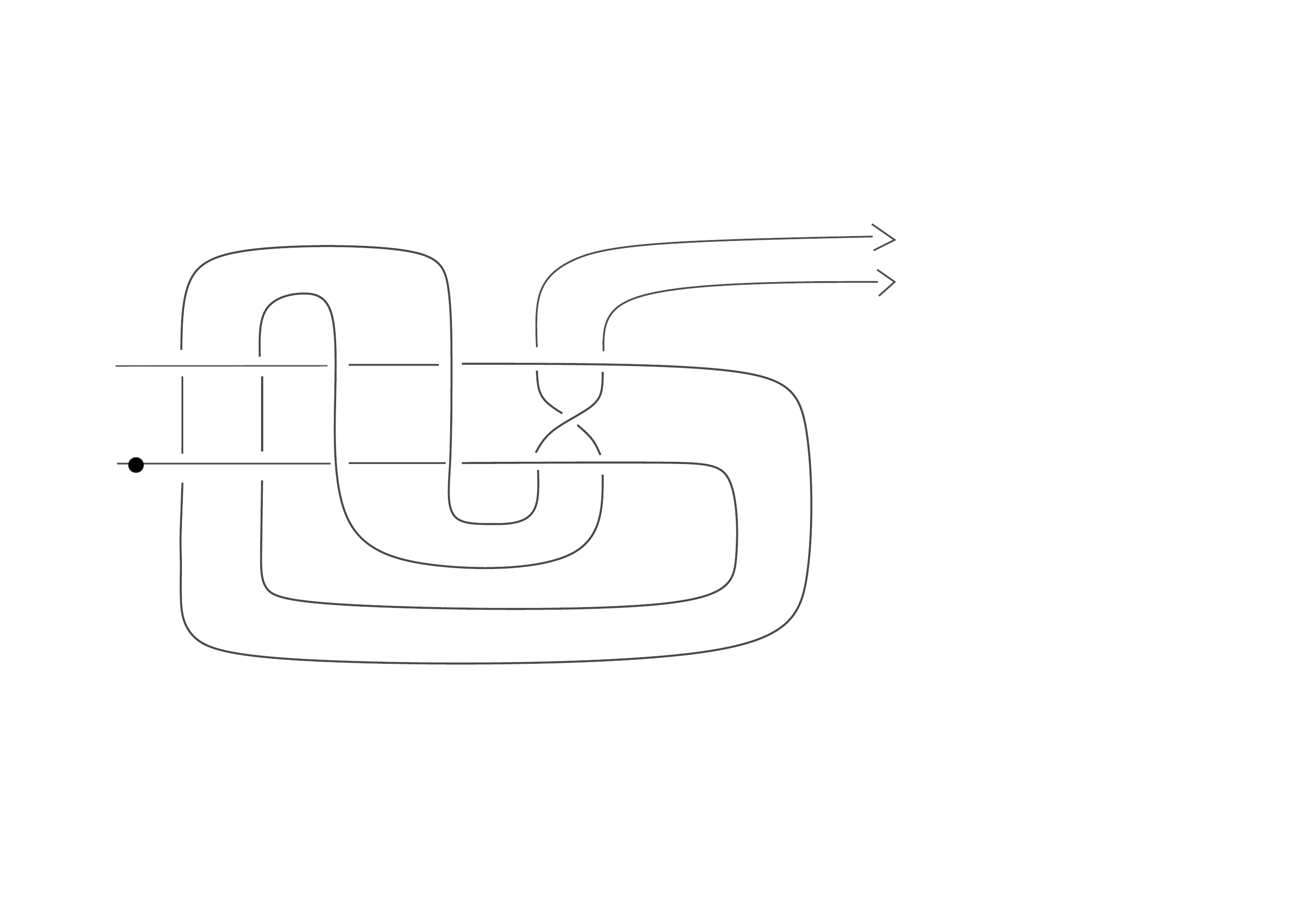}};
    \draw(-3.9,-4.3) node {(14)};
    \draw[->,very thick] (-2.3,-3) -- (-1.6,-3) node[right] {};
    
    \node[inner sep=0pt] at (1,-3) {\includegraphics[width = 0.35 \textwidth]{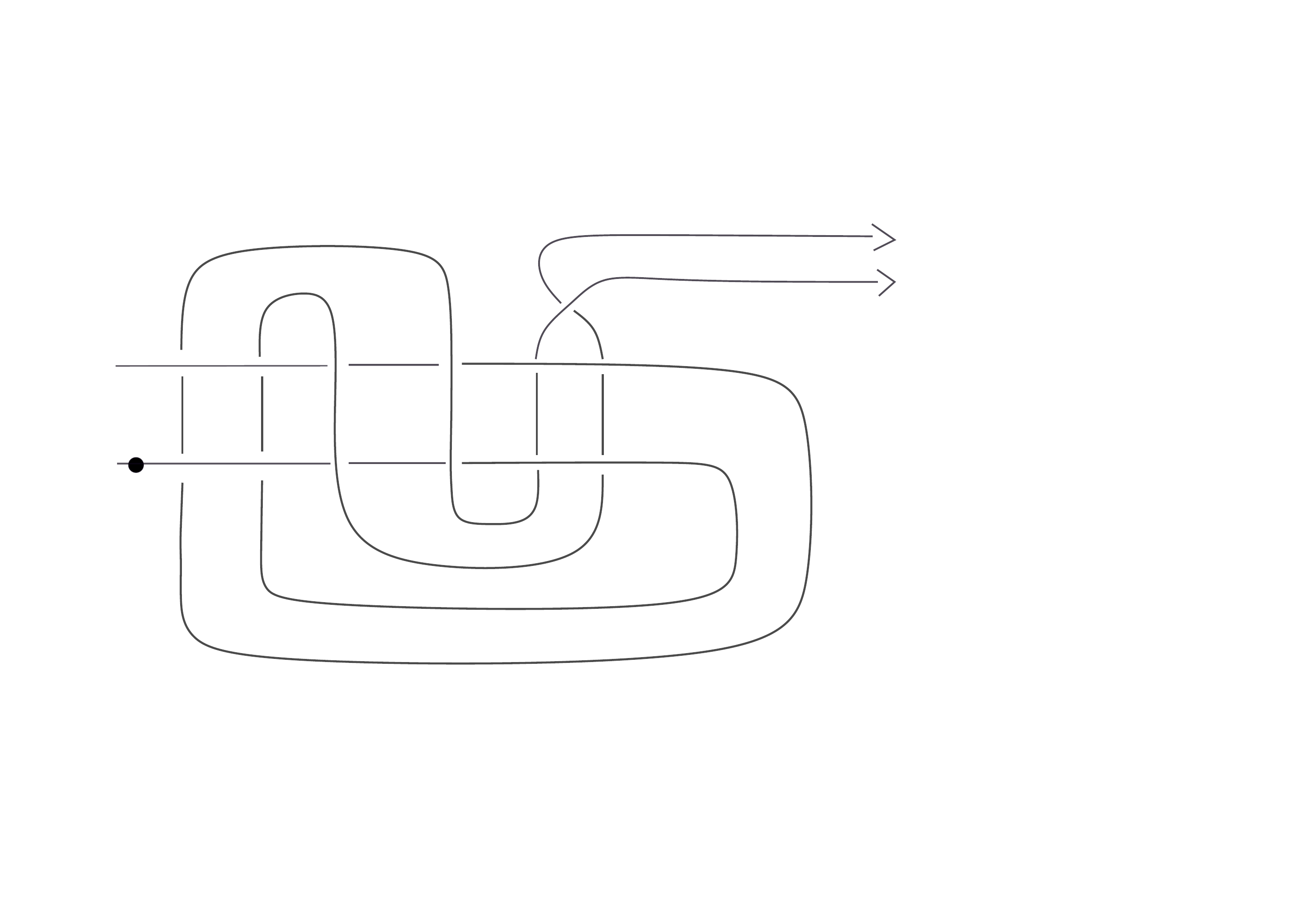}};
    \draw(0.6,-4.3) node {(15)};
    \draw[->,very thick] (2.3,-3) -- (3.0,-3) node[right] {};
    
    \node[inner sep=0pt] at (5,-3) {\includegraphics[width = 0.28 \textwidth]{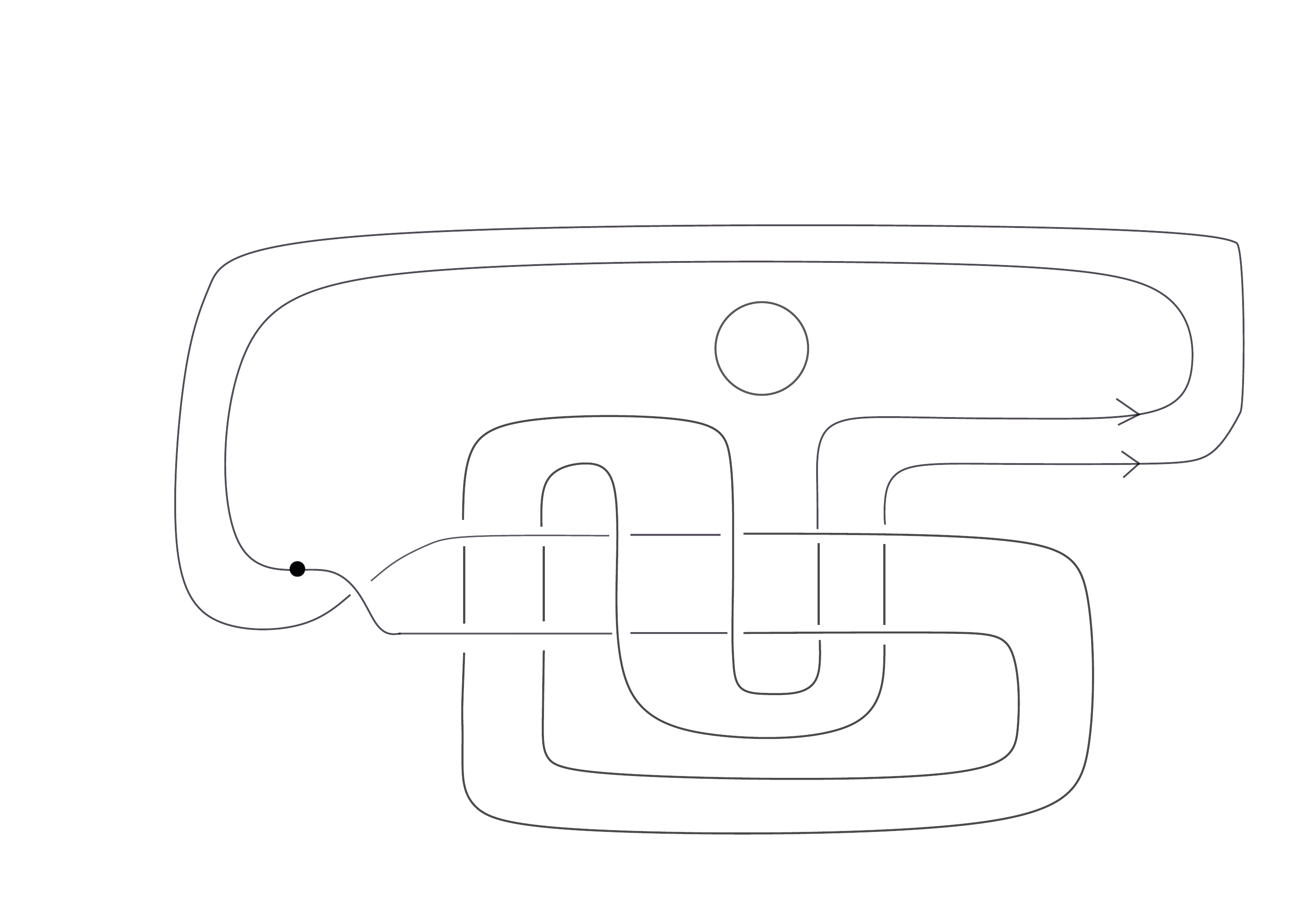}};
    \draw(5.5,-4.9) node {(16)};
    
    \draw[->,very thick] (-6.8,-6) -- (-6.1,-6) node[right] {};
    \draw(-5,-6) node {...};
    
    \draw[->,very thick] (-3.8,-6) -- (-3.1,-6) node[right] {};
    \node[inner sep=0pt] at (0,-6) {\includegraphics[width = 0.35 \textwidth]{2tre_1.pdf}};
    \draw(-0.3,-7.1) node {(1)};
    
  \end{tikzpicture}
  \caption{push loop}
  \label{Fig.push}
\end{figure}
\\
\\

\eject \pdfpagewidth=14in \pdfpageheight=7in
\clearpage

\end{document}